\documentclass[11pt]{article}
    \usepackage{url}
    \usepackage{verbatim}
    \usepackage[titletoc]{appendix}
    \usepackage{graphicx}
    \textwidth=6.5in
    \textheight=9.00in
    \footskip=0.5in
    \oddsidemargin=0in
    \topmargin=-0.5in

	\usepackage{amsmath}
	\usepackage{amsthm}
	\usepackage{amsfonts}
	\usepackage{graphicx}
    \usepackage{nicefrac}
    \usepackage{longtable}
    \usepackage{color}
    \usepackage{graphicx, amssymb, subcaption,graphics}

\newtheorem{rem}{Remark}[section]
\newtheorem{thm}{Theorem}[section]

\newtheorem{lem}{Lemma}[section]

\newtheorem{prop}{Proposition}[section]

\newcommand{\nabh}{\nabla_{\! h}}

\newcommand{\hf}{\nicefrac{1}{2}}
\newcommand{\nrm}[1]{\left\| #1 \right\|}
\newcommand{\ciptwo}[2]{\left\langle #1 , #2 \right\rangle}

\newcommand\dt {{\Delta t}}

\newcommand{\eipx}[2]{\left[ #1 , #2 \right]_{\rm x}}
\newcommand{\eipy}[2]{\left[ #1 , #2 \right]_{\rm y}}
\newcommand{\eipz}[2]{\left[ #1 , #2 \right]_{\rm z}}
\newcommand{\eipvec}[2]{\left[ #1 , #2 \right]}

\newcommand{\ve}{\varepsilon}          

	\newcommand\be {\begin{equation}}
	\newcommand\ee {\end{equation}}

\newcommand{\hh}{\mbox{\boldmath$h$}}
\graphicspath{{Figures/}}
	\title{A second order accurate numerical method for the Poisson-Nernst-Planck system in the energetic variational formulation}

	\date{\today}

	\begin{document}
	
	\author{
Chun Liu \thanks{Department of Applied Mathematics, Illinois Institute of Technology, IL 60616, USA (cliu124@iit.edu)}
\and	
Cheng Wang\thanks{Department of Mathematics, University of Massachusetts, North Dartmouth, MA  02747, USA (cwang1@umassd.edu)}
	\and
Steven M. Wise\thanks{Department of Mathematics, The University of Tennessee, Knoxville, TN 37996, USA (swise1@utk.edu)} 
\and	
Xingye Yue\thanks{Department of Mathematics, Soochow University, Suzhou 215006, P.R. China (xyyue@suda.edu.cn)}
\and	
Shenggao Zhou\thanks{School of Mathematical Sciences, MOE-LSC, and CMAI-Shanghai, Shanghai Jiao Tong University, Shanghai, China. Corresponding Author, Email: sgzhou@sjtu.edu.cn.}
}

 	\maketitle
	\numberwithin{equation}{section}
	
\begin{abstract}	 
A second order accurate (in time) numerical scheme is proposed and analyzed for the Poisson-Nernst-Planck equation (PNP) system, reformulated as a non-constant mobility $H^{-1}$ gradient flow in the Energetic Variational Approach (EnVarA). The centered finite difference is taken as the spatial discretization. Meanwhile, the highly nonlinear and singular nature of the logarithmic energy potentials has always been the essential difficulty to design a second order accurate scheme in time, while preserving the variational energetic structures. In the proposed numerical scheme, the mobility function is updated with a second order accurate extrapolation formula, for the sake of unique solvability. Moreover, a direct application of Crank-Nicolson approximation to the chemical potentials would face a serious difficulty to theoretically justify the energy stability. Instead, we use a modified Crank-Nicolson approximation to the logarithmic term, so that its inner product with the discrete temporal derivative exactly gives the corresponding nonlinear energy difference; henceforth the energy stability is ensured for the logarithmic part. In addition, nonlinear artificial regularization terms, in the form of $\dt ( \ln n^{m+1}  - \ln n^m )$ and $\dt ( \ln p^{m+1}  - \ln p^m )$, are added in the numerical scheme, so that the positivity-preserving property could be theoretically proved, with the help of the singularity associated with the logarithmic function. Furthermore, an optimal rate convergence analysis is provided in this paper, in which the higher order asymptotic expansion for the numerical solution, the rough error estimate and refined error estimate techniques have to be included to accomplish such an analysis. This work combines the following theoretical properties for a second order accurate numerical scheme for the PNP system: (i) second order accuracy in both time and space, (ii) unique solvability and positivity, (iii) energy stability, and (iv) optimal rate convergence. A few numerical results are also presented. 

	\bigskip

\noindent
{\bf Key words and phrases}:
Poisson-Nernst-Planck (PNP) system, second order accuracy, positivity preserving, energy stability, optimal rate convergence analysis, rough error estimate and refined estimate  

\noindent
{\bf AMS subject classification}: \, 35K35, 35K55, 65M06, 65M12	
\end{abstract}
	


\section{Introduction} 

The Poisson-Nernst-Planck (PNP) system for the charge dynamics of $z_0: z_0$ electrolytes is formulated as   
	\begin{eqnarray} 
\partial_t n &=&  D_n \Delta n - \frac{z_0 e_0}{k_B \theta_0} \nabla \cdot \left( D_n n \nabla \phi  \right)   ,   
	\label{equation-PNP-1} 
	\\
\partial_t p &=&   D_p \Delta p + \frac{z_0 e_0}{k_B \theta_0} \nabla \cdot \left( D_p p \nabla \phi  \right)   ,   
	\label{equation-PNP-2}  
	\\
 - \varepsilon \Delta \phi &=& z_0 e_0 (p-n) + \rho^f , 
 	\label{equation-PNP-3} 
	\end{eqnarray}
where $n$ and $p$ are the concentrations of negatively and positively charged ions, and $\phi$ is the electric potential. In this model, $k_B$, $\theta_0$, $\varepsilon$, $z_0$, $e_0$ stand for the Boltzmann constant, the absolute temperature, the dielectric coefficient, valence of ions, the charge of an electron, respectively; the parameters $D_n$ and $D_p$ are diffusion/mobility coefficients. The periodic boundary conditions are assumed in this paper, for simplicity of presentation, though the presented analysis could be extended to more complicated, more physical boundary conditions, such as the homogeneous Neumann one. Furthermore, the source term $\rho^f$ is assumed to vanish everywhere, i.e., $\rho^f \equiv 0$. An extension to a non-homogeneous source term is straightforward. See the related works~\cite{Bazant04, Ben02, Eisenberg96, Flavell14, Gavish16, HuHuang_NM20, Hunter01, jerome95, Lyklema95, Markowich86, Markowich90, Nazarov07, Nonner99, promislow01, Shen2021} for more detailed descriptions of this physical model. 

In particular, the Energetic Variational Approach (EnVarA)~\cite{Eisenberg10} for the PNP system has attracted more and more attentions, since the PDE system is formulated as a gradient flow with respect to a certain free energy. This framework has provided great convenience in the structure preserving analysis, at both the PDE and numerical levels. In fact, the dimensionless dynamical equations of the PNP system could be rewritten as (see the detailed derivation in~\cite{LiuC2021}) 
		\begin{eqnarray} 
\partial_t n &=&   \nabla\cdot\left( \nabla n - n \nabla \phi  \right)   ,   
	\label{equation-PNP-1-nd} 
	\\
\partial_t p &=&   D \nabla\cdot\left( \nabla p +  p \nabla \phi  \right)   ,   
	\label{equation-PNP-2-nd}  
	\\
 -  \Delta \phi &=&  p-n  .
 	\label{equation-PNP-3-nd} 
	\end{eqnarray}	
The corresponding dimensionless energy is given by 
	\begin{equation}
E (n, p) = \int_\Omega\left\{   n \ln n + p \ln p \right\}d{\bf x} 
+ \frac12 \| n - p \|_{H^{-1}}^2,
	\label{PNP energy-nd}
	\end{equation} 	
under the assumption that $n-p$ is of mean zero, and the $H^{-1}$ norm is defined via
	\[
\nrm{f}_{H^{-1}} : = 	\sqrt{\left(f,f\right)_{H^{-1}}} ,  \quad 
 ( f ,g )_{H^{-1}} = ( f , (-\Delta)^{-1} g ) = ( (-\Delta)^{-1} f ,g ) ,  \, \, \, 
 \mbox{for $f$ and $g$ with mean zero} . 
	\]
In turn, the PNP system~\eqref{equation-PNP-1-nd} -- \eqref{equation-PNP-3-nd} as the following conserved $H^{-1}$ gradient flow, with non-constant mobility function: 
	\begin{align} 
\partial_t n  = & \nabla \cdot \left( n \nabla \mu_n \right),  \quad  
\partial_t p = D\nabla \cdot \left(  p \nabla \mu_p \right) , 
   \label{equation-PNP-0-nd} 
\\
\mu_n & := \delta_n E = \ln n + 1 + (-\Delta)^{-1} ( n - p)   =  \ln  n + 1  -  \phi ,        
	\label{PNP-chem pot-n-nd} 
	\\
\mu_p &:= \delta_p E =   \ln p +1 + (-\Delta)^{-1} ( p - n)  =   \ln p +1 +  \phi ,       
	\label{PNP-chem pot-p-nd}
	\end{align} 
where the electric potential is defined as $\phi = (-\Delta)^{-1} (p -n)$. A careful calculation implies that the energy dissipation law becomes $d_t E = -\int_\Omega\left\{n \, \left|\nabla\mu_n \right|^2+ D\, p \,  \left|\nabla\mu_p \right|^2 \right\} d{\bf x} \le 0$.

There have been extensive numerical works for the PNP system, while most existing works have focused on first-order-accurate (in time) algorithms. Second order and even higher order numerical schemes turn out to be a very important subject, due to their ability to capture more refined structures in long-time simulation. On the other hand, a theoretical analysis for the second order numerical scheme has always been very challenging, in particular in terms of the positivity-preserving analysis (the PDE solution preserves this property at a point-wise level, in the sense that $n, p >0$), the energy stability estimate, as well as the optimal rate convergence analysis. In particular, the highly nonlinear and singular nature of the logarithmic energy potentials has always been the essential difficulty to design a second order accurate scheme that is able to preserve the variational energetic structures. Some existing works have reported one or two theoretical properties in the structure-preserving for the corresponding numerical method, while no existing second order numerical scheme has been proved to satisfy all three theoretical properties. 

In this paper, we propose and analyze a second order accurate numerical scheme, which preserves all three important theoretical features. First of all, the numerical scheme has to be based on the variational structure of the PNP system, so that a theoretical justification of energy stability is hopeful. In the temporal discretization, a modified Crank-Nicolson approximation is taken, so that all the terms in the PDE system are evaluated at the mid-point time instant $t^{m+\hf}$. The mobility function is explicitly updated in the scheme, with an explicit second order extrapolation formula. The advantage of this choice is to enforce the strictly elliptic nature of the operator associated with the temporal derivative part in the $H^{-1}$ gradient flow, so that the unique solvability analysis could go through. Meanwhile, a direct application of Crank-Nicolson approximation to the logarithmic terms (for $n$ and $p$) would lead to a serious difficulty in theoretical justification of the energy stability. Instead, a modified Crank-Nicolson approximation to the logarithmic term is proposed, in the form of $\frac{F (u^{m+1}) - F (u^m) }{u^{m+1} - u^m}$ (with $u$ either the component $n$ or $p$). This approximation may seem singular at the first glance, as $u^{m+1} \to u^m$, while a more careful observation reveals its analytic property as $u^{m+1} - u^m \to 0$, and a singularity will not appear even if $u^{m+1} = u^m$. Furthermore, the advantage of this approximation is associated with the fact that, its inner product with the discrete temporal derivative exactly gives the corresponding nonlinear energy difference; henceforth the energy stability is ensured for the logarithmic part. In addition, nonlinear artificial regularization terms, in the form of $\dt ( \ln n^{m+1}  - \ln n^m )$, $\dt ( \ln p^{m+1}  - \ln p^m )$, are added in the numerical scheme, so that the positivity-preserving property could be theoretically proved, with the help of the singularity associated with the logarithmic function; also see the related works~\cite{chen19b, dong19b, dong20a} for the Cahn-Hilliard model with Flory-Huggins energy potential. 

Moreover, an optimal rate convergence analysis turns out to be another challenging issue for the PNP system, due to its non-constant mobility nature, as well as the highly nonlinear and singular properties of the logarithmic terms. The only existing convergence analysis work for a second order numerical scheme could be found in~\cite{ding19}, in which the estimate has been based on the perfect Laplacian operator structure for $n$ and $p$, instead of the $H^{-1}$ gradient flow structure. To overcome this subtle difficulty, many highly non-standard techniques have to be introduced, due to the nonlinear parabolic coefficients. The higher order asymptotic analysis of the numerical solution, up to the fourth order temporal accuracy and spatial accuracy, has to be performed with a careful linearization expansion. Such a higher order asymptotic expansion enables one to obtain a rough error estimate, so that to the $L_h^\infty$ bound for $n$ and $p$ could be derived, as well as their discrete temporal derivatives. With these bounds at hand, the corresponding inner product between the discrete temporal derivative of the numerical error function and the numerical error associated with the chemical potential becomes a discrete derivative of certain nonlinear, non-negative functional in terms of the numerical error functions, combined with some numerical perturbation terms. As a result, all the key difficulties in the nonlinear analysis of the second order scheme will be overcome, and the discrete Gronwall inequality could be applied to obtain the desired result of optimal rate convergence analysis. To our knowledge, this scheme will be the first work to combine three theoretical properties for any second order numerical scheme for the PNP system: unique solvability/positivity-preserving, energy stability, and optimal rate convergence analysis.

The rest of the article is organized as follows. In Section~\ref{sec:numerical scheme} we propose the fully discrete numerical scheme. The detailed proof for the positivity-preserving property of the numerical solution, as well as the energy stability analysis, are provided in Section~\ref{sec:positivity}. 
The optimal rate convergence analysis is presented in Section~\ref{sec:convergence}.  Some numerical results are provided in Section~\ref{sec:numerical results}. Finally, the concluding remarks are given in Section~\ref{sec:conclusion}.

	\section{The second order accurate numerical scheme}
	\label{sec:numerical scheme}

	\subsection{The finite difference spatial discretization}
	\label{subsec:finite difference}

The standard centered finite difference spatial approximation is applied. We present the numerical approximation on the computational domain $\Omega = (0,1)^3$ with a periodic boundary condition, and $\Delta x = \Delta y = \Delta z = h = \frac{1}{N}$ with $N \in\mathbb{N}$ to be the spatial mesh resolution throughout this work. In particular, $f_{i,j,k}$ stands for the numerical value of $f$ at the cell centered mesh points $( ( i + \frac12 ) h, (j + \frac12 ) h, ( k + \frac12 ) h )$, and we denote ${\mathcal C}_{\rm per}$ as   
	\begin{align}
{\mathcal C}_{\rm per} &:= \left\{ (f_{i,j,k} ) \middle|  f_{i,j,k} = f_{i+\alpha N,j+\beta N, k+\gamma N}, \ \forall \, i,j,k,\alpha,\beta,\gamma\in \mathbb{Z} \right\},  \nonumber 
	\end{align}
with the discrete periodic boundary condition imposed. In turn, the discrete average and difference operators are evaluated at $(i + \hf, j,k)$, $(i, j+\hf, k)$ and $(i,j,k+\hf)$, respectively: 
	\begin{eqnarray*}
&& A_x f_{i+\hf,j,k} := \frac{1}{2}\left(f_{i+1,j,k} + f_{i,j,k} \right), \quad D_x f_{i+\hf,j,k} := \frac{1}{h}\left(f_{i+1,j,k} - f_{i,j,k} \right),
	\\
&& A_y f_{i,j+\hf,k} := \frac{1}{2}\left(f_{i,j+1,k} + f_{i,j,k} \right), \quad D_y f_{i,j+\hf,k} := \frac{1}{h}\left(f_{i,j+1,k} - f_{i,j,k} \right) , 
	\\
&& A_z f_{i,j,k+\hf} := \frac{1}{2}\left(f_{i,j,k+1} + f_{i,j,k} \right), \quad D_z f_{i,j,k+\hf} := \frac{1}{h}\left(f_{i,j,k+1} - f_{i,j,k} \right) .  
	\end{eqnarray*} 
Conversely, the corresponding operators at the staggered mesh points are defined as follows: 
	\begin{eqnarray*}
&& a_x f^x_{i, j, k} := \frac{1}{2}\left(f^x_{i+\hf, j, k} + f^x_{i-\hf, j, k} \right),	 \quad d_x f^x_{i, j, k} := \frac{1}{h}\left(f^x_{i+\hf, j, k} - f^x_{i-\hf, j, k} \right),
	\\
&& a_y f^y_{i,j, k} := \frac{1}{2}\left(f^y_{i,j+\hf, k} + f^y_{i,j-\hf, k} \right),	 \quad d_y f^y_{i,j, k} := \frac{1}{h}\left(f^y_{i,j+\hf, k} - f^y_{i,j-\hf, k} \right),
	\\
&& a_z f^z_{i,j,k} := \frac{1}{2}\left(f^z_{i, j,k+\hf} + f^z_{i, j, k-\hf} \right),	 \quad d_z f^z_{i,j, k} := \frac{1}{h}\left(f^z_{i, j,k+\hf} - f^z_{i, j,k-\hf} \right) . 
	\end{eqnarray*}
In turn, for a scalar cell-centered function $g$ and a vector function $\vec{f} = ( f^x , f^y , f^z)^T$, with $f^x$, $f^y$ and $f^z$ evaluated at $(i+\hf, j, k)$, $(i, j+\hf, k)$, $(i,j, k+\hf)$, respectively, the discrete divergence is defined as 
\begin{equation} 
\nabla_h\cdot \big( g \vec{f} \big)_{i,j,k} = d_x\left( A_x g \cdot f^x\right)_{i,j,k}  + d_y\left( A_y g \cdot f^y\right)_{i,j,k} + d_z\left( A_z g \cdot f^z\right)_{i,j,k} .  \label{divergence-1} 
\end{equation} 
In particular, if $\vec{f} = \nabla_h \phi = ( D_x \phi , D_y \phi, D_z \phi)^T$ for certain scalar grid function $\phi$, the corresponding divergence becomes 
\begin{align} 
   \nabla_h\cdot \big( g \nabla_h \phi \big)_{i,j,k} = & d_x\left( A_x g \cdot D_x \phi \right)_{i,j,k}  + d_y\left( A_y g \cdot D_y \phi \right)_{i,j,k} + d_z\left( A_z g \cdot D_z \phi \right)_{i,j,k} ,   \label{divergence-2}  
\\
  ( \Delta_h \phi )_{i,j,k} = & \nabla_h\cdot \big( \nabla_h \phi \big)_{i,j,k} = d_x\left( D_x \phi \right)_{i,j,k}  + d_y\left( D_y \phi \right)_{i,j,k} + d_z\left( D_z \phi \right)_{i,j,k} .    \label{divergence-3}  
\end{align}

The discrete $L_h^2$ inner product associated norm are defined as 
	\begin{equation*}
\langle f , g \rangle_\Omega := h^3 \sum_{i,j,k=1}^N \, f_{i,j,k} g_{i,j,k} , \quad  \| f \|_2 := ( \langle f , f \rangle_\Omega )^\frac12 , \quad \forall \, f,g\in \mathcal{C}_{\rm per}.
 \end{equation*} 
The mean zero space is introduced as $\mathring{\mathcal C}_{\rm per}:=\left\{ f \in {\mathcal C}_{\rm per} \ \middle| 0 = \overline{f} :=  \frac{h^3}{| \Omega|} \sum_{i,j,k=1}^m f_{i,j,k} \right\}$. Similarly, for two vector grid functions $\vec{f} = ( f^x , f^y , f^z )^T$, $\vec{g} = ( g^x , g^y , g^z )^T$ --- with $f^x$ ($g^x$), $f^y$ ($g^y$), $f^z$ ($g^z$) defined at the edge grid points $(i+\hf, j, k)$, $(i, j+\hf, k)$, $(i,j, k+\hf)$, respectively --- the corresponding discrete inner product is defined as
	\[
   \eipvec{\vec{f} }{\vec{g} } : = \eipx{f^x}{g^x}	+ \eipy{g^y}{g^y} + \eipz{f^z}{g^z}, 	
	\]
where
	\[
\eipx{f^x}{g^x} := \ciptwo{a_x (f^x g^x)}{1} , \  \eipy{f^y}{g^y} := \ciptwo{a_y (f^y g^y)}{1} , \ \eipz{f^z}{g^z} := \ciptwo{a_z (f^z g^z) }{1} . 
	\]
We say such functions are in $\vec{\mathcal{E}}$, and in $\vec{\mathcal{E}}_{\rm per}$, if periodic boundary conditions are enforced. In addition to the discrete $\| \cdot \|_2$ norm, the discrete maximum ($L_h^\infty$) norm  is defined as  $\nrm{f}_\infty := \max_{1\le i,j,k\le N}\left| f_{i,j,k}\right|$. 
Moreover, the discrete $H_h^1$ and $H_h^2$ norms are introduced as 
\begin{eqnarray*} 
  && 
\nrm{ \nabla_h f}_2^2 : = \eipvec{\nabh f }{ \nabh f } = \eipx{D_x f}{D_x f} + \eipy{D_y f}{D_y f} +\eipz{D_z f}{D_z f},
\\
  &&
  \nrm{f}_{H_h^1}^2 : =  \nrm{f}_2^2+ \nrm{ \nabla_h f}_2^2 ,  \quad 
  \| f \|_{H_h^2}^2 :=  \| f \|_{H_h^1}^2 + \| \Delta_h f \|_2^2 .  
	\end{eqnarray*}
Summation by parts formulas are recalled in the following lemma; the detailed proof could be found in~\cite{guo16, wang11a, wise10, wise09a}, \emph{et cetera}. 

	\begin{lem}  
	\label{lemma1} \cite{guo16, wang11a, wise10, wise09a}	
For any $\psi, \phi, g \in {\mathcal C}_{\rm per}$, and any $\vec{f}\in\mathcal{E}_{\rm per}$, the following summation by parts formulae are valid: 
	\begin{equation}
\ciptwo{\psi}{\nabla_h\cdot\vec{f}} = - \eipvec{\nabla_h \psi}{ \vec{f}}, \quad \ciptwo{\psi}{\nabla_h\cdot \left( g \nabla_h \phi \right)} = - \eipvec{\nabla_h \psi }{ g \nabla_h\phi} , 
	\label{lemma 1-0} 
	\end{equation}
where $g \nabla_h\phi\in\mathcal{E}_{\rm per}$ is defined via
	\[
\left[ g \nabla_h\phi\right]_x = A_xgD_x\phi, \quad 	\left[ g \nabla_h\phi\right]_y = A_ygD_y\phi, \quad \left[ g \nabla_h\phi\right]_z = A_zgD_z\phi, \quad 
	\] 
	\end{lem}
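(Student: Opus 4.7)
The plan is to reduce both identities to three one-dimensional discrete integration-by-parts formulas, one per coordinate direction, and then to derive the second identity from the first by direct substitution. The essential analytical ingredient is the periodic boundary condition encoded in $\mathcal{C}_{\rm per}$ and $\mathcal{E}_{\rm per}$, which allows summation indices to be shifted without producing boundary remainders.

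For the first identity, I would expand the divergence using $\nabla_h \cdot \vec{f} = d_x f^x + d_y f^y + d_z f^z$ (the $g \equiv 1$ case of \eqref{divergence-1}), so that by linearity of the $L_h^2$ inner product it suffices to establish $\ciptwo{\psi}{d_x f^x} = -\eipx{D_x \psi}{f^x}$ together with the analogous $y$- and $z$-direction identities. For the $x$-component, unfolding the inner product gives
\begin{equation*}
\ciptwo{\psi}{d_x f^x} = h^2 \sum_{i,j,k=1}^N \psi_{i,j,k}\bigl( f^x_{i+\hf,j,k} - f^x_{i-\hf,j,k}\bigr),
\end{equation*}
and splitting this into two sums and applying the periodic index shift $i \mapsto i+1$ in the second sum yields $-h^3 \sum_{i,j,k} (D_x \psi)_{i+\hf,j,k} f^x_{i+\hf,j,k}$. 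This sum coincides with $-\eipx{D_x \psi}{f^x}$ once the $a_x$ averaging in that bracket's definition is unfolded and periodicity is used once more to show that the two terms making up $a_x(D_x\psi \cdot f^x)$ contribute equally. The $y$- and $z$-direction arguments are identical in structure, and summing the three identities proves the first assertion.

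For the second identity, I would simply observe that the vector field $g \nabla_h \phi$, whose components are $A_x g \cdot D_x \phi$, $A_y g \cdot D_y \phi$, $A_z g \cdot D_z \phi$, lies in $\mathcal{E}_{\rm per}$, since averages and differences of periodic grid functions remain periodic. Applying the first identity to this specific choice of $\vec{f}$, together with definition \eqref{divergence-2}, yields the second identity at once.

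The only care required is in bookkeeping the half-integer indices and checking that each reindexing step is legitimated by the periodicity assumption; there is no substantive obstacle. This lemma is a standard piece of discrete calculus, recorded here precisely so it can be invoked freely in the subsequent energy-stability and convergence analyses.
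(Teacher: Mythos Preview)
Your proof is correct and is precisely the standard coordinate-wise summation-by-parts argument. The paper does not supply its own proof of this lemma but simply cites \cite{guo16, wang11a, wise10, wise09a}; your sketch is the argument one finds in those references.
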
 

For any $\varphi  \in\mathring{\mathcal C}_{\rm per}$ and a positive (at a point-wise level) grid function $g$, the weighed discrete norm is defined as 
	\begin{equation} 
\| \varphi  \|_{\mathcal{L}_{g }^{-1} } = \sqrt{ \langle \varphi ,  
 \mathcal{L}_{g }^{-1} (\varphi) \rangle } ,   
	\end{equation} 
where $\psi =  \mathcal{L}_{g}^{-1} (\varphi) \in\mathring{\mathcal C}_{\rm per}$ is the unique solution that solves
	\begin{equation}
\mathcal{L}_{g} (\psi):= - \nabla_h \cdot ( g \nabla_h \psi ) 
 = \varphi .
	\label{PNP-mobility-0} 
	\end{equation}
In the simplified case that $g \equiv 1$, it is obvious that $\mathcal{L}_{g } (\psi) = -\Delta_h\psi$, and the discrete $H_h^{-1}$ norm is introduced as $\| \varphi  \|_{-1,h} = \sqrt{ \langle \varphi ,  (-\Delta_h )^{-1} (\varphi) \rangle }$.

	\begin{lem}[\cite{chen19b}]
	\label{PNP-positivity-Lem-0}
Suppose that $\varphi^\star$, $\hat{\varphi} \in \mathcal{C}_{\rm per}$, with  $\varphi^\star - \hat{\varphi} \in \mathring{\mathcal{C}}_{\rm per}$, satisfy $0 < \hat{\varphi}_{i,j,k} , \varphi^\star_{i,j,k} \le M_h$, for all $1 \le i,j,k \le N$, where $M_h >0$ may depend on $h$. There is a constant $C_0>0$, which depends only depends upon $\Omega$, such that 
	\begin{equation} 
\|  ( - \Delta_h)^{-1} ( \hat{\varphi} - \varphi^\star ) \|_\infty \le C_0 M_h .
  	\label{PNP-Lem-0} 
	\end{equation}
	\end{lem}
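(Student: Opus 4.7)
Denote $\psi := (-\Delta_h)^{-1}(\hat\varphi - \varphi^\star)$, so that $\psi \in \mathring{\mathcal{C}}_{\rm per}$ and $-\Delta_h \psi = \hat\varphi - \varphi^\star$. The plan is to bound $\|\psi\|_\infty$ by first controlling $\|\psi\|_{H_h^2}$ in terms of $\|\hat\varphi - \varphi^\star\|_2$, and then invoking a discrete Sobolev embedding that is valid in three dimensions under the periodic setting.

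The first step is an elementary pointwise bound on the right-hand side. Since both $\hat\varphi_{i,j,k}$ and $\varphi^\star_{i,j,k}$ lie in the interval $(0,M_h]$, the pointwise difference satisfies $|\hat\varphi_{i,j,k} - \varphi^\star_{i,j,k}| \le M_h$, so
\[
\|\hat\varphi - \varphi^\star\|_2 \;\le\; |\Omega|^{1/2} \, \|\hat\varphi - \varphi^\star\|_\infty \;\le\; |\Omega|^{1/2} M_h .
\]

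The second step is a discrete elliptic regularity bound. Using Lemma~\ref{lemma1}, testing $-\Delta_h \psi = \hat\varphi - \varphi^\star$ against $\psi$ gives $\|\nabla_h \psi\|_2^2 \le \|\Delta_h \psi\|_2 \|\psi\|_2$. Combining with the mean-zero discrete Poincar\'e inequality $\|\psi\|_2 \le C \|\nabla_h \psi\|_2$ (valid because $\psi \in \mathring{\mathcal{C}}_{\rm per}$) yields $\|\psi\|_2 + \|\nabla_h \psi\|_2 \le C \|\Delta_h \psi\|_2$, and hence
\[
\|\psi\|_{H_h^2} \;\le\; C \|\Delta_h \psi\|_2 \;=\; C \|\hat\varphi - \varphi^\star\|_2 \;\le\; C M_h,
\]
with $C$ depending only on $\Omega$.

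The final step is the discrete Sobolev embedding $H_h^2 \hookrightarrow L_h^\infty$ in three dimensions under periodic boundary conditions, namely $\|\psi\|_\infty \le C \|\psi\|_{H_h^2}$, which is a standard estimate established in~\cite{guo16, wang11a, wise10, wise09a} via discrete Fourier analysis (or equivalently by uniform bounds on the discrete Green's function of $-\Delta_h$). Chaining these three inequalities yields $\|(-\Delta_h)^{-1}(\hat\varphi - \varphi^\star)\|_\infty \le C_0 M_h$ with $C_0$ depending only on $\Omega$. The main delicate point is the discrete embedding, since $H^2 \hookrightarrow L^\infty$ is borderline in three dimensions; however, for the mean-zero periodic case this follows directly from the spectral representation of $(-\Delta_h)^{-1}$ and is already catalogued in the cited references, so no new technical machinery is required here.
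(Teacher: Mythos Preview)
The paper does not supply its own proof of this lemma; it is quoted from~\cite{chen19b} and used as a black box. So there is no in-paper argument to compare against, and your write-up would serve as a self-contained justification.

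Your argument is correct in outline and in detail: the pointwise bound $|\hat\varphi-\varphi^\star|\le M_h$ gives an $L_h^2$ bound, discrete elliptic regularity for the periodic Laplacian upgrades this to an $H_h^2$ bound on $\psi$, and the discrete embedding $H_h^2\hookrightarrow L_h^\infty$ closes the estimate with a constant depending only on $\Omega$. One small correction: the Sobolev embedding $H^2\hookrightarrow L^\infty$ in three dimensions is \emph{not} borderline---the critical exponent is $d/2=3/2$, so $H^2$ embeds with room to spare, and the discrete analogue inherits this non-criticality. You should drop the ``borderline'' remark, since it invites unnecessary suspicion about whether the embedding constant blows up as $h\to 0$ (it does not). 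An alternative, and arguably more direct, route is to bound $\|(-\Delta_h)^{-1}f\|_\infty$ by $\|f\|_\infty$ times the uniform $\ell^1$ bound on the discrete periodic Green's function, which is finite in three dimensions because the continuous Green's function behaves like $|x|^{-1}$ near the singularity; this is likely closer to what is done in~\cite{chen19b}, but your $H_h^2$ route is equally valid and reuses machinery already stated in the paper.
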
 

	\begin{lem}[\cite{chen19b}]
	\label{PNP-mobility-positivity-Lem-0}  
Suppose that $\varphi_1$, $\varphi_2 \in \mathcal{C}_{\rm per}$, with  $\varphi_1 - \varphi_2\in \mathring{\mathcal{C}}_{\rm per}$. Assume that $\| \varphi_1 \|_\infty , \| \varphi_2\|_\infty \le M_h$, where $M_h>0$ may depend upon $h$, and $g\in\mathcal{C}_{\rm per}$ satisfies $g \ge g_0$ (at a point-wise level), for some constant $g_0>0$ that is independent of $h$. Then we have the following estimate: 
	\begin{equation} 
\nrm{ \mathcal{L}_{g}^{-1} (\varphi_1 - \varphi_2)}_\infty \le C_1 g_0^{-1} h^{-1/2} ,  
	\label{PNP-mobility-Lem-0} 
	\end{equation} 
where $C_1>0$ depends only upon $M_h$ and $\Omega$.
	\end{lem}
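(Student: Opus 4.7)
The plan is to establish an $H_h^1$ bound on $\psi := \mathcal{L}_{g}^{-1} ( \varphi_1 - \varphi_2 )$ through a standard energy estimate, then convert that bound to an $L_h^\infty$ bound via a discrete Sobolev embedding combined with a classical inverse inequality.

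By the definition \eqref{PNP-mobility-0}, the mean-zero grid function $\psi \in \mathring{\mathcal C}_{\rm per}$ satisfies $-\nabh \cdot ( g \nabh \psi ) = \varphi_1 - \varphi_2$. I would first take the discrete $L_h^2$ inner product of this equation with $\psi$. Applying the summation by parts formula from Lemma~\ref{lemma1} on the left-hand side, and using the pointwise lower bound $g \ge g_0$, which transfers to the edge averages $A_x g, A_y g, A_z g \ge g_0$, yields
\[
g_0 \nrm{\nabh \psi}_2^2 \;\le\; \eipvec{\nabh \psi}{g \nabh \psi} \;=\; \ciptwo{\psi}{\varphi_1 - \varphi_2}.
\]
For the right-hand side, the pointwise bounds on $\varphi_1$ and $\varphi_2$ give $\nrm{\varphi_1 - \varphi_2}_2 \le 2 M_h |\Omega|^{1/2}$, so that Cauchy-Schwarz combined with the discrete Poincar\'e inequality applied to the mean-zero function $\psi$ produces
\[
\ciptwo{\psi}{\varphi_1 - \varphi_2} \;\le\; 2 M_h |\Omega|^{1/2} \nrm{\psi}_2 \;\le\; C M_h \nrm{\nabh \psi}_2 .
\]
Combining these estimates gives $\nrm{\nabh \psi}_2 \le C M_h g_0^{-1}$, and invoking Poincar\'e once more yields $\nrm{\psi}_{H_h^1} \le C M_h g_0^{-1}$, where the constant $C$ depends only on $\Omega$.

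To upgrade from $H_h^1$ to $L_h^\infty$, the key ingredient is the three-dimensional discrete Sobolev embedding $\nrm{\psi}_6 \le C_S \nrm{\psi}_{H_h^1}$ for periodic grid functions, which can be justified by extending $\psi$ to a piecewise-linear function on a simplicial subdivision of the cells and applying the continuous embedding $H^1 \hookrightarrow L^6$ in three space dimensions. It then suffices to use the trivial pointwise estimate
\[
|\psi_{i,j,k}|^6 \;=\; h^{-3} \cdot h^3 |\psi_{i,j,k}|^6 \;\le\; h^{-3} \nrm{\psi}_6^6 ,
\]
which yields the inverse inequality $\nrm{\psi}_\infty \le h^{-1/2} \nrm{\psi}_6$. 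Chaining these bounds produces
\[
\nrm{\psi}_\infty \;\le\; h^{-1/2} \nrm{\psi}_6 \;\le\; C_S h^{-1/2} \nrm{\psi}_{H_h^1} \;\le\; C_1 M_h g_0^{-1} h^{-1/2} ,
\]
which is precisely the claimed estimate \eqref{PNP-mobility-Lem-0}.

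The main obstacle is the non-constant mobility $g$, which precludes any Fourier-based approach and forces reliance on the pointwise lower bound $g \ge g_0$; this is exactly what concentrates the entire $g$-dependence of the final constant into the prefactor $g_0^{-1}$. A secondary technicality is ensuring that the discrete Sobolev constant $C_S$ is uniform in $h$, which is standard for periodic meshes and available in the references already cited for the summation-by-parts lemma~\cite{guo16, wang11a, wise10, wise09a}. The $h^{-1/2}$ loss is sharp in three dimensions and originates entirely in the $L^6 \to L^\infty$ inverse step; a two-dimensional version of the argument would produce an $h^{-\epsilon}$ or purely logarithmic loss instead.
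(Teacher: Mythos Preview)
Your argument is correct. The paper does not supply its own proof of this lemma; it is quoted verbatim from~\cite{chen19b}, so there is no in-paper derivation to compare against. Your route---an energy estimate exploiting $g\ge g_0$ to obtain an $H_h^1$ bound of order $M_h g_0^{-1}$, followed by the discrete $H^1\hookrightarrow L^6$ Sobolev embedding and the elementary $L^6\to L^\infty$ inverse inequality $\nrm{\psi}_\infty\le h^{-1/2}\nrm{\psi}_6$---is exactly the natural one and matches the argument in the cited source. The dependence you extract, $C_1=C(\Omega)\,M_h$, is consistent with the stated conclusion that $C_1$ depends only on $M_h$ and $\Omega$.
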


	\subsection{The proposed second order numerical scheme} 
	
The point-wise mobility functions are given by $( {\cal M}^m_n )_{i,j,k}=   n_{i,j,k}^m$, $( {\cal M}^m_p )_{i,j,k}=  D p^m_{i,j,k}$. In turn, the following mobility function at the face-centered mesh points are introduced:  
	\begin{align} 
( \breve{\cal M}_n^{m+\hf} )_{i+\hf,j,k} &:= \Big( ( A_x ( \frac32 {\cal M}_n^m - \frac12 {\cal M}_n^{m-1} )_{i+\hf, j,k} )^2 + \dt^6 \Big)^{1/2}  , 
	\nonumber
	\\
( \breve{\cal M}_n^{m+\hf} )_{i,j+\hf,k} &:= \Big( ( A_y ( \frac32 {\cal M}_n^m - \frac12 {\cal M}_n^{m-1} )_{i, j+\hf ,k} )^2 + \dt^6 \Big)^{1/2}   , 
	\label{mob ave-1} 
	\\
( \breve{\cal M}_n^{m+\hf} )_{i,j,k+\hf} &:= \Big( ( A_z ( \frac32 {\cal M}_n^m - \frac12 {\cal M}_n^{m-1} )_{i, j,k+\hf} )^2 +  \dt^6 \Big)^{1/2} , 
	\nonumber
	\end{align} 
with similar definitions for $\breve{\cal M}_p^m$. Such a choice ensures the point-wise positivity of the numerical mobility functions, which will be useful in the unique solvability analysis presented in the next section. In turn, we propose the following second order accurate scheme: given $n^m, \, n^{m-1} , \, p^m, p^{m-1} \in {\mathcal C}_{\rm per}$, find  $n^{m+1},p^{m+1}\in {\mathcal C}_{\rm per}$ such that
	\begin{align} 
\frac{n^{m+1} - n^m}{\dt} & = \nabla_h \cdot \left( \breve{\cal M}_n^{m+\hf} \nabla_h \mu_n^{m+\hf}  \right) , 
 	\label{scheme-PNP-2nd-1} 
	\\
\frac{p^{m+1} - p^m}{\dt} & = \nabla_h \cdot \left( \breve{\cal M}_p^{m+\hf} \nabla_h \mu_p^{m+\hf}  \right) , 
 	\label{scheme-PNP-2nd-2}	
	\\
\mu_n^{m+\hf} & =  \frac{n^{m+1} \ln n^{m+1} - n^m \ln n^m}{n^{m+1} - n^m} -1 
 + \dt \ln \frac{n^{m+1}}{n^m} + (-\Delta_h)^{-1} ( n^{m+\hf} - p^{m+\hf} )  ,
	\label{scheme-PNP-2nd-chem pot-n} 
	\\
\mu_p^{m+\hf} & =  \frac{p^{m+1} \ln p^{m+1} - p^m \ln p^m}{p^{m+1} - p^m} -1 
+ \dt \ln \frac{p^{m+1}}{p^m}  + (-\Delta_h)^{-1} ( p^{m+\hf} - n^{m+\hf} )  ,  
	\label{scheme-PNP-2nd-chem pot-p} 
	\end{align}
where
	\[
n^{m+\hf}  = \frac12 ( n^{m+1} + n^m )  \quad  \mbox{and} \quad   
    p^{m+\hf} = \frac12 ( p^{m+1} + p^m )  .
    \]

	\section{Positivity-preserving analysis and energy stability estimate} 
	\label{sec:positivity} 
	
To facilitate the theoretical analysis, we define $F(x) = x \ln x$, and the following three smooth functions are introduced: 
\begin{equation} 
\begin{aligned} 
  & 
  G^1_a (x) := \frac{F (x) - F (a)}{x -a} , \quad x > 0 ,  \, \, \, \mbox{for a fixed $a >0$} , 
\\
  &
  G^0_a (x) := \int_a^x \, G^1_a (t) \, dt 
  =  \int_a^x \, \frac{F (t) - F (a)}{t -a}  \, dt ,  
\\
  & 
  G^2_a (x)  := (G^0_a )'' (x) = (G^1_a)' (x) 
  = \frac{F' (x) (x-a) - ( F (x) - F (a))}{(x -a)^2} .
\end{aligned} 
\label{defi-G-1} 
\end{equation} 

The following preliminary estimate will be used in the later analysis; its proof is based on direct calculations. The details are left to interested readers. 
      
	\begin{lem}
	\label{lem: G property} 
Suppose that $a>0$ is fixed. Then the following hold:
	\begin{enumerate}
	\item	
$G^2_a (x) \ge 0$, for any $x > 0$;
	\item
$G^0_a (x)$ is a convex function of $x$ in the domain $[0,\infty)$;
	\item
$(G_a^1)' (x) = \frac{1}{2 \xi}$, for some $\xi$ between $a$ and $x$; 
	\item
Since $G^1_a (x)$ is an increasing function of $x$, we have $G^1_a (x) \le G^1_a (a) = \ln a +1$, for any $0 < x \le a$. 
	\end{enumerate}
	\end{lem}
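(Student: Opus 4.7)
The plan is to first establish item 3 via a direct Taylor expansion, and then derive items 1, 2, and 4 as essentially free consequences of that formula. The key observation is that $F(x)=x\ln x$ is smooth and strictly convex on $(0,\infty)$, with $F'(x)=\ln x+1$ and $F''(x)=1/x$, so every claim in the lemma ultimately reduces to exploiting the sign and size of $F''$.

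To prove item 3, I would apply Taylor's theorem with Lagrange remainder to expand $F(a)$ about the point $x$:
\begin{equation*}
F(a) \; = \; F(x) + F'(x)(a-x) + \tfrac12 F''(\xi)\,(a-x)^2,
\end{equation*}
for some $\xi$ strictly between $a$ and $x$. Rearranging gives
\begin{equation*}
F'(x)(x-a) - \bigl(F(x) - F(a)\bigr) \; = \; \tfrac12 F''(\xi)\,(x-a)^2,
\end{equation*}
so dividing by $(x-a)^2$ and substituting $F''(\xi)=1/\xi$ yields $G^2_a(x) = \frac{1}{2\xi}$. Since by construction $(G^1_a)'(x) = G^2_a(x)$, item 3 follows directly.

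Items 1 and 2 are then essentially free. Positivity $\xi>0$ (both $a$ and $x$ are positive) gives item 1, and since $(G^0_a)''(x)=G^2_a(x)\ge 0$ on $(0,\infty)$, the function $G^0_a$ is convex on the open half line. To extend convexity to the closed half line $[0,\infty)$ as stated, I would separately verify continuity of $G^0_a$ at the endpoint: using $t\ln t\to 0$ as $t\to 0^+$, the integrand $G^1_a(t)$ extends continuously with limit $\ln a$ at $t=0$, so the integral defining $G^0_a(0)$ converges and convexity persists under the closure.

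For item 4, strict monotonicity of $G^1_a$ follows from item 1 via $(G^1_a)'=G^2_a>0$. The boundary value $G^1_a(a)=\ln a+1$ is obtained by recognizing the difference quotient $\frac{F(x)-F(a)}{x-a}$ as tending to $F'(a)$ as $x\to a$; combined with monotonicity this yields the stated bound for $0<x\le a$. The only mildly subtle point anywhere in the lemma is the removable singularity of $G^1_a$ at $x=a$, which would be the expected main obstacle if attacked by direct computation, but it is neutralized at once by the Taylor expansion used in item 3 — that expansion is valid uniformly in $x$ near $a$ and shows automatically that $G^1_a$ (hence $G^2_a$) extends smoothly across $x=a$.
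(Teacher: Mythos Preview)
Your proposal is correct. The paper itself does not supply a proof of this lemma; it simply states that ``its proof is based on direct calculations. The details are left to interested readers.'' Your argument via the second-order Taylor expansion of $F(a)$ about $x$ with Lagrange remainder is precisely the clean direct calculation one would expect: it delivers item~3 immediately, and items~1, 2, and 4 fall out as corollaries of the sign of $F''(\xi)=1/\xi>0$ together with the identification $G^1_a(a)=F'(a)=\ln a+1$. Your treatment of the removable singularity at $x=a$ and the endpoint $x=0$ is also in order.
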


The positivity-preserving and unique solvability properties may now be established.

	\begin{thm}  
	\label{PNP-2nd-positivity} 
Given $n^m, \, n^{m-1} , \, p^m, \, p^{m-1} \in  {\mathcal{C}}_{\rm per}$, with $0 <  n^m_{i,j,k}, \, n^{m-1}_{i,j,k} , \, p^m_{i,j,k}, \, p^{m-1}_{i,j,k}$, $1 \le i, j, k \le N$,  and $n^m-p^m , \, n^{m-1} - p^{m-1} \in\mathring{\mathcal{C}}_{\rm per}$, there exists a unique solution $(n^{m+1},p^{m+1})\in \left[{\mathcal{C}}_{\rm per}\right]^2$ to the numerical scheme~\eqref{scheme-PNP-2nd-1} -- \eqref{scheme-PNP-2nd-chem pot-p}, with $0 < n^{m+1}_{i,j,k}, p^{m+1}_{i,j,k}$, $1 \le i, j, k \le N$ and $n^{m+1}-p^{m+1}\in\mathring{\mathcal{C}}_{\rm per}$.    
\end{thm}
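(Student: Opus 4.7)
The plan is to recast the nonlinear system \eqref{scheme-PNP-2nd-1}--\eqref{scheme-PNP-2nd-chem pot-p} as the Karush--Kuhn--Tucker system for the minimization of a strictly convex functional over a compact, mass-preserving admissible set, and then exploit the logarithmic singularity in the regularization terms $\dt\ln(n^{m+1}/n^m)$ and $\dt\ln(p^{m+1}/p^m)$ to drive the minimizer into the relative interior. First, summing \eqref{scheme-PNP-2nd-1} and \eqref{scheme-PNP-2nd-2} over the grid and applying discrete summation by parts yields mass conservation $\overline{n^{m+1}} = \overline{n^m}$, $\overline{p^{m+1}} = \overline{p^m}$, so that $n^{m+1}-p^{m+1} \in \mathring{\mathcal{C}}_{\rm per}$ and the inverses $(-\Delta_h)^{-1}$ and $\mathcal{L}_{\breve{\mathcal{M}}^{m+\hf}}^{-1}$ act on the relevant mean-zero spaces. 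I would then introduce
\[
A_h := \Big\{(n,p) \in \left[\mathcal{C}_{\rm per}\right]^2 : n_{i,j,k},\, p_{i,j,k} \ge 0,\ \overline{n} = \overline{n^m},\ \overline{p} = \overline{p^m}\Big\}
\]
together with the functional
\begin{align*}
\mathcal{J}(n,p)
&:= \frac{1}{2\dt}\| n - n^m\|^2_{\mathcal{L}_{\breve{\mathcal{M}}_n^{m+\hf}}^{-1}}
+ \frac{1}{2\dt}\| p - p^m\|^2_{\mathcal{L}_{\breve{\mathcal{M}}_p^{m+\hf}}^{-1}}
+ \langle G^0_{n^m}(n) + G^0_{p^m}(p) - n - p,\, 1 \rangle_\Omega \\
&\quad + \dt\,\langle n\ln(n/n^m) + p\ln(p/p^m) - n - p,\, 1 \rangle_\Omega
+ \| \tfrac{1}{2}(n+n^m) - \tfrac{1}{2}(p+p^m)\|^2_{-1,h},
\end{align*}
and verify by direct differentiation that the constrained Euler--Lagrange equations $\partial_n\mathcal{J} = \lambda_n$, $\partial_p\mathcal{J} = \lambda_p$ (with Lagrange multipliers for the two mean-value constraints) reproduce \eqref{scheme-PNP-2nd-chem pot-n}--\eqref{scheme-PNP-2nd-chem pot-p}, while inverting $\mathcal{L}_{\breve{\mathcal{M}}^{m+\hf}}$ recovers \eqref{scheme-PNP-2nd-1}--\eqref{scheme-PNP-2nd-2}.

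Next I would establish strict convexity and existence. Convexity of the $G^0$ pieces is item~2 of Lemma~\ref{lem: G property} (in fact strict on $(0,\infty)$ because item~3 gives $(G^1_a)'(x) > 0$); the artificial regularization pieces are strictly convex because $(x\ln(x/a) - x)'' = 1/x > 0$; the weighted $H^{-1}$ quadratics are strictly convex on the mean-zero subspace since $\breve{\mathcal{M}}^{m+\hf} \ge \dt^3 > 0$ (guaranteed by the $\dt^6$ offset in \eqref{mob ave-1}) makes $\mathcal{L}_{\breve{\mathcal{M}}^{m+\hf}}^{-1}$ positive definite; and the $H^{-1}$ coupling contributes a positive semidefinite quadratic. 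Extending $F(0) := 0$ and noting that $\lim_{x\to 0^+} G^0_{n^m}(x)$ is finite (because $(F(t)-F(n^m))/(t-n^m) \to \ln n^m$ as $t \to 0^+$) makes $\mathcal{J}$ continuous on the compact convex polytope $A_h$. Hence a minimizer exists, and strict convexity will force uniqueness once it is shown to lie away from the boundary of $A_h$.

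Proving this interiority is the main obstacle. Suppose for contradiction that a minimizer $(n^\star, p^\star)$ has $n^\star_{i_0,j_0,k_0} = 0$ at some grid point $(i_0,j_0,k_0)$. I would construct a mass-preserving perturbation $\tilde{n}(s) = n^\star + s\psi$ for small $s>0$, with $\psi_{i_0,j_0,k_0} = 1$ and a compensating negative entry at a grid point where $n^\star$ is strictly positive, and examine the one-sided directional derivative of $\mathcal{J}$ at $s=0^+$. Lemma~\ref{PNP-mobility-positivity-Lem-0} (applied with $g_0 = \dt^3$) bounds the weighted $H^{-1}$ contribution uniformly in $s$; Lemma~\ref{PNP-positivity-Lem-0} bounds the $(-\Delta_h)^{-1}(n^{m+\hf}-p^{m+\hf})$ coupling; and item~4 of Lemma~\ref{lem: G property} keeps $G^1_{n^m}$ bounded as its argument tends to $0^+$. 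In sharp contrast, the regularization piece contributes $\dt\ln(s/n^m_{i_0,j_0,k_0}) \to -\infty$, so the directional derivative tends to $-\infty$ and $\mathcal{J}$ strictly decreases along $\tilde{n}(s)$, contradicting minimality. The identical argument rules out $p^\star_{i_0,j_0,k_0} = 0$. Thus $(n^\star, p^\star)$ is componentwise strictly positive, the Euler--Lagrange equations coincide with the full scheme \eqref{scheme-PNP-2nd-1}--\eqref{scheme-PNP-2nd-chem pot-p}, and strict convexity delivers uniqueness; combined with the mass conservation noted at the outset, this yields $n^{m+1}-p^{m+1} \in \mathring{\mathcal{C}}_{\rm per}$ and completes the argument.
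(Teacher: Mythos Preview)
Your proposal is correct and follows essentially the same strategy as the paper's proof: both recast the scheme as minimization of a strictly convex functional involving the weighted $\mathcal{L}^{-1}$ norms, the antiderivatives $G^0_{n^m}$, $G^0_{p^m}$, the artificial regularization $\dt\, n\ln(n/n^m)$, and the $\|\cdot\|_{-1,h}$ coupling; both invoke Lemmas~\ref{PNP-positivity-Lem-0}, \ref{PNP-mobility-positivity-Lem-0} (with $g_0=\dt^3$ from the offset in~\eqref{mob ave-1}) and item~4 of Lemma~\ref{lem: G property} to bound the nonsingular pieces of the directional derivative, and both use the blow-up $\dt\ln(\cdot)\to-\infty$ to rule out a boundary minimizer. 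The only cosmetic difference is that the paper works on a $\delta$-shrunk set $\mathring{A}_{h,\delta}$ in the shifted variables $\nu=n-\beta_0$, $\rho=p-\beta_0$ and lets $\delta\to 0$, whereas you extend $\mathcal{J}$ continuously to $\{n,p\ge 0\}$ and argue directly via a one-sided directional derivative at the boundary; both routes yield the same contradiction.
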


	\begin{proof} 
Using an induction style argument, we assume that $\overline{n^k} = \overline{p^k} =\beta_0 >0$, for any $k \ge 0$. In addition, two auxiliary variables are introduced, $\nu^k := n^k-\beta_0$ and $\rho^k := p^k-\beta_0$, $k=m, m-1$, to simplify the notations in the later analysis. A careful calculation reveals that, the numerical solution of~\eqref{scheme-PNP-2nd-1} -- \eqref{scheme-PNP-2nd-chem pot-p} is equivalent to the minimization of the following discrete energy functional: 
	\begin{align} 
J^m_h (\nu, \rho) =& \frac{1}{2 \dt} \Big( \| \nu - \nu^m \|_{\mathcal{L}_{\breve{\cal M}_n^{m+\hf} }^{-1} }^2 + \| \rho - \rho^m \|_{\mathcal{L}_{\breve{\cal M}_p^{m+\hf} }^{-1} }^2  \Big) 
	\nonumber
	\\
&  + \dt ( \langle (\nu+\beta_0) \ln (\nu+\beta_0) + (\rho+\beta_0) \ln (\rho+\beta_0)  , {\bf 1} \rangle ) + \frac{1}{4} \| \nu - \rho \|_{-1,h}^2    \nonumber 
\\
&  + \langle G^0_{n^m} (\nu + \beta_0) , {\bf 1} \rangle 
   + \langle G^0_{p^m} (\rho + \beta_0) , {\bf 1} \rangle 
   +  \langle \nu + \beta_0 , f_n^m \rangle  + \langle \rho + \beta_0 , f_p^m \rangle ,  
	\label{PNP-positive-1} 
\\
  f_n^m =  & \frac12 (-\Delta_h)^{-1} (n^m - p^m)  - \dt \ln n^m ,  \quad 
  f_p^m =  \frac12 (-\Delta_h)^{-1} (p^m - n^m)  - \dt \ln p^m ,  \nonumber 
	\end{align} 
over the admissible set   
	\begin{equation}
\mathring{A}_h :=  \left\{ (\nu, \rho) \in \left[\mathring{\mathcal C}_{\rm per}\right]^2 \ \middle| \ 0 < \nu_{i,j,k}+\beta_0 , \, \rho_{i,j,k}+\beta_0 < M_h, \ 1\le i, j, k\le N  \right\} , \ M_h := \frac{\beta_0 | \Omega | }{h^3}. 
	\end{equation} 
It is clear that $J^m_h (n, p)$ is a strictly convex function over $\mathring{A}_h$.  The primary aim is to prove that there exists a minimizer of $J^m_h (n, p)$ over $\mathring{A}_h$.

For the convenience of the analysis, the following closed domain is defined: 
	\begin{equation}
\mathring{A}_{h,\delta} :=  \left\{ (\nu, \rho) \in \left[\mathring{\mathcal C}_{\rm per}\right]^2 \ \middle| \ \delta \le \nu_{i,j,k}+\beta_0 , \, \rho_{i,j,k}+\beta_0 \le M_h-\delta, \ 1\le i, j, k\le N  \right\} , \, \, \, \delta > 0 . 
	\end{equation} 
Of course, $\mathring{A}_{h,\delta}$ is a compact set in the hyperplane $H:=\left\{ (\nu, \rho )\ \middle| \ \overline{\nu} = \overline{\rho} = 0 \right\}$. As a result, there exists a (not necessarily unique) minimizer of $J^m_h (\nu, \rho)$ over $\mathring{A}_{h,\delta}$. The rest work of the positivity analysis is focused on the proof that, such a minimizer could not occur at one of the boundary points of $\mathring{A}_{h,\delta}$, provided $\delta$ is sufficiently small. 

A contradiction argument is applied. We suppose the contrary, that the minimizer of $J^m_h (\nu,\rho)$ occurs at a boundary point of $\mathring{A}_{h,\delta}$. Without loss of generality, the minimizer is assumed to be $( \nu^\star_{i,j,k} , \rho^\star_{i,j,k} )$, with $\nu^\star_{{i_0},{j_0},{k_0}}+\beta_0=\delta$, at a grid point $({i_0}, {j_0}, {k_0})$. On the other hand, we also assume that the maximum value of $\nu^\star$ is attained at the grid point $({i_1}, {j_1}, {k_1})$. Because of the mass conservation identity, $\overline{\nu^\star} = 0$, we see that $\nu^\star_{{i_1}, {j_1}, {k_1}} \ge 0$. 

The following directional derivative is calculated, for any $\psi\in\mathring{\mathcal C}_{\rm per}$: 
	\begin{align*}
d_s \! \left.J^m_h(\nu^\star +s\psi ,\rho^\star)\right|_{s=0} & = \frac{1}{\dt} \ciptwo{\mathcal{L}_{\breve{\cal M}_n^{m+\hf} }^{-1}\left(\nu^\star-\nu^m\right)}{\psi} + \dt \ciptwo{\ln\left(\nu^\star+\beta_0\right) }{\psi}
	\\
& \quad + \frac12 \ciptwo{(-\Delta_h)^{-1}\left(\nu^\star -\rho^\star \right)}{\psi} 
  + \ciptwo{  G^1_{n^m} (\nu^\star + \beta_0 ) }{\psi}  
  + \langle f_n^m , \psi \rangle . 
	\end{align*}
As a special example, we pick the direction $\psi \in \mathring{\mathcal{C}}_{\rm per}$, such that  
	\[
\psi_{i,j,k} = \delta_{i,i_0}\delta_{j,j_0}\delta_{k,k_0} - \delta_{i,i_1}\delta_{j,j_1}\delta_{k,k_1} ,
	\]
where $\delta_{k,\ell}$ is the Kronecker delta function. A careful calculation reveals that 
	\begin{align}
\frac{1}{h^3}d_s \! \left.J^m_h(\nu^\star +s\psi ,\rho^\star)\right|_{s=0} =&   
\dt \Big( \ln ( \nu^\star_{{i_0},{j_0},{k_0}}+\beta_0 ) 
- \ln (\nu^\star_{{i_1},{j_1},{k_1}} +\beta_0 ) \Big)   
	\nonumber 
	\\
& + \frac12 (-\Delta_h)^{-1} ( \nu^\star - \rho^\star )_{{i_0},{j_0},{k_0}} 
- \frac12 (-\Delta_h)^{-1} ( \nu^\star - \rho^\star )_{{i_1},{j_1},{k_1}} 
	\nonumber 
	\\
& + \frac{1}{\dt} \Big( \mathcal{L}_{\breve{\cal M}_n^{m+\hf} }^{-1}  ( \nu^\star - \nu^m )_{{i_0},{j_0},{k_0}} - \mathcal{L}_{\breve{\cal M}_n^{m+\hf} }^{-1}  ( \nu^\star - \nu^m )_{{i_1},{j_1},{k_1}} \Big)  \nonumber 
\\
& + G^1_{n^m} (\nu^\star_{i_0,j_0,k_0}+ \beta_0 ) 
  - G^1_{n^m} (\nu^\star_{i_1,j_1,k_1}+ \beta_0 )   \nonumber 
 \\
& 
  + (f_n^m)_{i_0,j_0,k_0} - ( f_n^m )_{i_1,j_1,k_1} .  
	\label{PNP-positive-4} 
	\end{align}
Because of the following facts 
	\[
n^\star_{{i_0}, {j_0}, {k_0}} = \nu^\star_{{i_0}, {j_0}, {k_0}}+\beta_0 = \delta \quad \mbox{and} \quad n^\star_{{i_1},{j_1},{k_1}} = \nu^\star_{{i_1},{j_1},{k_1}} +\beta_0 \ge \beta_0,
	\]
we immediately get  
	\begin{equation} 
\ln ( \nu^\star_{{i_0},{j_0},{k_0}}+\beta_0 ) - \ln ( \nu^\star_{{i_1},{j_1},{k_1}} +\beta_0 )   
\le \ln \frac{\delta}{\beta_0}.
	\label{PNP-positive-5} 
	\end{equation} 
For the third and fourth terms appearing in~\eqref{PNP-positive-4}, an application of Lemma~\ref{PNP-positivity-Lem-0} gives  
	\begin{equation} 
- 2 C_0 M_h \le (-\Delta_h)^{-1} ( \nu^\star -\rho^\star )_{{i_0},{j_0},{k_0}} - (-\Delta_h)^{-1} ( \nu^\star - \rho^\star )_{{i_1},{j_1},{k_1}} \le  2 C_0 M_h .
	\label{PNP-positive-6} 
	\end{equation}
For the fifth and sixth terms appearing in~\eqref{PNP-positive-4}, an application of Lemma~\ref{PNP-mobility-positivity-Lem-0} leads to 
	\begin{equation} 
- 2 C_1 \mathcal{M}_0^{-1} h^{-1/2} \le \mathcal{L}_{\breve{\cal M}_n^m }^{-1}  ( \nu^\star - \nu^m )_{{i_0},{j_0},{k_0}} - \mathcal{L}_{\breve{\cal M}_n^m }^{-1}  ( \nu^\star - \nu^m )_{{i_1},{j_1},{k_1}} \le  2 C_1 \mathcal{M}_0^{-1} h^{-1/2} .  
	\label{PNP-positive-7} 
	\end{equation}
Next, we look at the seventh and eighth terms.  Since $G^1_a (x)$ is an increasing function  of $x$ (for a fixed $a >0$), as given by Lemma~\ref{lem: G property}, the following inequalities are valid: 
	\begin{equation} 
	\begin{aligned} 
  & 
   G^1_{n^m} (\nu^\star_{i_0, j_0, k_0} + \beta_0) 	 
   = G^1_{n^m} (\delta)  
   \le 	 G^1_{n^m} (n^m_{i0, j0, k0}) = \ln n^m_{i_0, j_0, k_0} +1 \le D_1^{(m)} ,  
\\
  & 
   G^1_{n^m} (\nu^\star_{i_1, j_1, k_1} + \beta_0 ) 	 
   \ge ( G^1_{n^m} (\beta_0) )_{i_1, j_1, k_1} \ge - D_2^{(m)} ,        
\end{aligned} 
   \label{PNP-positive-7-2} 
\end{equation} 
for some positive constants $D_i^{(m)}$, $i = 1,2$, since both $\ln n^m_{i_0, j_0, k_0}$ and $( G^1_{n^m} (\beta_0) )_{i_1, j_1, k_1}$ are dependent only upon $n^m$. Similarly, the last two terms appearing in~\eqref{PNP-positive-4} are given functions, only dependent on $n^m$ and $p^m$, so that the following estimate is available: 
\begin{equation} 
  | f_n^m | \le D_3^{(m)} ,  \quad  (f_n^m)_{i_0,j_0,k_0} - ( f_n^m )_{i_1,j_1,k_1} \le 2 D_3^{(m)} ,
     \label{PNP-positive-7-3} 
\end{equation} 
for some positive constant $D_3^{(m)}$. Therefore,  a substitution of~\eqref{PNP-positive-5} -- \eqref{PNP-positive-7-3} into~\eqref{PNP-positive-4} results in 
	\begin{equation} 
\frac{1}{h^3}d_s \! \left.J^m_h(\nu^\star +s\psi ,\rho^\star)\right|_{s=0} \le \dt \ln \frac{\delta}{\beta_0} +  2C_0 M_h + 2 C_1 \mathcal{M}_0^{-1} \dt^{-1} h^{-1/2} 
+ D_1^{(m)} + D_2^{(m)} + 2 D_3^{(m)} .  
	\label{PNP-positive-8} 
	\end{equation}  
To simplify the notation, the following quantity is introduced 
	\[
D_0 := 2C_0 M_h + 2 C_1 \mathcal{M}_0^{-1}\dt^{-1} h^{-1/2} 
 + D_1^{(m)} + D_2^{(m)} + 2 D_3^{(m)} ,
	\]
which is a constant for fixed $\dt$ and $h$, though it may become singular as $\dt, h\to 0$. Of course, for any fixed $\dt$ and $h$, we are able to choose $\delta>0$ sufficiently small so that 
	\begin{equation} 
\dt \ln \frac{\delta}{\beta_0}  + D_0 < 0 .  
	\label{PNP-positive-9} 
	\end{equation} 
Consequently, the following observation is made: for $\delta>0$ sufficiently small,
	\begin{equation} 
d_s \! \left.J^m_h(\nu^\star +s\psi ,\rho^\star)\right|_{s=0}  < 0 .  
	\label{PNP-positive-10} 
	\end{equation} 
As a result, this inequality implies a contradiction to the assumption that $J^m_h$ has a minimum at $(\nu^\star ,\rho^\star)$, since the directional derivative is negative in a direction pointing into the interior of $\mathring{A}_{h,\delta}$.

Similar arguments could be applied to prove that the global minimum of $J^m_h (\nu,\rho)$ over $\mathring{A}_{h,\delta}$ could not possibly occur at a boundary point satisfying $\rho^\star_{{i_0},{j_0},{k_0}} +\beta_0 =\delta$, if $\delta$ is sufficiently small. 
 
 A combination of these facts leads to the conclusion that the global minimum of $J^m_h (\nu,\rho)$ over $\mathring{A}_{h,\delta}$ could only possibly occur at an interior point, for $\delta>0$ sufficiently small. Meanwhile, $J^m_h  (\nu,\rho)$ is a smooth function, as long as $\nu + \beta_0$ and $\rho+ \beta_0$ are positive at a point-wise level. Therefore, there must be a solution $(\nu_{i,j,k}, \rho_{i,j,k}) \in \mathring{A}_{h, \delta}$ (provided that $\delta$ is small enough), so that  
	\begin{equation} 
d_s \! \left.J^m_h(\nu +s\psi ,\rho +s\phi)\right|_{s=0} =0 ,  \quad \forall \, (\psi,\phi) \in \left[\mathring{\mathcal C}_{\rm per}\right]^2 . 
	\label{PNP-positive-13} 
	\end{equation} 
In fact, this equation is equivalent to the numerical solution of~\eqref{scheme-PNP-2nd-1} -- \eqref{scheme-PNP-2nd-chem pot-p}, because the variational derivatives of  $J^m_h  (\nu,\rho)$ exactly give the numerical scheme. As a result, there exists a numerical solution to \eqref{scheme-PNP-2nd-1} -- \eqref{scheme-PNP-2nd-chem pot-p}, over the compact domain $\mathring{A}_{h,\delta} \subset \mathring{A}_{h}$, with point-wise positive values for $n^{m+1}$, $p^{m+1}$. The existence of a positive numerical solution is demonstrated. 

The uniqueness analysis for the numerical solution~\eqref{scheme-PNP-2nd-1} -- \eqref{scheme-PNP-2nd-chem pot-p} (over $\mathring{A}_h$) is a direct consequence of  the strict convexity of $J^m_h (\nu, \rho)$ (in terms of $\nu$ and $\rho$). This finishes the proof of Theorem~\ref{PNP-2nd-positivity}.  
	\end{proof}

	\begin{rem} 
The modified Crank-Nicolson approximation to the nonlinear logarithmic terms, namely $G^1_{n^m} (n^{m+1} )$ and $G^1_{p^m} (p^{m+1} )$, makes its inner product with $n^{m+1} - n^m$, $p^{m+1} - p^m$, exactly the difference of the logarithmic energies between two consecutive time steps. This fact will greatly facilitate the energy stability analysis, as we will see in the next section. Meanwhile, it is observed that, the proposed nonlinear approximation terms do not indicate a singularity as $n^{m+1} \to 0$ or $p^{m+1} \to 0$. Such a feature leads to a difficulty of a theoretical justification for the positivity-preserving property. To overcome this difficulty, a nonlinear regularization term, in the form of $\dt ( \ln n^{m+1}  - \ln n^m)$, $\dt ( \ln p^{m+1} - \ln p^m)$, are added in the numerical scheme. Although this artificial regularization term is of order $O (\dt^2)$, a singularity is available as $n^{m+1} \to 0$ or $p^{m+1} \to 0$, and such a singularity has played an important role in the theoretical justification of the positivity-preserving property for the proposed numerical scheme~\eqref{scheme-PNP-2nd-1} -- \eqref{scheme-PNP-2nd-chem pot-p}. 
	\end{rem}

The discrete energy functional is defined via
	\begin{eqnarray} 
E_h (n, p) := \langle n \ln n + p \ln p , {\bf 1} \rangle + \frac{1}{2} \| n - p \|_{-1,h}^2 . 
	\label{PNP-discrete energy}
	\end{eqnarray} 


	\begin{thm}
	\label{PNP-2nd-energy stability} 
For the numerical solution~\eqref{scheme-PNP-2nd-1} -- \eqref{scheme-PNP-2nd-chem pot-p}, the following energy dissipation is valid: 
	\begin{equation} 
E_h (n^{m+1}, p^{m+1}) + R  \le E_h (n^{m}, p^{m}), 
  \label{PNP-energy-0} 
	\end{equation} 
where
	\[
R:= \dt \left(\eipvec{\breve{\mathcal{M}}_n^{m+\hf} \nabla_h \mu_n^{m+\hf}}{\nabla_h \mu_n^{m+\hf}} + \eipvec{\breve{\mathcal{M}}_p^{m+\hf} \nabla_h \mu_p^{m+\hf}}{\nabla_h \mu_p^{m+\hf} } \right) \ge 0 .
	\]
Consequently, $E_h (n^m, p^m) \le E_h (n^0, p^0) \le C_2$, for all $m\in \mathbb{N}$, where $C_2>0$ is a constant independent of $h$ and $\dt$. 
	\end{thm}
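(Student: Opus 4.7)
The proof plan is to take the discrete $L_h^2$ inner product of the two mass balance equations \eqref{scheme-PNP-2nd-1}--\eqref{scheme-PNP-2nd-2} with $\dt\, \mu_n^{m+\hf}$ and $\dt\, \mu_p^{m+\hf}$, respectively, and sum. On the right-hand side, summation by parts (Lemma~\ref{lemma1}) converts each term into $-\dt\eipvec{\breve{\mathcal M}_\ast^{m+\hf}\nabla_h\mu_\ast^{m+\hf}}{\nabla_h\mu_\ast^{m+\hf}}$; since $\breve{\mathcal M}_n^{m+\hf},\breve{\mathcal M}_p^{m+\hf}>0$ by construction \eqref{mob ave-1}, this precisely equals $-R\le 0$.

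The heart of the argument is to show the left-hand side equals $E_h(n^{m+1},p^{m+1})-E_h(n^m,p^m)$ plus a non-negative remainder. I would handle the four pieces of each chemical potential separately. First, the modified Crank-Nicolson logarithmic piece contributes, by the very definition of $G^1_{n^m}$ in \eqref{defi-G-1},
\begin{equation*}
\langle n^{m+1}-n^m,\, G^1_{n^m}(n^{m+1})\rangle_\Omega = \langle F(n^{m+1})-F(n^m),\,\mathbf{1}\rangle_\Omega = \langle n^{m+1}\ln n^{m+1}-n^m\ln n^m,\,\mathbf{1}\rangle_\Omega,
\end{equation*}
which is exactly the logarithmic energy increment; the analogous identity holds for $p$. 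Second, the constant $-1$ in each chemical potential contributes $-\langle n^{m+1}-n^m,\mathbf 1\rangle_\Omega=0$ by mass conservation (the scheme is in divergence form and $\overline{n^0}=\overline{p^0}$, propagated by induction), and likewise for $p$. Third, the regularization pieces $\dt\ln(n^{m+1}/n^m)$ and $\dt\ln(p^{m+1}/p^m)$ paired against $n^{m+1}-n^m$ and $p^{m+1}-p^m$ are non-negative, because $\ln$ is strictly increasing, so they can be dropped to an inequality.

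Fourth, and the one step that requires a small rearrangement, is the electric-potential contribution. Combining the $(-\Delta_h)^{-1}(n^{m+\hf}-p^{m+\hf})$ term from $\mu_n^{m+\hf}$ and the $(-\Delta_h)^{-1}(p^{m+\hf}-n^{m+\hf})$ term from $\mu_p^{m+\hf}$, and using $n^{m+\hf}-p^{m+\hf}=\tfrac12[(n^{m+1}-p^{m+1})+(n^m-p^m)]$ together with symmetry of $(-\Delta_h)^{-1}$ on $\mathring{\mathcal C}_{\rm per}$, one obtains the telescoping
\begin{equation*}
\tfrac12\bigl(\|n^{m+1}-p^{m+1}\|_{-1,h}^2-\|n^m-p^m\|_{-1,h}^2\bigr).
\end{equation*}
Here I use that $(n^{m+1}-p^{m+1})-(n^m-p^m)\in\mathring{\mathcal C}_{\rm per}$, which again follows from mass conservation established in Theorem~\ref{PNP-2nd-positivity}.

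Putting the four pieces together yields $E_h(n^{m+1},p^{m+1})-E_h(n^m,p^m)+R+(\text{nonneg.\ regularization})\le 0$, which is \eqref{PNP-energy-0}. The uniform bound $E_h(n^m,p^m)\le E_h(n^0,p^0)$ then follows by iterating in $m$, and a bound $E_h(n^0,p^0)\le C_2$ independent of $h,\dt$ follows from the smoothness and positivity of the initial data. The main potential obstacle is not conceptual but notational bookkeeping: the telescoping identity for the logarithmic piece is immediate from the construction of $G^1_{n^m}$ (that was precisely the design principle highlighted in the introduction), so the only delicate point is the rearrangement for the electrostatic contribution, which follows cleanly once mass conservation is invoked.
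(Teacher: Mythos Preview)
Your proposal is correct and follows essentially the same approach as the paper: take the discrete inner product of the scheme with $\mu_n^{m+\hf}$ and $\mu_p^{m+\hf}$, use summation by parts to produce $-R$, and then identify the left-hand side term by term using the telescoping identity for $G^1_{n^m}$, the nonnegativity of $\langle n^{m+1}-n^m,\ln n^{m+1}-\ln n^m\rangle$, and the polarization identity for the $\|\cdot\|_{-1,h}$ norm. The only minor slip is that your final displayed relation should be an equality, not $\le 0$; the inequality \eqref{PNP-energy-0} then follows by dropping the nonnegative regularization term.
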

	
	\begin{proof} 
A discrete inner product of~\eqref{scheme-PNP-2nd-1} with $\mu_n^{m+\hf}$, and of~\eqref{scheme-PNP-2nd-2} with $\mu_p^{m+\hf}$, yields  
	\begin{align}
& \ciptwo{n^{m+1} - n^m}{\mu_n^{m+\hf}} + \ciptwo{p^{m+1} - p^m}{\mu_p^{m+\hf}} 
	\nonumber
	\\
&\qquad + \dt \left( \eipvec{\breve{\mathcal{M}}_n^{m+\hf} \nabla_h \mu_n^{m+\hf}}{\nabla_h \mu_n^{m+\hf}}  + \eipvec{\breve{\mathcal{M}}_p^{m+\hf} \nabla_h \mu_p^{m+\hf}}{\nabla_h \mu_p^{m+\hf}} \right) = 0 .
	\label{PNP-energy-1} 
	\end{align}
Meanwhile, the following equalities and inequalities are valid: 
	\begin{align} 
\ciptwo{n^{m+1} - n^m}{ G^1_{n^m} (n^{m+1} ) } = & \ciptwo{n^{m+1} \ln n^{m+1}}{{\bf 1}} - \ciptwo{n^m \ln n^m}{ {\bf 1}} ,
	\label{PNP-energy-2-1}
	\\
\ciptwo{p^{m+1} - p^m}{ G^1_{p^m} (p^{m+1} ) } = & \ciptwo{p^{m+1} \ln p^{m+1}}{{\bf 1}} - \ciptwo{p^m \ln p^m}{ {\bf 1}} , 
     \label{PNP-energy-2-2}
	\\
   \ciptwo{n^{m+1} - n^m}{(-\Delta_h)^{-1} ( n^{m+\hf} - p^{m+\hf} )} & 
+ \ciptwo{p^{m+1} - p^m}{ (-\Delta_h)^{-1} ( p^{m+\hf} - n^{m+\hf} )}  
	\nonumber
	\\ 
  = &  \frac12 \left(  \nrm{n^{m+1} - p^{m+1}}_{-1,h}^2 - \nrm{ n^m - p^m }_{-1,h}^2 \right) , 
	\label{PNP-energy-2-3}
	\\
  \langle n^{m+1} - n^m , \ln n^{m+1} - \ln n^m \rangle \ge & 0 , 
       \label{PNP-energy-2-4}  
       \\
  \langle p^{m+1} - p^m , \ln p^{m+1} - \ln p^m \rangle \ge & 0 . 
       \label{PNP-energy-2-5}  
	\end{align}  
Consequently, a substitution of~\eqref{PNP-energy-2-1} -- \eqref{PNP-energy-2-5} into \eqref{PNP-energy-1} gives~\eqref{PNP-energy-0}, so that an energy stability is proved. Meanwhile, a consistency argument implies that, there is a constant $C_2>0$, independent of $h$ and $\dt$, so that $E_h (n^0, p^0) \le C_2$; the details are left to the interested reader. This finishes the proof of Theorem~\ref{PNP-2nd-energy stability}. 
	\end{proof} 
	
	\begin{rem}
The modified Crank-Nicolson approximation to the logarithmic nonlinear term, in the form of $\frac{F (u^{m+1}) - F (u^m)}{u^{m+1} - u^m}$ (with $u$ either the component $n$ or $p$), has played an essential role in the energy stability analysis. This approximation may seem singular at the first glance, as $u^{m+1} \to u^m$, while a more careful observation reveals its analytic property as $u^{m+1} - u^m \to 0$, and a singularity will not appear even if $u^{m+1} = u^m$; in fact, such a difference quotient function has an exact value of $\ln u +1$ if $u^{m+1} = u^m = u$. Furthermore, the advantage of this approximation is associated with the fact that, its inner product with the discrete temporal derivative exactly gives the corresponding nonlinear energy difference; henceforth the energy stability is ensured for the logarithmic part. Such a modified Crank-Nicolson approximation has been successfully applied to various gradient flows, such as the polynomial approximation of the Cahn-Hilliard equation~\cite{chen19a, cheng16a, diegel17, diegel16, guo16, han15}, phase field crystal~\cite{baskaran13a, baskaran13b, dong18a, hu09}, epitaxial thin film growth~\cite{chen14, shen12}, \emph{et cetera}. Meanwhile, all these existing works have been focused on the polynomial pattern of the energy potential, while a logarithmic function is involved in the proposed scheme. 	
	\end{rem}

	\section{Optimal rate convergence analysis}
	\label{sec:convergence}

Let $({\mathsf N}, {\mathsf P}, \Phi)$ be the exact PDE solution for the non-dimensional PNP system~\eqref{equation-PNP-1-nd} -- \eqref{equation-PNP-3-nd}. 
The following regularity assumption is made for the exact solution:  
	\begin{equation}
{\mathsf N}, {\mathsf P}  \in \mathcal{R} := H^6 \left(0,T; C_{\rm per}(\Omega)\right) \cap H^5 \left(0,T; C^2_{\rm per}(\Omega)\right) \cap L^\infty \left(0,T; C^6_{\rm per}(\Omega)\right).
	\label{assumption:regularity.1}
	\end{equation}
In addition, the following separation property is assumed for the exact solution, for the convenience of the analysis: 
	\begin{equation} 
{\mathsf N}  \ge \epsilon_0 ,\quad  {\mathsf P} \ge \epsilon_0 ,  \quad \mbox{for some $\epsilon_0 > 0$} ,  \quad \mbox{at a point-wise level} . 
    \label{assumption:separation}
	\end{equation}  
In fact, this property has been established for the two-dimensional Cahn-Hilliard flow with logarithmic Flory-Huggins energy potential; see the related works~\cite{abels07, debussche95, elliott96b, miranville04}. A similar separation property is also expected for the PNP system. 

To facilitate the error analysis, we also introduce the Fourier projection of the exact solution as ${\mathsf N}_N (\, \cdot \, ,t) := {\cal P}_N {\mathsf N} (\, \cdot \, ,t)$, ${\mathsf P}_N (\, \cdot \, ,t) := {\cal P}_N {\mathsf P} (\, \cdot \, ,t)$, with the projection into ${\cal B}^K$, the space of trigonometric polynomials of degree to $K$ ($N=2K +1$).  The standard projection estimate is recalled as 
	\begin{equation}
\nrm{ {\mathsf N}_N - {\mathsf N}}_{L^\infty(0,T;H^k)} \le C h^{\ell-k} \| {\mathsf N} \|_{L^\infty(0,T;H^\ell)},  \ \nrm{ {\mathsf P}_N - {\mathsf P} }_{L^\infty(0,T;H^k)} \le C h^{\ell-k} \| {\mathsf P} \|_{L^\infty(0,T;H^\ell)} , \label{projection-est-0}
	\end{equation}
for any $\ell\in\mathbb{N}$ with $0 \le k \le \ell$, $({\mathsf N}, {\mathsf P}) \in L^\infty(0,T;H^\ell_{\rm per}(\Omega))$. Of course, this Fourier projection estimate does not automatically ensure the positivity of the ion concentration variables;  meanwhile, a similar phase separation estimate, ${\mathsf N}_N \ge \frac12 \epsilon_0$, ${\mathsf P}_N \ge \frac12 \epsilon_0$, would be available by taking $h$ sufficiently small (corresponding to a large $N$). For simplicity of notation, we denote ${\mathsf N}_N^m = {\mathsf N}_N (\, \cdot \, , t_m)$, ${\mathsf P}_N^m = {\mathsf P}_N (\, \cdot \, , t_m)$, with $t_m = m\cdot \dt$. Moreover, the fact that $({\mathsf N}_N, {\mathsf P}_N) \in {\cal B}^K$ leads to the mass conservative identity  at the discrete level:
	\begin{eqnarray} 
\begin{aligned} 
  &
\overline{ {\mathsf N}_N^m} = \frac{1}{|\Omega|}\int_\Omega \, {\mathsf N}_N ( \cdot, t_m) \, d {\bf x} = \frac{1}{|\Omega|}\int_\Omega \, {\mathsf N}_N ( \cdot, t_{m-1}) \, d {\bf x} = \overline{ {\mathsf N}_N^{m-1}} ,  \quad 
\forall m \in \mathbb{N} ,
\\
   	 &
\overline{ {\mathsf P}_N^m}  = \overline{ {\mathsf P}_N^{m-1}} ,  \quad 
\forall m \in \mathbb{N} ,  \quad \mbox{(similar analysis)} . 
\end{aligned} 
  \label{mass conserv-1}
	\end{eqnarray}
Meanwhile, the discrete mass conservative identity for the numerical solution~\eqref{scheme-PNP-2nd-1} -- \eqref{scheme-PNP-2nd-2} is also straightforward:
	\begin{equation}
\overline{n^m} = \overline{n^{m-1}} ,  \, \, \, 
\overline{p^m} = \overline{p^{m-1}} , \quad \forall \ m \in \mathbb{N} . 
	\label{mass conserv-2}
	\end{equation}
Of course, the mass conservative projection could be applied to the initial data:  
	\begin{equation}
\begin{aligned} 
  & 
(n^0)_{i,j,k} = {\mathcal P}_h {\mathsf N}_N (\, \cdot \, , t=0) 
:= {\mathsf N}_N (p_i, p_j, p_k, t=0) , 
\\
  & 
 (p^0)_{i,j,k} = {\mathcal P}_h {\mathsf P}_N (\, \cdot \, , t=0)  
  := {\mathsf P}_N (p_i, p_j, p_k, t=0) .  
\end{aligned} 
	\label{initial data-0}
	\end{equation}	
For the exact electric potential $\Phi$, we denote its Fourier projection as $\Phi_N$. In turn, the error grid functions are defined as
	\begin{equation}
e_n^m := \mathcal{P}_h {\mathsf N}_N^m - n^m ,  \, \, \, 
e_p^m := \mathcal{P}_h {\mathsf P}_N^m - p^m , \, \, \, 
e_\phi^m := \mathcal{P}_h \Phi_N^m - \phi^m , 
\quad \forall \ m \in \mathbb{N} .
	\label{error function-1}
	\end{equation}
The above derivations indicate that $\overline{e_n^m} = \overline{e_p^m} =0$, for any $m \in \mathbb{N}$. As a result, the discrete norm $\nrm{ \, \cdot \, }_{-1,h}$ is well defined for both $e_n^m$ and $e_p^m$. The optimal rate convergence result is stated in the following theorem.

\begin{thm}
	\label{thm:convergence}
Given initial data ${\mathsf N}(\, \cdot \, ,t=0), {\mathsf P} (\, \cdot \, ,t=0) \in C^6_{\rm per}(\Omega)$, suppose the exact solution for the PNP system~\eqref{equation-PNP-1-nd} -- \eqref{equation-PNP-2-nd} is of regularity class $\mathcal{R}$. Then, provided $\dt$ and $h$ are sufficiently small, and under the linear refinement requirement $\lambda_1 h \le \dt \le \lambda_2 h$, we have
	\begin{equation}
\| e_n^m \|_2 + \| e_p^m \|_2 
+ \| e_\phi^m \|_{H_h^2} \le C ( \dt^2 + h^2 ) , 
	\label{convergence-0}
	\end{equation}
for all positive integers $m$, such that $t_m=m\dt \le T$, where $C$, $\lambda_1$, and $\lambda_2$ are constants independent of $\dt$ and $h$.
	\end{thm}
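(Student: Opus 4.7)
The proof plan is to combine a higher-order asymptotic consistency expansion with the classical rough estimate/refined estimate bootstrap, as sketched in the introduction. The variable mobility and the modified Crank-Nicolson logarithmic quotient preclude a one-shot Gronwall argument, because the numerical solution is only known \emph{a priori} to be positive, not bounded uniformly away from zero; one must first upgrade positivity to a uniform lower bound, and only then extract the optimal rate.

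First I would construct a higher-order consistent approximation
\[
\hat{\mathsf N} := {\mathsf N}_N + \dt^2 {\mathsf N}^{(1)} + \dt^3 {\mathsf N}^{(2)} + h^2 {\mathsf N}^{(3)},
\]
and analogously $\hat{\mathsf P}$, where the corrections solve linearized PNP systems around $({\mathsf N},{\mathsf P})$ driven by the leading truncation errors of the modified Crank-Nicolson logarithm, the artificial regularization $\dt(\ln n^{m+1} - \ln n^m)$, the explicitly extrapolated mobility, and the centered-difference Laplacian. The separation property ${\mathsf N}, {\mathsf P} \ge \epsilon_0$ keeps all these linear parabolic problems well-posed with smooth solutions. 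A Taylor expansion then shows that $(\hat{\mathsf N}, \hat{\mathsf P})$ satisfies the fully discrete scheme with truncation error of size $O(\dt^4 + h^4)$ in a discrete $H_h^{-1}$ norm, and that $\hat{\mathsf N}, \hat{\mathsf P} \ge \tfrac12 \epsilon_0$ pointwise for $\dt, h$ small.

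Second, I would carry out a rough error estimate on $\tilde e_n := \mathcal P_h \hat{\mathsf N} - n$, $\tilde e_p := \mathcal P_h \hat{\mathsf P} - p$. Subtracting the scheme from its perturbed counterpart, I test the $n$-component against $\mathcal L_{\breve{\mathcal M}_n^{m+\hf}}^{-1}(\tilde e_n^{m+1} - \tilde e_n^m)/\dt$ and the $p$-component analogously. The identities \eqref{PNP-energy-2-1}--\eqref{PNP-energy-2-5} yield nonnegative functional differences for the logarithmic and electrostatic parts, while the mobility discrepancy $\breve{\mathcal M}_n^{m+\hf}(n) - \breve{\mathcal M}_n^{m+\hf}(\hat{\mathsf N})$ and the logarithmic regularization are controlled crudely using the truncation bound and the inverse inequality $\nrm{\cdot}_\infty \le C h^{-3/2}\nrm{\cdot}_2$. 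A discrete Gronwall argument yields a sub-optimal rate which, under the refinement $\dt \le \lambda_2 h$ and a second inverse inequality, implies $\nrm{\tilde e_n^m}_\infty + \nrm{\tilde e_p^m}_\infty \to 0$ as $h \to 0$; combined with $\hat{\mathsf N}, \hat{\mathsf P} \ge \tfrac12 \epsilon_0$ this gives the discrete separation $n^m, p^m \ge \tfrac14 \epsilon_0$.

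Third, with uniform positivity in hand, the mobilities $\breve{\mathcal M}_n^{m+\hf}, \breve{\mathcal M}_p^{m+\hf}$ are bounded above and below by constants independent of $\dt, h$, so $\mathcal L_{\breve{\mathcal M}}^{-1}$ is a uniformly elliptic inverse. I would redo the same test on the error equation, now Taylor expanding $G^1_{n^m}(n^{m+1})$ around $(\hat{\mathsf N}^m, \hat{\mathsf N}^{m+1})$ and exploiting $G^2_a > 0$ from Lemma~\ref{lem: G property} to extract a positive-definite quadratic form in $(\tilde e_n, \tilde e_p)$. The remaining mobility and regularization terms are now genuinely $O(\dt^2 + h^2)$ and can be absorbed via Young's inequality. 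A final discrete Gronwall yields $\nrm{\tilde e_n^m}_2 + \nrm{\tilde e_p^m}_2 \le C(\dt^2 + h^2)$; the triangle inequality with the Fourier projection estimate gives the same bound for $e_n, e_p$, and discrete elliptic regularity applied to $-\Delta_h e_\phi = e_p - e_n$ controls $e_\phi$ in $H_h^2$. The principal obstacle is exactly the coupling between the variable mobility and the $H^{-1}$-type inner product: without a uniform lower bound on $n, p$ one cannot treat $\mathcal L_{\breve{\mathcal M}}^{-1}$ as a uniformly bounded operator, which is precisely why the higher-order expansion and rough estimate are indispensable precursors to the refined estimate.
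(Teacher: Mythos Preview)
Your overall architecture---higher-order consistency expansion, rough estimate to recover separation, refined estimate for the optimal rate, discrete elliptic regularity for $\phi$---matches the paper exactly. But the mechanism you describe for the rough estimate does not work as stated. You invoke the stability identities \eqref{PNP-energy-2-1}--\eqref{PNP-energy-2-5} to obtain ``nonnegative functional differences for the logarithmic and electrostatic parts.'' Those identities produce $\langle n^{m+1}-n^m,\, G^1_{n^m}(n^{m+1})\rangle = \langle F(n^{m+1})-F(n^m),\,{\bf 1}\rangle$ for the \emph{solution}; the object appearing in the error equation is instead $\langle \tilde n^{m+1}-\tilde n^m,\, G^1_{\hat{\mathsf N}^m}(\hat{\mathsf N}^{m+1}) - G^1_{n^m}(n^{m+1})\rangle$, and no telescoping identity applies to this difference of differences. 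The Taylor expansion you correctly invoke in the refined step requires $n^{m+1}$ to be bounded above and below, which is precisely what the rough estimate must first establish---so it cannot be used here either.

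The paper closes this gap with a separate device (Lemma~\ref{lem: rough integral estimate}): one tests with the chemical-potential error $\tilde\mu_n^{m+\hf}$ and bounds $\langle \tilde n^{m+1},\, G^1_{\hat{\mathsf N}^m}(\hat{\mathsf N}^{m+1}) - G^1_{n^m}(n^{m+1})\rangle$ from below by splitting the grid into $\Lambda_n=\{(i,j,k):n^{m+1}_{i,j,k}\ge 2C^\star+1\}$ and its complement. On the complement a Taylor expansion is legitimate; on $\Lambda_n$ the monotonicity of $G^1_a$ together with the sign of $\tilde n^{m+1}$ forces a contribution bounded below by $\tfrac12 C^\star |\Lambda_n|\, h^3$. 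Comparing against the $O(\dt^{13/2}+h^{13/2})$ right-hand side shows $\Lambda_n=\emptyset$, after which an improved lower bound gives $\|\tilde n^{m+1}\|_2\le \dt^3+h^3$ in a \emph{single step} under the inductive a-priori hypothesis $\|\tilde n^k\|_2\le\dt^{15/4}+h^{15/4}$ for $k=m,m-1$---not via a Gronwall sweep over all steps. This single-step rough bound also delivers $\|n^{m+1}-n^m\|_\infty=O(\dt)$, which is needed to control the Taylor coefficients in Lemma~\ref{prelim est-1}; only then does the refined Gronwall close at order $\dt^4+h^4$, recovering the a-priori hypothesis and completing the induction.
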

	
	
\subsection{Higher order consistency analysis} 
\label{subsec-consistency}

A direct substitution of the project solution $({\mathsf N}_N, {\mathsf P}_N)$ into the numerical scheme~\eqref{scheme-PNP-2nd-1} -- \eqref{scheme-PNP-2nd-chem pot-p} gives the second order accuracy in both time and space. However, this leading local truncation error will not be sufficient to recover an $L_h^\infty$ bound of the discrete temporal derivative of the numerical solution, which is needed in the nonlinear convergence analysis. To overcome this subtle difficulty, we have to apply a higher order consistency estimate via a perturbation analysis. This  technique has been reported for a wide class of nonlinear PDEs, such as incompressible fluid flow~\cite{E95, E02, STWW03, STWW07, WL00, WL02, WLJ04}, various gradient models~\cite{baskaran13b, guan17a, guan14a, LiX2021}, the porous medium equation based on the energetic variational approach~\cite{duan19c}, nonlinear wave equation~\cite{WangL15}, \emph{et cetera}. Such a higher order consistency result is stated below, and the detailed proof will be provided in Appendix~\ref{app: consistency}.


\begin{prop}  \label{prop:consistency} 
Given the exact solution $({\mathsf N}, {\mathsf P})$ for the PNP system~\eqref{equation-PNP-1-nd} -- \eqref{equation-PNP-3-nd} and its Fourier projection $({\mathsf N}_N, {\mathsf P}_N)$. There exist auxiliary fields, ${\mathsf N}_{\dt, 1}$, ${\mathsf N}_{\dt, 2}$, ${\mathsf N}_{h, 1}$, ${\mathsf P}_{\dt ,1}$, ${\mathsf P}_{\dt, 2}$, ${\mathsf P}_{h, 1}$, so that the following  
    \begin{equation}
 \hat{\mathsf N} = {\mathsf N}_N +  {\cal P}_N ( \dt^2 {\mathsf N}_{\dt, 1}  
 + \dt^3 {\mathsf N}_{\dt, 2}  
  + h^2 {\mathsf N}_{h, 1} ), \, \, \,  
 \hat{\mathsf P} = {\mathsf P}_N + {\cal P}_N ( \dt^2 {\mathsf P}_{\dt, 1} 
 + \dt^3 {\mathsf P}_{\dt, 2}  
 + h^2 {\mathsf P}_{h, 1} ) , 
	\label{consistency-1}
	\end{equation} 
satisfies the numerical scheme up to a higher $O (\dt^4 + h^4)$ consistency: 


\begin{eqnarray} 
  \frac{\hat{\mathsf N}^{m+1} - \hat{\mathsf N}^m}{\dt} &=& 
    \nabla_h \cdot \Big( 
      ( \frac32 \hat{\mathsf N}^m - \frac12 \hat{\mathsf N}^{m-1} ) 
     \nabla_h ( 
    G_{\hat{\mathsf N}^m}^1 ( \hat{\mathsf N}^{m+1} ) +    
  (-\Delta_h)^{-1} ( \hat{\mathsf N}^{m+\hf} 
  - \hat{\mathsf P}^{m+\hf}  )  )   \nonumber 
\\
  && 
    + \dt ( \ln \hat{\mathsf N}^{m+1} - \ln \hat{\mathsf N}^m ) \Big)  
     + \tau_n^{m+\hf} ,  \label{consistency-13-1} 
\\ 
   \frac{\hat{\mathsf P}^{m+1} - \hat{\mathsf P}^m}{\dt} &=& 
    \nabla_h \cdot \Big( 
          ( \frac32 \hat{\mathsf P}^m - \frac12 \hat{\mathsf P}^{m-1} ) 
     \nabla_h ( 
    G_{\hat{\mathsf P}^m}^1 ( \hat{\mathsf P}^{m+1} ) +    
  (-\Delta_h)^{-1} ( \hat{\mathsf P}^{m+\hf} 
  - \hat{\mathsf N}^{m+\hf}  )  )   \nonumber 
\\
  && 
    + \dt ( \ln \hat{\mathsf P}^{m+1} - \ln \hat{\mathsf P}^m ) \Big)  
     + \tau_p^{m+\hf} ,  \label{consistency-13-2} 
\end{eqnarray} 
where $\| \tau_n^{m+\hf} \|_2 , \, \, \| \tau_p^{m+\hf} \|_2 \le C (\dt^4 + h^4)$. The constructed functions, ${\mathsf N}_{\dt ,1}$, ${\mathsf N}_{\dt ,2}$, ${\mathsf N}_{h, 1}$, ${\mathsf P}_{\dt, 1}$, ${\mathsf P}_{\dt ,2}$, ${\mathsf P}_{h, 1}$, depend solely on the exact solution $({\mathsf N}, {\mathsf P})$, and their derivatives are bounded. 

(1) The following mass conservative identities and zero-mean property for the local truncation error are available:	\begin{align} 
	  &
n^0 \equiv  \hat{\mathsf N}^0 ,  \quad  p^0 \equiv  \hat{\mathsf P}^0 ,  \quad \overline{n^k} = \overline{n^0} ,  \quad  \overline{p^k} = \overline{p^0} , \quad \forall \, k \ge 0, 
     \label{consistency-14-1}
\\
   & 
\overline{\hat{\mathsf N}^k} = \frac{1}{|\Omega|}\int_\Omega \, \hat{\mathsf N} ( \cdot, t_k) \, d {\bf x}  = \frac{1}{|\Omega|}\int_\Omega \, \hat{\mathsf N}^0 \, d {\bf x} = \overline{n^0} ,  
 \quad \overline{\hat{\mathsf P}^k}  = \overline{p^0} , \quad \forall \, k \ge 0,  
	\label{consistency-14-2}
\\
  & 
   \overline{\tau_n^{m+\hf}} = \overline{\tau_p^{m+\hf}} = 0 ,  \quad \forall \, m \ge 0 . 
   \label{consistency-14-4}   
	\end{align}
	
(2) A similar phase separation property is valid for the constructed $(\hat{\mathsf N}, \hat{\mathsf P})$:  
\begin{equation} 
  \hat{\mathsf N} \ge \epsilon_0^\star , \, \, \, \hat{\mathsf P}  \ge \epsilon_0^\star ,  
   \quad \mbox{for $\epsilon_0^\star > 0$} .  
    \label{assumption:separation-2}
	\end{equation}  
 
(3) A discrete $W^{1,\infty}$ bound for the  constructed profile $(\hat{\mathsf N}, \hat{\mathsf P})$, as well as its discrete temporal derivative, is available:  
\begin{align} 
  & 
  \| \hat{\mathsf N}^k \|_\infty \le C^\star , \, \, \, 
  \| \hat{\mathsf P}^k \|_\infty \le C^\star  , \, \, \, 
  \| \nabla_h \hat{\mathsf N}^k \|_\infty \le C^\star , \, \, \, 
  \| \nabla_h \hat{\mathsf P}^k \|_\infty \le C^\star  ,  \quad \forall \, k \ge 0 ,  
    \label{assumption:W1-infty bound}
\\
  & 
  \| \hat{\mathsf N}^{k+1} - \hat{\mathsf N}^k \|_\infty \le C^\star \dt , \, \, \, 
  \| \hat{\mathsf P}^{k+1} - \hat{\mathsf P}^k \|_\infty \le C^\star  \dt , \quad \forall \, k \ge 0 .   
    \label{assumption:temporal bound}  
	\end{align}  
\end{prop}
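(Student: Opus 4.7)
The plan is to establish the higher order consistency by a careful Taylor expansion of each piece of the scheme, centered at the half-step $t^{m+\hf}$, and then to construct the correction fields ${\mathsf N}_{\dt,1}, {\mathsf N}_{\dt,2}, {\mathsf N}_{h,1}, {\mathsf P}_{\dt,1}, {\mathsf P}_{\dt,2}, {\mathsf P}_{h,1}$ to cancel the leading truncation errors order by order. Substituting the projection $({\mathsf N}_N, {\mathsf P}_N)$ into the scheme, the temporal differencing of $\partial_t {\mathsf N}$ by $({\mathsf N}_N^{m+1}-{\mathsf N}_N^m)/\dt$ and the Crank-Nicolson evaluations at the midpoint produce standard $O(\dt^2)$ leading error terms of the form $\dt^2 \mathsf{R}_{\dt}[{\mathsf N}, {\mathsf P}]$, while the centered finite differences introduce $O(h^2)$ truncation errors $h^2 \mathsf{R}_{h}[{\mathsf N},{\mathsf P}]$; both are smooth functionals of the exact solution and its derivatives.

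Two nonstandard ingredients require particular care. First, the modified Crank-Nicolson difference quotient $G^1_{{\mathsf N}_N^m}({\mathsf N}_N^{m+1}) = (F({\mathsf N}_N^{m+1})-F({\mathsf N}_N^m))/({\mathsf N}_N^{m+1}-{\mathsf N}_N^m)$ should be Taylor-expanded about ${\mathsf N}_N^{m+\hf}$ using the integral representation $\int_0^1 F'(s {\mathsf N}_N^{m+1}+(1-s){\mathsf N}_N^m)\,ds$; this yields $\ln {\mathsf N}_N^{m+\hf} + 1 + O(\dt^2)$, with the $O(\dt^2)$ residual a smooth function of ${\mathsf N}_N$ and its time derivatives (the apparent singularity at ${\mathsf N}_N^{m+1}={\mathsf N}_N^m$ is removable, as noted in the paper). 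Second, the artificial regularization $\dt(\ln {\mathsf N}_N^{m+1}-\ln {\mathsf N}_N^m)$ expands as $\dt^2 \partial_t \ln {\mathsf N}/{\mathsf N}^{m+\hf} + O(\dt^4)$, which feeds directly into the $O(\dt^2)$ correction equation. The mobility extrapolation $\frac32 {\mathsf N}_N^m - \frac12 {\mathsf N}_N^{m-1}$ agrees with ${\mathsf N}_N^{m+\hf}$ up to $O(\dt^2)$, and the $\dt^6$ regularization inside $\breve{\mathcal M}$ contributes at a much higher order. Collecting all these contributions yields a leading $O(\dt^2 + h^2)$ truncation in divergence form, plus a higher-order $O(\dt^4+h^4+\dt^2 h^2)$ remainder.

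With the leading error identified in the divergence form $\nabla\cdot(\text{stuff})$, I would define ${\mathsf N}_{\dt,1}$ and ${\mathsf P}_{\dt,1}$ as the solutions of a linear, coupled parabolic system at the PDE level whose right-hand sides cancel the $O(\dt^2)$ source and whose structure preserves the mean-zero property (this is automatic since the source is a divergence). The smoothness of the exact solution in class $\mathcal R$ guarantees $H^k$-regularity of these corrections for any desired $k$. Repeating the Taylor expansion of the scheme applied to ${\mathsf N}_N + \dt^2 {\mathsf N}_{\dt,1}$ produces a new residual whose leading temporal part is $O(\dt^3)$, which I would eliminate with ${\mathsf N}_{\dt,2}, {\mathsf P}_{\dt,2}$ solving an analogous linearized problem. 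The spatial correction ${\mathsf N}_{h,1}, {\mathsf P}_{h,1}$ is constructed in the same manner from the $O(h^2)$ source; its linear problem is independent of $\dt$ and is solvable because the exact solution has enough spatial derivatives. Mass conservation \eqref{consistency-14-1}--\eqref{consistency-14-2} follows from the divergence form of the correction equations and the initial-data alignment built into the construction, and the zero-mean truncation property \eqref{consistency-14-4} follows from the divergence structure of the remaining residual.

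The phase separation \eqref{assumption:separation-2} follows from \eqref{assumption:separation} by choosing $\dt, h$ small enough that $\|\dt^2 {\mathsf N}_{\dt,1}+\dt^3 {\mathsf N}_{\dt,2}+h^2 {\mathsf N}_{h,1}\|_\infty \le \epsilon_0/2$, giving $\epsilon_0^\star = \epsilon_0/2$; the $W^{1,\infty}$ bound \eqref{assumption:W1-infty bound} then follows from the uniform bounds on the exact solution in $\mathcal R$ combined with the inverse-type projection estimate, and \eqref{assumption:temporal bound} follows from the $H^1$-in-time regularity of ${\mathsf N}, {\mathsf P}$. The main obstacle I expect is the bookkeeping of the modified Crank-Nicolson difference quotient: one must show that after subtracting the pointwise value $\ln {\mathsf N}_N^{m+\hf}+1$, the remaining residual is a smooth functional of $({\mathsf N}_N^m, {\mathsf N}_N^{m+1})$ whose Taylor coefficients depend only on $\partial_t^j {\mathsf N}$ at $t^{m+\hf}$ and on inverse powers of ${\mathsf N}_N^{m+\hf}$, which is harmless under the separation assumption. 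Once this is handled cleanly, the rest of the construction is the standard higher-order asymptotic bootstrap carried out, e.g., in \cite{baskaran13b, guan17a, duan19c}.
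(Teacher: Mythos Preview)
Your proposal is correct and follows essentially the same approach as the paper's proof in Appendix~A: Taylor-expand the scheme about $t^{m+\hf}$ to isolate the leading $O(\dt^2)$ and $O(h^2)$ truncation terms, construct the correction fields as solutions of linearized parabolic systems (the paper's equations for ${\mathsf N}_{\dt,1}$, ${\mathsf N}_{\dt,2}$, ${\mathsf N}_{h,1}$ and their ${\mathsf P}$-counterparts) that cancel these terms, and bootstrap. One minor imprecision: the leading temporal truncation $G_n^{(0)}$ is not literally a divergence (the time-difference contribution $\tfrac{\dt^2}{24}\partial_t^3 {\mathsf N}$ is not), but it still has zero mean because $\overline{{\mathsf N}_N^{m+1}}=\overline{{\mathsf N}_N^m}$, which is all that is needed for the mass-conservation claims; the paper secures mass conservation of the corrections by prescribing trivial initial data and using this zero-mean property of the source.
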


\begin{rem} 
Because of the phase separation property~\eqref{assumption:separation-2} for the constructed functions $(\hat{\mathsf N}, \hat{\mathsf P})$, combined with their regularity in time, we conclude that, an explicit extrapolation formula in the mobility function in~\eqref{consistency-13-1}-\eqref{consistency-13-2},with coefficients $\frac32$, $-\frac12$ at time instants $g^m$, $t^{m-1}$, respectively, must give a positive mobility concentration values, which stand for a numerical approximation at time instant $t^{m+\hf}$. In turn, a positive regularization (as in~\eqref{mob ave-1}) could be avoided in the consistency analysis. 
\end{rem}

\subsection{A rough error estimate} 

A direct analysis for the error function defined in~\eqref{error function-1} would not give the desired bound. Instead, alternate numerical error functions are introduced:  
	\begin{equation}
\tilde{n}^m := \mathcal{P}_h \hat{\mathsf N}^m - n^m ,  \, \, \, 
\tilde{p}^m := \mathcal{P}_h \hat{\mathsf P}^m - p^m , \, \, \, 
\tilde{\phi}^m :=   (-\Delta_h)^{-1} ( \tilde{p}^m - \tilde{n}^m ) , 
\quad \forall \ m \in \mathbb{N} .
	\label{error function-2}
	\end{equation}
Of course, the advantage of this error function is associated with its higher order accuracy, implied by the higher order consistency estimate~\eqref{consistency-13-1} -- \eqref{consistency-13-2}. Similarly, the discrete norm $\nrm{ \, \cdot \, }_{-1,h}$ is well defined for the error grid function $(\tilde{n}^m, \tilde{p}^m)$, for any $m \ge 0$, because of the fact that $\overline{\tilde{n}^m} = \overline{\tilde{p}^m} = 0$ (given by~\eqref{consistency-14-1}-\eqref{consistency-14-2}). Moreover, the following average error functions at the intermediate time instant $t^{m+\hf}$ is also introduced, for the convenience of the notation: 
	\begin{equation} 
\begin{aligned} 
  & 
\tilde{n}^{m+\hf} := \frac12 ( \tilde{n}^{m+1} + \tilde{n}^m )  ,  \, \, \, 
\tilde{p}^{m+\hf} := \frac12 ( \tilde{p}^{m+1} + \tilde{p}^m ) , \, \, \, 
\tilde{\phi}^{m+\hf} :=   (-\Delta_h)^{-1} ( \tilde{p}^{m+\hf} - \tilde{n}^{m+\hf} ) , 
\\
  & 
  \breve{\tilde{n}}^{m+\hf} := \frac32 \tilde{n}^m   - \frac12 \tilde{n}^{m-1},  \, \, \, 
  \breve{\tilde{p}}^{m+\hf} := \frac32 \tilde{p}^m  - \frac12 \tilde{p}^{m-1} .   
\end{aligned} 
	\label{error function-3}
	\end{equation}
	

Subtracting the numerical solution~\eqref{scheme-PNP-2nd-1} -- \eqref{scheme-PNP-2nd-chem pot-p} from the consistency equations~\eqref{consistency-13-1} -- \eqref{consistency-13-2} gives
\begin{eqnarray} 
  \frac{\tilde{n}^{m+1} - \tilde{n}^m}{\dt} &=& 	
  \nabla_h \cdot \Big( 
       \breve{n}^{m+\hf}  \nabla_h \tilde{\mu}_n^{m+\hf} 
  +      \breve{\tilde{n}}^{m+\hf}  \nabla_h {\cal V}_n^{m+\hf}  \Big) 
  + \tau_n^{m+\hf} , \label{error equation-1} 
\\
  \frac{\tilde{p}^{m+1} - \tilde{p}^m}{\dt} &=& 	
  \nabla_h \cdot \Big( 
  D    \breve{p}^{m+\hf}  \nabla_h \tilde{\mu}_p^{m+\hf} 
  + D   \breve{\tilde{p}}^{m+\hf} \nabla_h {\cal V}_p^{m+\hf}  \Big) 
  + \tau_p^{m+\hf} , \label{error equation-2}   
	\end{eqnarray} 
where
	\begin{eqnarray} 
\tilde{\mu}_n^{m+\hf} &=&   G_{\hat{\mathsf N}^m}^1 ( \hat{\mathsf N}^{m+1} ) 
    - G_{n^m}^1 ( n^{m+1} ) 
+   (-\Delta_h)^{-1} ( \tilde{n}^{m+\hf} - \tilde{p}^{m+\hf}  )    \nonumber 
\\
  && 
    + \dt ( \ln \hat{\mathsf N}^{m+1} - \ln n^{m+1} 
    - \ln \hat{\mathsf N}^m + \ln n^m )  ,
	\label{error equation-3}   
	\\
{\cal V}_n^{m+\hf} &=& G_{\hat{\mathsf N}^m}^1 ( \hat{\mathsf N}^{m+1} ) +    
  (-\Delta_h)^{-1} ( \hat{\mathsf N}^{m+\hf} 
  - \hat{\mathsf P}^{m+\hf}  )    
    + \dt ( \ln \hat{\mathsf N}^{m+1} - \ln \hat{\mathsf N}^m )  ,  
	\label{error equation-4}  
	\\
\tilde{\mu}_p^{m+\hf} &=&   G_{\hat{\mathsf P}^m}^1 ( \hat{\mathsf P}^{m+1} ) 
    - G_{p^m}^1 ( p^{m+1} ) 
+   (-\Delta_h)^{-1} ( \tilde{p}^{m+\hf} - \tilde{n}^{m+\hf}  )   \nonumber 
\\
  && 
    + \dt ( \ln \hat{\mathsf P}^{m+1} - \ln p^{m+1} 
    - \ln \hat{\mathsf P}^m + \ln p^m )  ,
	\label{error equation-5}   
	\\
{\cal V}_p^{m+\hf} &=& G_{\hat{\mathsf P}^m}^1 ( \hat{\mathsf P}^{m+1} ) +    
  (-\Delta_h)^{-1} ( \hat{\mathsf P}^{m+\hf} 
  - \hat{\mathsf N}^{m+\hf}  )   
    + \dt ( \ln \hat{\mathsf P}^{m+1} - \ln \hat{\mathsf P}^m )  .   
	\label{error equation-6}   
	\end{eqnarray}

A discrete $W^{1,\infty}$ bound could be assumed for ${\cal V}_n^{m+\hf}$ and ${\cal V}_p^{m+\hf}$, due to the fact that they only depend on the exact solution and the constructed profiles:  
	\begin{equation} 
\|  {\cal V}_n^{m+\hf} \|_{W_h^{1,\infty}} , \ \|  {\cal V}_p^{m+\hf} \|_{W_h^{1,\infty}}  \le C^\star . 
	\label{assumption:W1-infty bound-2}  
	\end{equation} 
In addition, the following a-priori assumption is made, so that the nonlinear analysis could be accomplished by an induction argument: 
\begin{eqnarray} 
  \| \tilde{n}^k \|_2 , \, \| \tilde{p}^k \|_2 \le \dt^\frac{15}{4} + h^\frac{15}{4}  ,  \quad 
  k = m, m-1 .  
   \label{a priori-1} 
\end{eqnarray} 
This a-priori assumption will be recovered by the optimal rate convergence analysis at the next time step, as will be demonstrated later. As a consequence, an application of inverse inequality gives an $\ell^{\infty}$ bound for the numerical error function at the previous time steps: 
	\begin{eqnarray} 
\begin{aligned} 
& \| \tilde{n}^k \|_\infty  \le \frac{C \| \tilde{n}^k \|_2}{h^\frac32} \le   \frac{C ( \dt^\frac{15}{4} + h^\frac{15}{4} ) }{h^\frac32} \le  C ( \dt^\frac{9}{4} + h^\frac{9}{4} ) \le \frac{\epsilon_0^*}{8} ,
	\\
&  \| \tilde{p}^k \|_\infty  \le \frac{C \| \tilde{p}^k \|_2}{h^\frac32} \le   \frac{C ( \dt^\frac{15}{4} + h^\frac{15}{4} ) }{h^\frac32} \le  C ( \dt^\frac{9}{4} + h^\frac{9}{4} ) \le \frac{\epsilon_0^*}{8} ,  
\end{aligned} 
	\label{a priori-2}    
	\end{eqnarray}
for $k=m, m-1$, and the linear refinement constraint $\lambda_1 h \le \dt \le \lambda_2 h$ has been used. In turn, with the help of the regularity assumption~\eqref{assumption:W1-infty bound}, an $\ell^\infty$ bound for the numerical solution could be derived at the previous time steps: 
	\begin{eqnarray} 
	\begin{aligned} 
\| n^k \|_\infty \le& \| \hat{\mathsf N}^k \|_\infty +  \| \tilde{n}^k \|_\infty \le \tilde{C}_3 := C^\star +1 ,
  	\\
\| p^k \|_\infty \le& \| \hat{\mathsf P}^k \|_\infty +  \| \tilde{p}^k \|_\infty  \le   \tilde{C}_3 , \quad 
k=m, m-1 .  
        \end{aligned} 
	\label{a priori-5} 
	\end{eqnarray}           
Meanwhile, a combination of the $\ell^\infty$ estimate~\eqref{a priori-2} for the numerical error function and the separation estimate~\eqref{assumption:separation-2} results in a similar separation property for the numerical solution at the previous time steps: 
	\begin{equation} 
n^k \ge \hat{\mathsf N}^k -  \| \tilde{n}^k \|_\infty  \ge \frac{3 \epsilon_0^\star}{8} \, \, \,  \mbox{and} \quad  p^m \ge \hat{\mathsf P}^k - \| \tilde{p}^k \|_\infty  \ge \frac{3 \epsilon_0^\star}{8} , \quad k = m, m-1 . 
    \label{assumption:separation-3}
	\end{equation}  
Subsequently, at the intermediate time instant $t^{m+\hf}$, the following estimates could be derived: 
\begin{equation} 
\begin{aligned}  
    \frac32 \hat{\mathsf N}^m - \frac12  \hat{\mathsf N}^{m-1}   
    = & \frac12  ( \hat{\mathsf N}^{m+1} + \hat{\mathsf N}^m  )  + O (\dt^2) ,  \, \,  \, 
    \mbox{since} \, \,  \hat{\mathsf N}^{m+1} - 2 \hat{\mathsf N}^m 
    +  \hat{\mathsf N}^{m-1} = O (\dt^2) ,    
\\
   \frac12  ( \hat{\mathsf N}^{m+1} + \hat{\mathsf N}^m  )  
   =  & \hat{\mathsf N} ( t^{m+\hf} ) + O (\dt^2) , \, \, \, 
   \hat{\mathsf N} ( t^{m+\hf} )  \ge \epsilon_0^* ,  \, \, 
   \mbox{(by~\eqref{assumption:separation-2}) } ,   
\\
   \mbox{so that}  \quad &
   \frac32 \hat{\mathsf N}^m - \frac12  \hat{\mathsf N}^{m-1}    
   \ge  \epsilon_0^* - O (\dt^2) ,   
\\ 
   \| \frac32 \tilde{n}^m - \frac12  \tilde{n}^{m-1}  \|_\infty   
   \le  & C (\dt^\frac94 + h^\frac94 ) ,  \, \, 
   \mbox{(by~\eqref{a priori-2}) } ,    
\\
  \breve{n}^{m+\hf} = & \frac32 n^m - \frac12 n^{m-1} 
  = \frac32 \hat{\mathsf N}^m - \frac12  \hat{\mathsf N}^{m-1}   
  - (  \frac32 \tilde{n}^m - \frac12  \tilde{n}^{m-1} )  
\\
  \ge & 
    \epsilon_0^* - O (\dt^2)  -  O ( \dt^\frac94 + h^\frac94 ) 
  \ge \frac{\epsilon_0^*}{2} ,    
\\
  \breve{p}^{m+\hf} = & \frac32 p^m - \frac12 p^{m-1}  
  \ge \frac{\epsilon_0^*}{2} ,     \quad 
  \mbox{(similar analysis)} .             
\end{aligned} 
   \label{assumption:separation-4}
\end{equation}  
In other words, the phase separation property for the average mobility functions, $\breve{n}^{m+\hf}$,  $\breve{p}^{m+\hf} $, has also been established,  and this property will be useful in the later analysis.  

In particular,  the phase separation bound~\eqref{assumption:separation-4} reveals that, a positive regularization~\eqref{mob ave-1} could also be avoided for the numerical solution, provided that the a-priori error estimates~\eqref{a priori-1} are valid in the previous time steps. As a result, both the numerical mobility function and the mobility error function have been correctly represented in the error evolutionary system~\eqref{error equation-1}-\eqref{error equation-6}. 

Before proceeding into the error estimate, a very rough bound control of the nonlinear error inner products, namely, $\langle \tilde{n}^{m+1} , G_{\hat{\mathsf N}^m}^1 ( \hat{\mathsf N}^{m+1} ) - G_{n^m}^1 ( n^{m+1} )  \rangle$ and $\langle \tilde{p}^{m+1} , G_{\hat{\mathsf P}^m}^1 ( \hat{\mathsf P}^{m+1} ) - G_{p^m}^1 ( p^{m+1} )  \rangle$, is necessary. A preliminary estimate is stated in the following lemma; the detailed proof will be provided in Appendix~\ref{app: rough integral estimate}.  

\begin{lem}  \label{lem: rough integral estimate} 
The regularity requirement~\eqref{assumption:W1-infty bound}, and phase separation~\eqref{assumption:separation-2} assumptions are made for the constructed approximate solution $( \hat{\mathsf N}, \hat{\mathsf P})$, as well as the  a-priori assumption~\eqref{a priori-1} for the numerical solution at the previous time steps. In addition, we define the following sets: 
\begin{equation} 
  \Lambda_n = \left\{ ( i,j,k): n_{i,j,k}^{m+1} \ge 2 C^* +1 \right\} , \quad 
  \Lambda_p = \left\{ ( i,j,k): p_{i,j,k}^{m+1} \ge 2 C^* +1 \right\} , 
  \label{defi-Lambda-1} 
\end{equation} 
and denote $K_n^* := | \Lambda_n |$, $K_p^* := | \Lambda_p |$, the number of grid points in $\Lambda_n$ and $\Lambda_p$, respectively. Then we have a rough bound control of the following nonlinear inner products:   
\begin{eqnarray} 
\begin{aligned} 
  & 
   \langle \tilde{n}^{m+1} , G_{\hat{\mathsf N}^m}^1 ( \hat{\mathsf N}^{m+1} ) 
    - G_{n^m}^1 ( n^{m+1} )  \rangle 
\\
  &
    + \dt \langle \tilde{n}^{m+1} , \ln \hat{\mathsf N}^{m+1} - \ln n^{m+1}  
    - ( \ln \hat{\mathsf N}^m - \ln n^m ) \rangle    
  \ge  
  \frac12 C^* K_n^* h^3 - \tilde{C}_4 \| \tilde{n}^m \|_2^2 , 
\\
  & 
   \langle \tilde{p}^{m+1} , G_{\hat{\mathsf P}^m}^1 ( \hat{\mathsf P}^{m+1} ) 
    - G_{p^m}^1 ( p^{m+1} )  \rangle 
\\
  &
    + \dt \langle \tilde{p}^{m+1} , \ln \hat{\mathsf P}^{m+1} - \ln p^{m+1} 
    - ( \ln \hat{\mathsf P}^m - \ln p^m ) \rangle    
  \ge  
  \frac12 C^* K_p^* h^3 - \tilde{C}_4 \| \tilde{p}^m \|_2^2 ,   
\end{aligned}    
    \label{integral-rough-0} 
\end{eqnarray}  
in which $\tilde{C}_4$ is a constant only dependent on $\epsilon_0^*$ and $C^*$, independent of $\dt$ and $h$. In addition, if $K_n^* =0$ and $K_p^*=0$, i.e, both $\Lambda_n$ and $\Lambda_p$ are empty sets, we have an improved bound control: 
\begin{eqnarray} 
\begin{aligned} 
  & 
   \langle \tilde{n}^{m+1} , G_{\hat{\mathsf N}^m}^1 ( \hat{\mathsf N}^{m+1} ) 
    - G_{n^m}^1 ( n^{m+1} )  \rangle 
\\
  &
    + \dt ( \langle \tilde{n}^{m+1} , \ln \hat{\mathsf N}^{m+1} - \ln n^{m+1} \rangle 
    - \langle \tilde{n}^{m+1} , \ln \hat{\mathsf N}^m - \ln n^m \rangle )   
  \ge  
  \tilde{C}_5  \| \tilde{n}^{m+1} \|_2^2 - \tilde{C}_4 \| \tilde{n}^m \|_2^2 , 
\\
  & 
   \langle \tilde{p}^{m+1} , G_{\hat{\mathsf P}^m}^1 ( \hat{\mathsf P}^{m+1} ) 
    - G_{p^m}^1 ( p^{m+1} )  \rangle 
\\
  &
    + \dt ( \langle \tilde{p}^{m+1} , \ln \hat{\mathsf P}^{m+1} - \ln p^{m+1} \rangle 
    - \langle \tilde{p}^{m+1} , \ln \hat{\mathsf P}^m - \ln p^m \rangle )   
  \ge  
  \tilde{C}_5  \| \tilde{p}^{m+1} \|_2^2 - \tilde{C}_4 \| \tilde{p}^m \|_2^2 ,   
\end{aligned}    
    \label{integral-rough-1} 
\end{eqnarray}  
in which $\tilde{C}_5$ stands for another constant only dependent on $\epsilon_0^*$ and $C^*$.  
\end{lem}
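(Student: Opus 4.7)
The plan is to bound the inner products pointwise and split the spatial grid into the ``bad'' set $\Lambda_n$ (where $n^{m+1}$ is much larger than $\hat{\mathsf N}^{m+1}$) and its complement $\Lambda_n^c$. On $\Lambda_n^c$ every relevant argument is both bounded above by $2C^*+1$ and bounded below by the separation property \eqref{assumption:separation-3}, so the mean-value identity of Lemma~\ref{lem: G property}(iii) produces a positive quadratic lower bound. On $\Lambda_n$ both $\tilde n^{m+1}$ and $G^1_{\hat{\mathsf N}^m}(\hat{\mathsf N}^{m+1})-G^1_{n^m}(n^{m+1})$ are negative with controlled magnitudes, so the product contributes a linear-in-count amount $\sim K_n^* h^3$. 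The logarithmic regularization contributes a non-negative term at each grid point, which is what makes the whole bound work. The treatment of $\tilde p^{m+1}$ is identical by symmetry.

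The first key step is the decomposition
\begin{equation*}
G^1_{\hat{\mathsf N}^m}(\hat{\mathsf N}^{m+1}) - G^1_{n^m}(n^{m+1})
= \bigl[G^1_{\hat{\mathsf N}^m}(\hat{\mathsf N}^{m+1}) - G^1_{n^m}(\hat{\mathsf N}^{m+1})\bigr]
+ \bigl[G^1_{n^m}(\hat{\mathsf N}^{m+1}) - G^1_{n^m}(n^{m+1})\bigr].
\end{equation*}
For the second (argument-change) bracket, Lemma~\ref{lem: G property}(iii) gives $\tilde n^{m+1}/(2\xi)$ with $\xi$ between $\hat{\mathsf N}^{m+1}$ and $n^{m+1}$; on $\Lambda_n^c$ one has $\xi \le 2C^*+1$, so multiplying by $\tilde n^{m+1}$ yields at least $\tilde C_5(\tilde n^{m+1})^2$ with $\tilde C_5 := 1/(2(2C^*+1))$. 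For the first (parameter-change) bracket, the symmetry $G^1_a(x)=G^1_x(a)$ and Lemma~\ref{lem: G property}(iii) yield $\tilde n^m/(2\zeta)$ with $\zeta$ sandwiched between $\hat{\mathsf N}^{m+1}\ge\epsilon_0^*$ and a value in $[n^m,\hat{\mathsf N}^m]\subset[3\epsilon_0^*/8,\tilde C_3]$; thus $1/(2\zeta)$ is bounded uniformly in $h$, and Young's inequality absorbs this cross term into $\tfrac12\tilde C_5\|\tilde n^{m+1}\|_2^2 + \tilde C_4\|\tilde n^m\|_2^2$. On $\Lambda_n$, where $\hat{\mathsf N}^{m+1}\le C^*$ and $n^{m+1}\ge 2C^*+1$, we have $\tilde n^{m+1}\le -(C^*+1)$; monotonicity of $G^1$ together with the asymptotic $G^1_a(x)\sim\ln x$ (so $G^1_{n^m}(2C^*+1)-G^1_{\hat{\mathsf N}^m}(C^*)\ge \alpha>0$ for some constant depending only on $\epsilon_0^*$ and $C^*$) gives $G^1_{\hat{\mathsf N}^m}(\hat{\mathsf N}^{m+1})-G^1_{n^m}(n^{m+1})\le -\alpha$, so the pointwise product is at least $\alpha(C^*+1)\ge\tfrac12 C^*$, summing to $\ge\tfrac12 C^* K_n^* h^3$.

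The logarithmic regularization is handled by a sign observation. The log-MVT gives $\tilde n^{m+1}(\ln\hat{\mathsf N}^{m+1}-\ln n^{m+1})=(\tilde n^{m+1})^2/\tilde\eta\ge 0$ at every grid point, so $\dt\langle\tilde n^{m+1},\ln\hat{\mathsf N}^{m+1}-\ln n^{m+1}\rangle$ is non-negative and can be discarded from the lower bound. The complementary term $\dt\langle\tilde n^{m+1},\ln\hat{\mathsf N}^m-\ln n^m\rangle$ is controlled by $\dt C\|\tilde n^{m+1}\|_2\|\tilde n^m\|_2$ using the log-MVT with denominator $\ge 3\epsilon_0^*/8$, and is absorbed into the constants for $\dt$ small. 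Combining the three regimes delivers \eqref{integral-rough-0}; when $K_n^*=K_p^*=0$ the quadratic term $\tilde C_5\|\tilde n^{m+1}\|_2^2$ from $\Lambda_n^c$ survives unspoiled and yields the improved bound \eqref{integral-rough-1}. The main obstacle is conceptual rather than technical: $n^{m+1}$ has no uniform-in-$h$ lower bound from Theorem~\ref{PNP-2nd-positivity}, so the denominator $\xi$ in the argument-change bracket could in principle become tiny on $\Lambda_n^c$; the rescue is that small $\xi$ \emph{enlarges} $1/(2\xi)$ and thus only strengthens the positive lower bound, while the uniform constant $\tilde C_5$ is realized at the upper extreme $\xi=2C^*+1$. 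The single place a lower bound on a denominator is truly needed---the parameter-change bracket---is furnished by the separation estimates \eqref{assumption:separation-2} and \eqref{assumption:separation-3}, which are $h$-independent.
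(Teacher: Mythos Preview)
Your decomposition via the symmetry $G^1_a(x)=G^1_x(a)$, your handling of the logarithmic terms, and your $\Lambda_n^c$ analysis all match the paper's proof in Appendix~\ref{app: rough integral estimate}. The gap is on $\Lambda_n$. The claimed inequality $G^1_{n^m}(2C^*+1)-G^1_{\hat{\mathsf N}^m}(C^*)\ge\alpha>0$ does \emph{not} follow from ``the asymptotic $G^1_a(x)\sim\ln x$'': if $n^m$ and $\hat{\mathsf N}^m$ were allowed to range independently over their admissible intervals (take $n^m$ near $3\epsilon_0^*/8$ and $\hat{\mathsf N}^m=C^*$), the left side can be negative, since $G^1_{n^m}(2C^*+1)\approx\ln(2C^*+1)$ while $G^1_{C^*}(C^*)=\ln C^*+1$, and $\ln 2-1<0$. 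What actually fixes the sign is the a-priori pointwise smallness $|\tilde n^m|\le C(\dt^{9/4}+h^{9/4})$ from \eqref{a priori-2}, which forces $n^m\approx\hat{\mathsf N}^m$ and makes the parameter discrepancy an $O(|\tilde n^m|)$ perturbation; you never invoke this in your $\Lambda_n$ argument. The same pointwise smallness of $\tilde n^m$ is also the only mechanism for absorbing the cross terms (from the parameter-change bracket and from the $\dt\langle\tilde n^{m+1},\ln\hat{\mathsf N}^m-\ln n^m\rangle$ term) on $\Lambda_n$, where $|\tilde n^{m+1}|$ has no upper bound and your Young-inequality absorption into $\tfrac12\tilde C_5\|\tilde n^{m+1}\|_2^2$ is unavailable because no such budget appears in \eqref{integral-rough-0}.

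The paper's route on $\Lambda_n$ is more direct: it \emph{keeps} the decomposition there and extracts the positive quadratic $\frac{1}{2\xi_n^{(m+1)}}(\tilde n^{m+1})^2$ from the argument-change bracket. On $\Lambda_n$ one has $\xi_n^{(m+1)}\le n^{m+1}$ (since $n^{m+1}$ dominates the three candidate values $n^m,\hat{\mathsf N}^{m+1},n^{m+1}$) and $|\tilde n^{m+1}|\ge\frac{C^*+1}{2C^*+1}\,n^{m+1}$, so $\frac{1}{2\xi_n^{(m+1)}}(\tilde n^{m+1})^2\ge\frac{C^*+1}{2(2C^*+1)}\,|\tilde n^{m+1}|$. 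All cross terms are then bounded by $\frac{C}{\epsilon_0^*}|\tilde n^m|\,|\tilde n^{m+1}|\le C\dt^{9/4}|\tilde n^{m+1}|$ via \eqref{a priori-2} and are dominated by the positive linear term, leaving a per-grid-point contribution $\ge c\,|\tilde n^{m+1}|\ge c(C^*+1)$. (One minor inaccuracy in your write-up: Lemma~\ref{lem: G property}(iii) places the intermediate point $\xi$ between the \emph{base} $n^m$ and $\eta$, hence in the convex hull of $\{n^m,\hat{\mathsf N}^{m+1},n^{m+1}\}$, not just between $\hat{\mathsf N}^{m+1}$ and $n^{m+1}$; this does not affect your $\Lambda_n^c$ upper bound $\xi\le 2C^*+1$.)
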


The following proposition states the rough error estimate result. 

\begin{prop}  \label{prop: rough estimate} 
For the numerical error evolutionary system~\eqref{error equation-1}-\eqref{error equation-6}, we make the regularity assumption~\eqref{assumption:W1-infty bound-2} for the constructed profiles ${\cal V}_n^{m+\hf}$, ${\cal V}_p^{m+\hf}$, as well as the a-priori assumption~\eqref{a priori-1} for the numerical solution at the previous time steps. Then we have a rough error estimate  
\begin{eqnarray} 
  \| \tilde{n}^{m+1}  \|_2  + \| \tilde{p}^{m+1}  \|_2    
  \le  \dt^3 + h^3 .  \label{convergence-rough-0} 
\end{eqnarray}  
\end{prop}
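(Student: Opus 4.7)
The plan is an energy-type error analysis adapted to the $H^{-1}$ gradient-flow structure. I first take the discrete $L_h^2$ inner product of~\eqref{error equation-1} with $\dt\,\tilde{\mu}_n^{m+\hf}$ and of~\eqref{error equation-2} with $\dt\,\tilde{\mu}_p^{m+\hf}$, and sum. After summation by parts via Lemma~\ref{lemma1}, the principal diffusive contributions become $\dt\langle\breve{n}^{m+\hf}\nabla_h\tilde{\mu}_n^{m+\hf},\nabla_h\tilde{\mu}_n^{m+\hf}\rangle + D\dt\langle\breve{p}^{m+\hf}\nabla_h\tilde{\mu}_p^{m+\hf},\nabla_h\tilde{\mu}_p^{m+\hf}\rangle$; thanks to the phase-separation bound $\breve{n}^{m+\hf},\breve{p}^{m+\hf}\ge \epsilon_0^\star/2$ from~\eqref{assumption:separation-4}, these are coercive of order $\epsilon_0^\star\dt(\|\nabla_h\tilde{\mu}_n^{m+\hf}\|_2^2 + D\|\nabla_h\tilde{\mu}_p^{m+\hf}\|_2^2)$.

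On the left-hand side, expanding $\tilde{\mu}_n^{m+\hf}$ and $\tilde{\mu}_p^{m+\hf}$ via~\eqref{error equation-3} and~\eqref{error equation-5}, the $(-\Delta_h)^{-1}$-coupling pieces from the two equations combine into the perfect telescoping
\begin{equation*}
\tfrac{1}{2}\bigl(\|\tilde{n}^{m+1}-\tilde{p}^{m+1}\|_{-1,h}^2 - \|\tilde{n}^m-\tilde{p}^m\|_{-1,h}^2\bigr),
\end{equation*}
which is the desired $H^{-1}$-energy increment. The remaining nonlinear $G^1$-plus-$\dt\ln$ contributions, $\langle\tilde{n}^{m+1}-\tilde{n}^m, G_{\hat{\mathsf N}^m}^1(\hat{\mathsf N}^{m+1}) - G_{n^m}^1(n^{m+1}) + \dt(\ln\hat{\mathsf N}^{m+1}-\ln n^{m+1}-\ln\hat{\mathsf N}^m+\ln n^m)\rangle$ together with its $p$-analog, I split as $\langle\tilde{n}^{m+1},\cdot\rangle-\langle\tilde{n}^m,\cdot\rangle$; the first piece is bounded from below by Lemma~\ref{lem: rough integral estimate}, and the second is controlled using smoothness of $G_a^1$ and $\ln$ on the phase-separated region~\eqref{assumption:separation-3} together with the $L_h^\infty$ control~\eqref{a priori-2}.

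On the right-hand side, the mixed mobility-error terms $\dt\langle\breve{\tilde{n}}^{m+\hf}\nabla_h{\cal V}_n^{m+\hf},\nabla_h\tilde{\mu}_n^{m+\hf}\rangle$ (and the $p$-analog) are absorbed by weighted Cauchy--Schwarz against the dissipation using~\eqref{assumption:W1-infty bound-2}, leaving residuals $C\dt(\|\tilde{n}^m\|_2^2+\|\tilde{n}^{m-1}\|_2^2)$. The consistency contributions $\dt\langle\tau_n^{m+\hf},\tilde{\mu}_n^{m+\hf}\rangle$ are controlled using $\overline{\tau_n^{m+\hf}}=0$ and discrete Poincar\'e, giving $|\langle\tau_n^{m+\hf},\tilde{\mu}_n^{m+\hf}\rangle| \le C\|\tau_n^{m+\hf}\|_2\|\nabla_h\tilde{\mu}_n^{m+\hf}\|_2$; combining $\|\tau_n^{m+\hf}\|_2\le C(\dt^4+h^4)$ from Proposition~\ref{prop:consistency} with absorption of a fraction of the dissipation produces a per-step perturbation $C\dt(\dt^4+h^4)^2$.

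The main obstacle is the case distinction in Lemma~\ref{lem: rough integral estimate}: in the general case, the lower bound $\tfrac{1}{2}C^*(K_n^*+K_p^*)h^3$ does not directly control $\|\tilde{n}^{m+1}\|_2^2+\|\tilde{p}^{m+1}\|_2^2$, so discrete Gronwall cannot be closed. I handle this by a dichotomy: if $\Lambda_n\cup\Lambda_p\neq\emptyset$, then $\tfrac{1}{2}C^*h^3$ appears on the good side of the energy inequality while the RHS perturbations are only $C\dt(\dt^4+h^4)^2 + C\dt(\|\tilde{n}^{m}\|_2^2+\|\tilde{n}^{m-1}\|_2^2)$, which under the linear refinement $\dt\sim h$ and the a-priori~\eqref{a priori-1} forces $\tfrac{1}{2}C^*h^3 \le C(h^9+h^{17/2})$, a contradiction for sufficiently small $h$. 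Hence $\Lambda_n=\Lambda_p=\emptyset$, the improved bound $\tilde{C}_5(\|\tilde{n}^{m+1}\|_2^2+\|\tilde{p}^{m+1}\|_2^2)-\tilde{C}_4(\|\tilde{n}^m\|_2^2+\|\tilde{p}^m\|_2^2)$ of Lemma~\ref{lem: rough integral estimate} applies, and a discrete Gronwall inequality yields the rough estimate $\|\tilde{n}^{m+1}\|_2+\|\tilde{p}^{m+1}\|_2\le \dt^3+h^3$.
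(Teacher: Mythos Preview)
Your overall strategy—testing against $\tilde{\mu}_n^{m+\hf}$, $\tilde{\mu}_p^{m+\hf}$, using the coercivity from~\eqref{assumption:separation-4}, and running the dichotomy on $\Lambda_n,\Lambda_p$ via Lemma~\ref{lem: rough integral estimate}—matches the paper. The gap is in your treatment of the split $\langle\tilde{n}^{m+1}-\tilde{n}^m,\,\cdot\,\rangle=\langle\tilde{n}^{m+1},\,\cdot\,\rangle-\langle\tilde{n}^m,\,\cdot\,\rangle$ for the $G^1$-plus-$\dt\ln$ part. You assert that the piece $\langle\tilde{n}^m,\,G_{\hat{\mathsf N}^m}^1(\hat{\mathsf N}^{m+1})-G_{n^m}^1(n^{m+1})+\dt(\ln\hat{\mathsf N}^{m+1}-\ln n^{m+1})-\dt(\ln\hat{\mathsf N}^m-\ln n^m)\rangle$ is controlled by smoothness on the phase-separated region~\eqref{assumption:separation-3} and~\eqref{a priori-2}. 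But those bounds cover only $n^m,n^{m-1}$; the expression contains $G_{n^m}^1(n^{m+1})$ and $\ln n^{m+1}$, and at this stage $n^{m+1}$ is merely positive—no lower bound away from zero, no upper bound. Concretely, the mean-value representation gives $G_{n^m}^1(\hat{\mathsf N}^{m+1})-G_{n^m}^1(n^{m+1})=\tfrac{1}{2\xi_n^{(m+1)}}\tilde{n}^{m+1}$ with $\xi_n^{(m+1)}$ between $n^m$, $\hat{\mathsf N}^{m+1}$, $n^{m+1}$; if $n^{m+1}$ is tiny at some grid point then $\xi_n^{(m+1)}$ can be tiny and $1/\xi_n^{(m+1)}$ is uncontrolled. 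The same occurs for $\ln\hat{\mathsf N}^{m+1}-\ln n^{m+1}=\tfrac{1}{\zeta_n^{(m+1)}}\tilde{n}^{m+1}$. So a pointwise bound on this piece is unavailable and the splitting as written does not close.

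The paper avoids this by \emph{not} isolating the nonlinear part: it keeps $\langle\tilde{n}^{m+1},\tilde{\mu}_n^{m+\hf}\rangle$ on the left and moves the entire $\langle\tilde{n}^m,\tilde{\mu}_n^{m+\hf}\rangle$ to the right, absorbing it wholesale into the dissipation via the duality bound
\[
|\langle\tilde{n}^m,\tilde{\mu}_n^{m+\hf}\rangle|\le\|\tilde{n}^m\|_{-1,h}\,\|\nabla_h\tilde{\mu}_n^{m+\hf}\|_2
\]
(valid since $\overline{\tilde{n}^m}=0$). The cost is a term $\tfrac{2}{\epsilon_0^\star\dt}\|\tilde{n}^m\|_{-1,h}^2$ on the right, which under~\eqref{a priori-1} is $O(\dt^{13/2}+h^{13/2})$ and in fact dominates all other residuals. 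Correspondingly, the $(-\Delta_h)^{-1}$ coupling is paired with $\tilde{n}^{m+1}$, $\tilde{p}^{m+1}$ alone, yielding only the inequality $\ge\tfrac14(\|\tilde{n}^{m+1}-\tilde{p}^{m+1}\|_{-1,h}^2-\|\tilde{n}^m-\tilde{p}^m\|_{-1,h}^2)$ rather than your perfect telescoping. Finally, no Gronwall is invoked: once $K_n^*=K_p^*=0$ is established, the improved bound in Lemma~\ref{lem: rough integral estimate} gives $\tilde{C}_5(\|\tilde{n}^{m+1}\|_2^2+\|\tilde{p}^{m+1}\|_2^2)\le C(\dt^{13/2}+h^{13/2})$ in a single step.
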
 

\begin{proof} 
A discrete inner product with~\eqref{error equation-1}, \eqref{error equation-2} by $\tilde{\mu}_n^{m+\hf}$, $\tilde{\mu}_p^{m+\hf}$, respectively, results in   
	\begin{align} 
	  &
 \frac{1}{\dt} ( \langle \tilde{n}^{m+1} , \tilde{\mu}_n^{m+\hf} \rangle   + \langle \tilde{p}^{m+1} , \tilde{\mu}_p^{m+\hf} \rangle )   \nonumber 
  \\
   & 
   +  ( \langle   ( \breve{n}^{m+\hf} )  \nabla_h \tilde{\mu}_n^{m+\hf} ,  \nabla_h \tilde{\mu}_n^{m+\hf}  \rangle 
  + D  \langle   ( \breve{p}^{m+\hf} )  \nabla_h \tilde{\mu}_p^{m+\hf} , 
  \nabla_h \tilde{\mu}_p^{m+\hf}  \rangle  )
	\nonumber 
	\\
& = \frac{1}{\dt} ( \langle \tilde{n}^m , \tilde{\mu}_n^{m+\hf} \rangle 
  + \langle \tilde{p}^m , \tilde{\mu}_p^{m+\hf} \rangle )  
  +  ( \langle \tau_n^{m+\hf} , \tilde{\mu}_n^{m+\hf} \rangle 
  + \langle \tau_p^{m+\hf} , \tilde{\mu}_p^{m+\hf} \rangle ) \nonumber 
\\
& \quad
  -  (   \langle   ( \breve{n}^{m+\hf} )  \nabla_h {\cal V}_n^{m+\hf} ,  
    \nabla_h \tilde{\mu}_n^{m+\hf}  \rangle   
  + D \langle   ( \breve{p}^{m+\hf} )  \nabla_h {\cal V}_p^{m+\hf} ,  
    \nabla_h \tilde{\mu}_p^{m+\hf}  \rangle  )  .  
  	\label{convergence-rough-1} 
	\end{align} 
The separation estimate~\eqref{assumption:separation-4} for the average mobility functions, $\breve{n}^{m+\hf}$, $\breve{p}^{m+\hf}$ implies the following inequalities:  
\begin{equation}  
	\begin{aligned}   
\langle   ( \breve{n}^{m+\hf})  \nabla_h \tilde{\mu}_n^{m+\hf} ,  \nabla_h \tilde{\mu}_n^{m+\hf}  \rangle &\ge   \frac{\epsilon_0^\star}{2}  \| \nabla_h \tilde{\mu}_n^{m+\hf} \|_2^2 , 
	\\
\langle   ( \breve{p}^{m+\hf} )  \nabla_h \tilde{\mu}_p^{m+\hf} ,  \nabla_h \tilde{\mu}_p^{m+\hf}  \rangle &\ge   \frac{\epsilon_0^\star}{2}  \| \nabla_h \tilde{\mu}_p^{m+\hf} \|_2^2 .  
          \end{aligned} 
    \label{convergence-rough-2}       
\end{equation}  
Meanwhile, the following estimate is available, based on the fact that the local truncation error terms are mean-free, as given by~\eqref{consistency-14-4}:   
\begin{equation} 
	\begin{aligned} 
\langle \tau_n^{m+\hf} , \tilde{\mu}_n^{m+\hf} \rangle  \le& \| \tau_n^{m+\hf} \|_{-1,h} \cdot \| \nabla_h \tilde{\mu}_n^{m+\hf} \|_2  \le   \frac{2}{  \epsilon_0^\star} \| \tau_n^{m+\hf} \|_{-1,h}^2  + \frac18   \epsilon_0^\star \| \nabla_h \tilde{\mu}_n^{m+\hf} \|_2^2 ,  
	\\
\langle \tau_p^{m+\hf} , \tilde{\mu}_p^{m+\hf} \rangle  \le& \| \tau_p^{m+\hf} \|_{-1,h} \cdot \| \nabla_h \tilde{\mu}_p^{m+\hf} \|_2 \le   \frac{2}{D \epsilon_0^\star}\| \tau_p^{m+\hf} \|_{-1,h}^2 + \frac18 D \epsilon_0^\star \| \nabla_h \tilde{\mu}_p^{m+\hf} \|_2^2 .   
        \end{aligned} 
	\label{convergence-rough-3}   
\end{equation}  
The terms $\langle \tilde{n}^m , \tilde{\mu}_n^{m+\hf} \rangle$ and 
$\langle \tilde{p}^m , \tilde{\mu}_p^{m+\hf} \rangle$ could be bounded in a straightforward way: 
\begin{equation} 
	\begin{aligned} 
\langle \tilde{n}^m , \tilde{\mu}_n^{m+\hf} \rangle \le&  \| \tilde{n}^m \|_{-1,h} \cdot \| \nabla_h \tilde{\mu}_n^{m+\hf} \|_2 
  \le  \frac{2}{  \epsilon_0^\star \dt} \| \tilde{n}^m \|_{-1,h}^2 
  + \frac18   \epsilon_0^\star \dt \| \nabla_h \tilde{\mu}_n^{m+\hf} \|_2^2  ,  
	\\
\langle \tilde{p}^m , \tilde{\mu}_p^{m+\hf} \rangle \le&  \| \tilde{p}^m \|_{-1,h} \cdot \| \nabla_h \tilde{\mu}_p^{m+\hf} \|_2 
  \le  \frac{2}{D \epsilon_0^\star \dt} \| \tilde{p}^m \|_{-1,h}^2 
  + \frac18 D \epsilon_0^\star \dt \| \nabla_h \tilde{\mu}_p^{m+\hf} \|_2^2  .  
        \end{aligned}  
	\label{convergence-rough-4}   
\end{equation}    
The last two terms on the right hand side of~\eqref{convergence-rough-1} could be analyzed in a similar way:  
\begin{equation} 
	\begin{aligned}  
  -    \langle   ( \breve{n}^{m+\hf} )  \nabla_h {\cal V}_n^{m+\hf} ,  
    \nabla_h \tilde{\mu}_n^{m+\hf}  \rangle   
  \le&   \| \nabla_h {\cal V}_n^{m+\hf}  \|_\infty 
  \cdot \|   ( \breve{n}^{m+\hf} )  \|_2 \cdot  \| \nabla_h \tilde{\mu}_n^{m+\hf}  \|_2  
\\
  \le&
       C^\star    \| \breve{n}^{m+\hf}  \|_2 
  \cdot  \| \nabla_h \tilde{\mu}_n^{m+\hf}  \|_2    
\\
  \le& 
  \frac{2 (C^\star)^2   }{\epsilon_0^\star}  \|  \breve{n}^{m+\hf}  \|_2^2 
  + \frac18   \epsilon_0^\star \| \nabla_h \tilde{\mu}_n^{m+\hf} \|_2^2  , 
\\
-  D \langle   ( \breve{p}^{m+\hf} )  \nabla_h {\cal V}_p^{m+\hf} , \nabla_h \tilde{\mu}_p^{m+\hf}  \rangle \le & \frac{2 (C^\star)^2 D }{\epsilon_0^\star}  \| \breve{p}^{m+\hf}  \|_2^2  
 + \frac18 D \epsilon_0^\star \| \nabla_h \tilde{\mu}_p^{m+\hf} \|_2^2  .   
        \end{aligned} 
	\label{convergence-rough-5}   
\end{equation} 
Therefore, a substitution of~\eqref{convergence-rough-2}-\eqref{convergence-rough-5} into~\eqref{convergence-rough-1} yields   
	\begin{align}  
	 & 
\langle \tilde{n}^{m+1} , \tilde{\mu}_n^{m+\hf} \rangle 
 + \langle \tilde{p}^{m+1} , \tilde{\mu}_p^{m+\hf} \rangle  
  + \frac{\epsilon_0^\star}{8}  \dt (   \| \nabla_h \tilde{\mu}_n^{m+\hf} \|_2^2 
  + D  \| \nabla_h \tilde{\mu}_p^{m+\hf} \|_2^2 )
	\nonumber 
	\\
&\le  \frac{2}{  \epsilon_0^\star \dt} \| \tilde{n}^m \|_{-1,h}^2 
  + \frac{2}{D \epsilon_0^\star \dt} \| \tilde{p}^m \|_{-1,h}^2  
  + \frac{2 \dt}{  \epsilon_0^\star} \| \tau_n^{m+\hf} \|_{-1,h}^2 
  + \frac{2 \dt}{D \epsilon_0^\star}  \| \tau_p^{m+\hf} \|_{-1,h}^2
	\nonumber 
	\\
& \quad + 2 (C^\star)^2 (\epsilon_0^\star )^{-1} 
  \dt (  \|  \breve{n}^{m+\hf}  \|_2^2 + D^{-1}  \|  \breve{p}^{m+\hf}  \|_2^2  ) . 
	\label{convergence-rough-6} 
	\end{align} 
On the other hand, the detailed expansion in~\eqref{error equation-3} gives 
\begin{align} 
 \langle \tilde{n}^{m+1} ,  \tilde{\mu}_n^{m+\hf} \rangle 
  =&   \langle \tilde{n}^{m+1} , G_{\hat{\mathsf N}^m}^1 ( \hat{\mathsf N}^{m+1} ) 
    - G_{n^m}^1 ( n^{m+1} )  \rangle 
+   \langle \tilde{n}^{m+1} ,  
 (-\Delta_h)^{-1} ( \tilde{n}^{m+\hf} - \tilde{p}^{m+\hf}  )  \rangle   \nonumber 
\\
  & 
    + \dt ( \langle \tilde{n}^{m+1} , \ln \hat{\mathsf N}^{m+1} - \ln n^{m+1} \rangle 
    - \langle \tilde{n}^{m+1} , \ln \hat{\mathsf N}^m - \ln n^m \rangle )  \nonumber 
\\
  \ge &   
  \frac12 C^* K_n^* h^3 - \tilde{C}_4 \| \tilde{n}^m \|_2^2
  +   \langle \tilde{n}^{m+1} ,  
 (-\Delta_h)^{-1} ( \tilde{n}^{m+\hf} - \tilde{p}^{m+\hf}  )  \rangle , 
    \label{convergence-rough-7-1}       
\end{align} 
in which the rough bound control~\eqref{integral-rough-0} (in Lemma~\ref{lem: rough integral estimate}) has been applied. A similar inequality could be derived for $\langle \tilde{p}^{m+1} ,  \tilde{\mu}_p^{m+\hf} \rangle $: 
\begin{align} 
 \langle \tilde{p}^{m+1} ,  \tilde{\mu}_p^{m+\hf} \rangle 
  \ge    
  \frac12 C^* K_p^* h^3 - \tilde{C}_4 \| \tilde{p}^m \|_2^2
  +   \langle \tilde{p}^{m+1} ,  
 (-\Delta_h)^{-1} ( \tilde{p}^{m+\hf} - \tilde{m}^{m+\hf}  )  \rangle . 
    \label{convergence-rough-7-2}       
\end{align}     
Meanwhile, we make the following observation:     
	\begin{eqnarray}
\begin{aligned} 
  & 
  \langle \tilde{n}^{m+1} ,  
 (-\Delta_h)^{-1} ( \tilde{n}^{m+\hf} - \tilde{p}^{m+\hf}  )  \rangle 
 + \langle \tilde{p}^{m+1} ,  
 (-\Delta_h)^{-1} ( \tilde{p}^{m+\hf} - \tilde{n}^{m+\hf}  )  \rangle 
\\ 
  = & 
   \frac12 \langle (-\Delta_h)^{-1} ( \tilde{n}^{m+1} - \tilde{p}^{m+1} 
 + \tilde{n}^m - \tilde{p}^m ) , 
    \tilde{n}^{m+1} - \tilde{p}^{m+1} \rangle 
\\
  \ge 
  & \frac14 ( \|  \tilde{n}^{m+1} - \tilde{p}^{m+1} \|_{-1,h}^2 
      - \|  \tilde{n}^m - \tilde{p}^m \|_{-1,h}^2  )   .  
    \label{convergence-rough-7-3}  
\end{aligned}       
	\end{eqnarray} 
 Then we conclude that   
	\begin{equation} 
\langle \tilde{n}^{m+1} , \tilde{\mu}_n^{m+\hf} \rangle  
+ \langle \tilde{p}^{m+\hf} , \tilde{\mu}_p^{m+1} \rangle  
  \ge  \frac12 C^* ( K_n^* + K_p^* ) h^3 
  - \tilde{C}_4 ( \| \tilde{n}^m \|_2^2 +  \| \tilde{p}^m \|_2^2 ) 
  - \frac14 \|  \tilde{n}^m - \tilde{p}^m \|_{-1,h}^2 .  
	\label{convergence-rough-7-4}        
	\end{equation}  
For the right hand side of~\eqref{convergence-rough-6}, the following estimates could be derived, with the help of a-priori assumption~\eqref{a priori-1}: 
	\begin{eqnarray} 
\begin{aligned}  
  &
 ( \frac{2}{  \epsilon_0^\star \dt} + 1 ) \| \tilde{n}^m \|_{-1,h}^2  
 + ( \frac{2}{D \epsilon_0^\star \dt} + 1 ) \| \tilde{p}^m \|_{-1,h}^2  
 \le \frac{C}{  \epsilon_0^\star \dt} ( \| \tilde{n}^m \|_2^2  
 + \| \tilde{p}^m \|_2^2 ) \le C ( \dt^\frac{13}{2} + h^\frac{13}{2} ) ,    
	\\
   &
\frac{2 \dt}{  \epsilon_0^\star} \| \tau_n^{m+\hf} \|_{-1,h}^2 
 + \frac{2 \dt}{D \epsilon_0^\star}  \| \tau_p^{m+\hf} \|_{-1,h}^2 
\le C ( \dt^9 + \dt h^8) ,   
	\\
  & 
2 (C^\star)^2 (\epsilon_0^\star )^{-1} \dt  \|  \breve{n}^{m+\hf}  \|_2^2 
  \le C \dt  ( \|  \tilde{n}^m  \|_2^2 + \| \tilde{n}^{m-1} \|_2^2 ) 
  \le C   ( \dt^\frac{17}{2} + h^\frac{17}{2} ) ,
	\\
   &
2 (C^\star)^2 (\epsilon_0^\star )^{-1}  D^{-1}  \dt \|  \breve{p}^{m+\hf}  \|_2^2  
\le  C   ( \dt^\frac{17}{2} + h^\frac{17}{2} ) ,   
	\label{convergence-rough-8-6}      
	\\
   & 
  \tilde{C}_4  ( \|  \tilde{n}^m  \|_2^2 + \| \tilde{p}^m \|_2^2 )  
   \le  C   ( \dt^\frac{15}{2} + h^\frac{15}{2} ) ,  \quad 
   \frac14 \|  \tilde{n}^m - \tilde{p}^m \|_{-1,h}^2 \le   C   ( \dt^\frac{15}{2} + h^\frac{15}{2} ) , 
\end{aligned} 
   \label{convergence-rough-8}   
	\end{eqnarray}
in which the inequality that $\| f \|_{-1,h} \le C \| f \|_2$, as well as the linear refinement constraint $\lambda_1 h \le \dt \le \lambda_2 h$, have been repeatedly applied. Its combination with~\eqref{convergence-rough-6} leads to 
\begin{equation} 
   \frac12 C^* ( K_n^* + K_p^* ) h^3  \le  C ( \dt^\frac{13}{2} + h^\frac{13}{2} ) . 
       \label{convergence-rough-8-2}   
\end{equation}
If $K_n^* \ge 1$ or $K_p^* \ge 1$, this inequality will make a contradiction, provided that $\dt$ and $h$ are sufficiently small. Therefore, we conclude that $K_n^* =0$ and $K_p^* =0$, i.e., both $\Lambda_n$ and $\Lambda_p$ are empty sets, so that we are able to apply an improved bound control~\eqref{integral-rough-1} (in Lemma~\ref{lem: rough integral estimate}). In turn, an improved estimate becomes available:    
	\begin{equation} 
	  \begin{aligned} 
\langle \tilde{n}^{m+1} , \tilde{\mu}_n^{m+\hf} \rangle  
+ \langle \tilde{p}^{m+\hf} , \tilde{\mu}_p^{m+1} \rangle  
  \ge  & \tilde{C}_5 ( \| \tilde{n}^{m+1} \|_2^2 +  \| \tilde{p}^{m+1} \|_2^2 )   
  - \tilde{C}_4 ( \| \tilde{n}^m \|_2^2 +  \| \tilde{p}^m \|_2^2 )  
\\
  & 
  - \frac14 \|  \tilde{n}^m - \tilde{p}^m \|_{-1,h}^2 .  
        \end{aligned} 
	\label{convergence-rough-8-3}        
	\end{equation}  
Its combination with~\eqref{convergence-rough-8} and \eqref{convergence-rough-6} reveals that 
\begin{equation} 
\begin{aligned} 
  & 
   \tilde{C}_5 ( \| \tilde{n}^{m+1} \|_2^2 +  \| \tilde{p}^{m+1} \|_2^2 )  
    \le  C ( \dt^\frac{13}{2} + h^\frac{13}{2} ) , 
\\
  & 
  \| \tilde{n}^{m+1} \|_2 + \| \tilde{p}^{m+1} \|_2  	  
  \le \hat{C} ( \dt^\frac{13}{4} + h^\frac{13}{4} ) \le \dt^3 + h^3 ,   
\end{aligned} 
  \label{convergence-rough-8-4} 
\end{equation}    
under the linear refinement requirement $\lambda_1 h \le \dt \le \lambda_2 h$, with $\hat{C}$ dependent on the physical parameters. This inequality are exactly the rough error estimate~\eqref{convergence-rough-0}. The proof of Proposition~\ref{prop: rough estimate} is complete. 
\end{proof}

Based on the rough error estimate~\eqref{convergence-rough-0}, an application of 3-D inverse inequality yields  
	\begin{equation} 
\| \tilde{n}^{m+1} \|_\infty + \| \tilde{p}^{m+1} \|_\infty \le \frac{C ( \| \tilde{n}^{m+1} \|_2 + \| \tilde{p}^{m+1} \|_2 ) }{h^\frac32} \le \hat{C} ( \dt^\frac32 + h^\frac32 ) 
   \le \frac{\epsilon_0^\star}{2}  ,
	\label{convergence-rough-11-2} 
	\end{equation}  
under the same linear refinement requirement, provided that $\dt$ and $h$ are sufficiently small. Its combination with~\eqref{assumption:separation-2}, the separation property for the constructed approximate solution, leads to a similar property for the numerical solution at time step $t^{m+1}$:  
\begin{equation} 
  \frac{\epsilon_0^\star}{2} \le n^{m+1} 
  \le C^\star + \frac{\epsilon_0^\star}{2} \le \tilde{C}_3 \quad \mbox{and} \quad 
  \frac{\epsilon_0^\star}{2}  \le p^{m+1} 
  \le C^\star + \frac{\epsilon_0^\star}{2} \le \tilde{C}_3  .  
    \label{assumption:separation-5}
	\end{equation}  
This $\| \cdot \|_\infty$ bound will play a very important role in the refined error estimate. In addition, the following bound for the discrete temporal derivative of the numerical solution is also available: 
\begin{equation} 
\begin{aligned} 
  & 
  \|  \tilde{n}^{m+1} - \tilde{n}^m \|_\infty  
  \le \|  \tilde{n}^{m+1} \|_\infty + \| \tilde{n}^m \|_\infty   
  \le  ( \hat {C} + 1) ( \dt^\frac32 + h^\frac32 )  
  \le \dt , \quad \mbox{(by~\eqref{a priori-2}, \eqref{convergence-rough-11-2}) } , 
\\
  & 
   \| \hat{\mathsf N}^{m+1} - \hat{\mathsf N}^m \|_\infty \le C^\star \dt ,  \quad 
   \mbox{(by~\eqref{assumption:temporal bound}) } ,   
\\
  & 
  \| n^{m+1} - n^m \|_\infty   
  \le \| \hat{\mathsf N}^{m+1} - \hat{\mathsf N}^m \|_\infty  
   + \|  \tilde{n}^{m+1} - \tilde{n}^m \|_\infty    
   \le (C^* + 1) \dt , 
\\
  &
  \| p^{m+1} - p^m \|_\infty  \le (C^* + 1) \dt ,    \quad 
  \mbox{(using similar analysis)} .  
\end{aligned}  
   \label{convergence-rough-11-3} 
\end{equation}


\subsection{A refined error estimate} 

Before proceeding into the refined error estimate, the following preliminary results are needed. For simplicity of presentation, the detailed proof will be provided in Appendix~\ref{app: prelim est-1}.

\begin{lem} \label{prelim est-1}
Under the a-priori $\| \cdot \|_\infty$ estimate~\eqref{a priori-5}, \eqref{assumption:separation-3} for the numerical solution at the previous time steps and the rough $\| \cdot \|_\infty$ estimates~\eqref{assumption:separation-5}, \eqref{convergence-rough-11-3} for the one at the next time step, we have 
\begin{align} 
  & 
   \langle \tilde{n}^{m+1} - \tilde{n}^m , 
   G_{\hat{\mathsf N}^m}^1 ( \hat{\mathsf N}^{m+1} ) 
    - G_{n^m}^1 ( n^{m+1} )  \rangle   \nonumber 
\\
  & 
    \ge  \frac12 \Big( 
    \Big\langle \frac{1}{\hat{\mathsf N}^{m+1} } , ( \tilde{n}^{m+1} )^2  \Big\rangle   
       - \Big\langle \frac{1}{\hat{\mathsf N}^m }, ( \tilde{n}^m )^2  \Big\rangle \Big) 
       - \tilde{C}_6 \dt ( \| \tilde{n}^{m+1} \|_2^2 + \|  \tilde{n}^m \|_2^2  ) ,  
       \label{integral-refined-0-1}                   
\\
  &
     \langle \tilde{n}^{m+1} - \tilde{n}^m , \ln \hat{\mathsf N}^{m+1} - \ln n^{m+1}  
    - ( \ln \hat{\mathsf N}^m - \ln n^m ) \rangle    
  \ge  
   - \tilde{C}_7 \dt ( \| \tilde{n}^{m+1} \|_2^2 + \|  \tilde{n}^m \|_2^2  ) , 
   \label{integral-refined-0-2}     
\\
    & 
   \langle \tilde{p}^{m+1} - \tilde{p}^m , 
   G_{\hat{\mathsf P}^m}^1 ( \hat{\mathsf P}^{m+1} ) 
    - G_{p^m}^1 ( p^{m+1} )  \rangle  \nonumber  
\\
  & 
    \ge  \frac12 \Big( 
    \Big\langle \frac{1}{\hat{\mathsf P}^{m+1} } , ( \tilde{p}^{m+1} )^2  \Big\rangle   
       - \Big\langle \frac{1}{\hat{\mathsf P}^m }, ( \tilde{p}^m )^2  \Big\rangle \Big) 
       - \tilde{C}_6 \dt ( \| \tilde{p}^{m+1} \|_2^2 + \|  \tilde{p}^m \|_2^2  ) ,     
       \label{integral-refined-0-3}                 
\\
  &
     \langle \tilde{p}^{m+1} - \tilde{p}^m , \ln \hat{\mathsf P}^{m+1} - \ln p^{m+1}  
    - ( \ln \hat{\mathsf P}^m - \ln p^m ) \rangle    
  \ge  
   - \tilde{C}_7 \dt ( \| \tilde{p}^{m+1} \|_2^2 + \|  \tilde{p}^m \|_2^2  )  ,   
   \label{integral-refined-0-4}     
\end{align}    
 in which the constants $\tilde{C}_6$ and $\tilde{C}_7$ only depend on $\epsilon_0^\star$, and $C^\star$. 
\end{lem}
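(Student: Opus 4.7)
The plan for the four inequalities is to exploit integral representations that factor out $\tilde n^k$, and to use that, under the a-priori bounds~\eqref{a priori-5}, \eqref{assumption:separation-3} and the rough bounds~\eqref{assumption:separation-5}, \eqref{convergence-rough-11-3}, the four quantities $n^m, n^{m+1}, \hat{\mathsf N}^m, \hat{\mathsf N}^{m+1}$ all lie within $O(\dt)$ of $\hat{\mathsf N}^{m+\hf}$ in $\|\cdot\|_\infty$ and are uniformly bounded below by $\epsilon_0^\star/2$. The two $p$-estimates~\eqref{integral-refined-0-3}, \eqref{integral-refined-0-4} follow from the identical argument with $n\leftrightarrow p$, $\hat{\mathsf N}\leftrightarrow\hat{\mathsf P}$, so I focus on the $n$-estimates.

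For~\eqref{integral-refined-0-1}, I start from the integral representation $G_a^1(x) = \int_0^1[\ln(a+t(x-a))+1]\,dt$ and apply the fundamental theorem of calculus once more to each $\ln A_t - \ln B_t$, where $A_t := (1-t)\hat{\mathsf N}^m + t\hat{\mathsf N}^{m+1}$ and $B_t := (1-t)n^m + tn^{m+1}$, to obtain
\begin{equation*}
G_{\hat{\mathsf N}^m}^1(\hat{\mathsf N}^{m+1}) - G_{n^m}^1(n^{m+1}) = \int_0^1\!\!\int_0^1 \frac{(1-t)\tilde n^m + t\tilde n^{m+1}}{K_{s,t}}\,ds\,dt,
\end{equation*}
with $K_{s,t} := (1-s)B_t + sA_t$ a convex combination of $n^m, n^{m+1}, \hat{\mathsf N}^m, \hat{\mathsf N}^{m+1}$. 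The closeness estimates give $|K_{s,t}^{-1} - (\hat{\mathsf N}^{m+\hf})^{-1}| \le C\dt$ pointwise. Replacing $1/K_{s,t}$ by $1/\hat{\mathsf N}^{m+\hf}$ and integrating out $s$ and $t$, the main contribution to the inner product with $\tilde n^{m+1}-\tilde n^m$ reduces, via $(a+b)(a-b) = a^2 - b^2$, to $\tfrac12 \langle ((\tilde n^{m+1})^2 - (\tilde n^m)^2)/\hat{\mathsf N}^{m+\hf}\rangle$. A second $O(\dt)$ replacement $1/\hat{\mathsf N}^{m+\hf} = 1/\hat{\mathsf N}^{m+1} + O(\dt) = 1/\hat{\mathsf N}^m + O(\dt)$, applied to the positive and negative pieces separately, produces the split form stated in~\eqref{integral-refined-0-1}, with all remainders absorbed into $\tilde C_6\dt(\|\tilde n^{m+1}\|_2^2 + \|\tilde n^m\|_2^2)$ by Cauchy-Schwarz and Young.

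For~\eqref{integral-refined-0-2}, I write $\ln\hat{\mathsf N}^k - \ln n^k = I^k\,\tilde n^k$ with $I^k := \int_0^1 (n^k + s\tilde n^k)^{-1}\,ds > 0$, so that the bracket in~\eqref{integral-refined-0-2} equals $I^{m+1}(\tilde n^{m+1} - \tilde n^m) + (I^{m+1} - I^m)\tilde n^m$. Pairing against $\tilde n^{m+1} - \tilde n^m$ produces a manifestly non-negative quadratic $\langle I^{m+1}, (\tilde n^{m+1} - \tilde n^m)^2\rangle$, which I simply discard for the lower bound, plus a cross term. The lower bound $\epsilon_0^\star/2$ on the denominators in $I^k$, together with the temporal bounds $\|n^{m+1}-n^m\|_\infty, \|\tilde n^{m+1}-\tilde n^m\|_\infty \le C\dt$ from~\eqref{convergence-rough-11-3}, yields $\|I^{m+1} - I^m\|_\infty \le C\dt$; Cauchy-Schwarz and Young then deliver~\eqref{integral-refined-0-2}. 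The main obstacle throughout is careful bookkeeping of the $O(\dt)$ closeness constants from three distinct sources---temporal regularity of $\hat{\mathsf N}$, the a-priori $\|\tilde n^k\|_\infty = O(\dt)$ smallness, and the bound on $\|n^{m+1} - n^m\|_\infty$---to ensure $\tilde C_6, \tilde C_7$ depend only on $\epsilon_0^\star$ and $C^\star$, which relies decisively on the uniform lower bound $\epsilon_0^\star/2$ keeping every denominator away from zero.
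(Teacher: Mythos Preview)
Your proof is correct and follows essentially the same strategy as the paper: factor the nonlinear difference as a coefficient (close to $1/(\hat{\mathsf N}^{m+1}+\hat{\mathsf N}^m)$) times a linear combination of $\tilde n^m,\tilde n^{m+1}$, use the difference-of-squares identity after pairing with $\tilde n^{m+1}-\tilde n^m$, and then split the denominator between the two time levels with a further $O(\dt)$ replacement.

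The technical packaging differs slightly. The paper reaches the representation by decomposing $G_{\hat{\mathsf N}^m}^1(\hat{\mathsf N}^{m+1})-G_{n^m}^1(n^{m+1})$ into two pieces via the symmetry $G_a^1(x)=G_x^1(a)$ and then applies the mean-value form $(G_a^1)'(\eta)=1/(2\xi)$ (Lemma~\ref{lem: G property}, property (3)) to each, obtaining $\frac{1}{2\xi_n^{(m+1)}}\tilde n^{m+1}+\frac{1}{2\xi_n^{(m)}}\tilde n^m$. Your double-integral formula $G_a^1(x)=\int_0^1[\ln(a+t(x-a))+1]\,dt$ followed by a second integral for $\ln A_t-\ln B_t$ is a cleaner, fully explicit alternative that avoids both the symmetry trick and the intermediate-point bookkeeping; after integrating out $s,t$ the main term is exactly $\frac{1}{\hat{\mathsf N}^{m+1}+\hat{\mathsf N}^m}\cdot\frac12(\tilde n^{m+1}+\tilde n^m)$, matching the paper's intermediate step. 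For~\eqref{integral-refined-0-2} the paper only says ``similar manner''; your decomposition $I^{m+1}(\tilde n^{m+1}-\tilde n^m)+(I^{m+1}-I^m)\tilde n^m$, discarding the positive square $\langle I^{m+1},(\tilde n^{m+1}-\tilde n^m)^2\rangle$ and bounding $\|I^{m+1}-I^m\|_\infty$ via the temporal closeness of the denominators, is a clean way to make that step explicit.
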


Now we perform the refined error estimate. The inner product equation \eqref{convergence-rough-1}, as well as the estimate~\eqref{convergence-rough-2}-\eqref{convergence-rough-3} and \eqref{convergence-rough-5}, are still useful. Their combination yields  
	\begin{align}  
	 & 
\langle \tilde{n}^{m+1}  - \tilde{n}^m , \tilde{\mu}_n^{m+\hf} \rangle 
 + \langle \tilde{p}^{m+1} - \tilde{p}^m , \tilde{\mu}_p^{m+\hf} \rangle  
  + \frac{\epsilon_0^\star}{4}  \dt (   \| \nabla_h \tilde{\mu}_n^{m+\hf} \|_2^2 
  + D  \| \nabla_h \tilde{\mu}_p^{m+\hf} \|_2^2 )
	\nonumber 
	\\
 \le &    
    \frac{2 \dt}{  \epsilon_0^\star} \| \tau_n^{m+\hf} \|_{-1,h}^2 
  + \frac{2 \dt}{D \epsilon_0^\star}  \| \tau_p^{m+\hf} \|_{-1,h}^2
   + 2 (C^\star)^2 (\epsilon_0^\star )^{-1} 
  \dt (  \|  \breve{n}^{m+\hf}  \|_2^2 + D^{-1}  \|  \breve{p}^{m+\hf}  \|_2^2  ) .  
	\label{convergence-1} 
	\end{align} 
Meanwhile, we have to analyze the temporal stencil inner product in a more precise way. A detailed expansion in~\eqref{error equation-3} and \eqref{error equation-4} gives 
\begin{equation}    
\begin{aligned} 
  & 
 \langle \tilde{n}^{m+1}- \tilde{n}^m ,  \tilde{\mu}_n^{m+\hf} \rangle 
  + \langle \tilde{p}^{m+1} - \tilde{p}^m ,  \tilde{\mu}_p^{m+\hf} \rangle  
\\
  =&  
    \langle \tilde{n}^{m+1} - \tilde{n}^m , 
   G_{\hat{\mathsf N}^m}^1 ( \hat{\mathsf N}^{m+1} ) 
    - G_{n^m}^1 ( n^{m+1} )  \rangle    
    + \langle \tilde{p}^{m+1} - \tilde{p}^m , 
   G_{\hat{\mathsf P}^m}^1 ( \hat{\mathsf P}^{m+1} ) 
    - G_{p^m}^1 ( p^{m+1} )  \rangle 
\\
  &    
    + \dt \langle \tilde{n}^{m+1} - \tilde{n}^m , \ln \hat{\mathsf N}^{m+1} - \ln n^{m+1}  
    - ( \ln \hat{\mathsf N}^m - \ln n^m )  \rangle 
\\
  & 
    + \dt \langle \tilde{p}^{m+1} - \tilde{p}^m , \ln \hat{\mathsf P}^{m+1} - \ln p^{m+1}  
    - ( \ln \hat{\mathsf P}^m - \ln p^m )  \rangle   
 \\
   & 
   +  \langle \tilde{n}^{m+1} - \tilde{n}^m ,  
 (-\Delta_h)^{-1} ( \tilde{n}^{m+\hf} - \tilde{p}^{m+\hf}  )  \rangle 
 + \langle \tilde{p}^{m+1} - \tilde{p}^m ,  
 (-\Delta_h)^{-1} ( \tilde{p}^{m+\hf} - \tilde{n}^{m+\hf}  )  \rangle  .  
\end{aligned} 
    \label{convergence-2}       
\end{equation} 
In fact, a careful calculation reveals the following identity for the last term: 
\begin{equation}    
\begin{aligned} 
   & 
    \langle \tilde{n}^{m+1} - \tilde{n}^m ,  
 (-\Delta_h)^{-1} ( \tilde{n}^{m+\hf} - \tilde{p}^{m+\hf}  )  \rangle 
 + \langle \tilde{p}^{m+1} - \tilde{p}^m ,  
 (-\Delta_h)^{-1} ( \tilde{p}^{m+\hf} - \tilde{n}^{m+\hf}  )  \rangle  
\\
  = & 
   \langle ( \tilde{n}^{m+1} - \tilde{p}^{m+1} ) - ( \tilde{n}^m  - \tilde{p}^m ) ,  
 (-\Delta_h)^{-1} ( \tilde{n}^{m+\hf} - \tilde{p}^{m+\hf}  )  \rangle 
\\
  = & 
  \frac12 ( \| \tilde{n}^{m+1} - \tilde{p}^{m+1} \|_{-1, h}^2 
  -  \| \tilde{n}^m - \tilde{p}^m \|_{-1, h}^2 ) . 
\end{aligned} 
    \label{convergence-3}       
\end{equation} 
Its combination with the refined estimates~\eqref{integral-refined-0-1}-\eqref{integral-refined-0-4} (in Lemma~\ref{prelim est-1}) results in 
\begin{equation}    
\begin{aligned} 
  & 
 \langle \tilde{n}^{m+1}- \tilde{n}^m ,  \tilde{\mu}_n^{m+\hf} \rangle 
  + \langle \tilde{p}^{m+1} - \tilde{p}^m ,  \tilde{\mu}_p^{m+\hf} \rangle  
\\
  \ge & 
  {\cal F}^{m+1} - {\cal F}^m 
  - ( \tilde{C}_6 + \tilde{C}_7 ) \dt ( \| \tilde{n}^{m+1} \|_2^2 + \|  \tilde{n}^m \|_2^2  
  + \| \tilde{p}^{m+1} \|_2^2 + \|  \tilde{p}^m \|_2^2  ) ,  
\\
  & 
  {\cal F}^{m+1} = \frac12 \Big( 
    \Big\langle \frac{1}{\hat{\mathsf N}^{m+1} } , ( \tilde{n}^{m+1} )^2  \Big\rangle    
    + \Big\langle \frac{1}{\hat{\mathsf P}^{m+1} } , ( \tilde{p}^{m+1} )^2  \Big\rangle  
    + \| \tilde{n}^{m+1} - \tilde{p}^{m+1} \|_{-1, h}^2 \Big) .     
\end{aligned} 
    \label{convergence-4}       
\end{equation} 
Subsequently, a substitution of~\eqref{convergence-4} into \eqref{convergence-1} leads to 
\begin{equation} 
	\begin{aligned}  
	 {\cal F}^{m+1} - {\cal F}^m  
	\le & 
         \tilde{C}_8 \dt ( \| \tilde{n}^{m+1} \|_2^2 + \|  \tilde{n}^m \|_2^2  
         + \|  \tilde{n}^{m-1} \|_2^2           
  + \| \tilde{p}^{m+1} \|_2^2 + \|  \tilde{p}^m \|_2^2  
  + \| \tilde{p}^{m-1} \|_2^2 )	
	\\
    &    
    + \frac{2 \dt}{  \epsilon_0^\star} \| \tau_n^{m+\hf} \|_{-1,h}^2 
  + \frac{2 \dt}{D \epsilon_0^\star}  \| \tau_p^{m+\hf} \|_{-1,h}^2  
\\
   \le & 
     \tilde{C}_8 C^* \dt ( {\cal F}^{m+1} + {\cal F}^m + {\cal F}^{m-1} )	 
     + \frac{2 \dt}{  \epsilon_0^\star} \| \tau_n^{m+\hf} \|_{-1,h}^2 
  + \frac{2 \dt}{D \epsilon_0^\star}  \| \tau_p^{m+\hf} \|_{-1,h}^2  ,       
        \end{aligned} 
	\label{convergence-5-1} 
\end{equation} 
with $\tilde{C}_8 = \tilde{C}_6 \tilde{C}_7 + 6 (C^\star)^2 (\epsilon_0^\star )^{-1} +1$. Notice that we have applied the following inequalities in the derivation: 
\begin{equation} 
\begin{aligned} 
  & 
   \|  \tilde{n}^{m+\hf}  \|_2^2  
   = \|  \frac32 \tilde{n}^m  - \frac12 \tilde{n}^{m-1}  \|_2^2   
   \le  3 \|  \tilde{n}^m \|_2^2 +  \| \tilde{n}^{m-1}  \|_2^2  , \, \, \, 
   \|  \tilde{p}^{m+\hf}  \|_2^2    
   \le  3 \|  \tilde{p}^m \|_2^2 +  \| \tilde{p}^{m-1}  \|_2^2  , 
\\
  & 
  \Big\langle \frac{1}{\hat{\mathsf N}^{k} } , ( \tilde{n}^{k} )^2  \Big\rangle 
  \ge \frac{1}{C^*} \| \tilde{n}^{k} \|_2^2  ,  \,  \, \, 
  \Big\langle \frac{1}{\hat{\mathsf P}^{k} } , ( \tilde{p}^{k} )^2  \Big\rangle
  \ge \frac{1}{C^*} \| \tilde{p}^{k} \|_2^2  , \, \, \, 
   \mbox{so that} \, \, 
  {\cal F}^k \ge \frac{1}{2 C^*} ( \| \tilde{n}^{k} \|_2^2  + \| \tilde{p}^{k} \|_2^2  )  .  
 \end{aligned}   
   \label{convergence-5-2} 
\end{equation} 
As a result, an application of discrete Gronwall inequality leads to the desired higher order convergence estimate
	\begin{equation}  
   {\cal F}^{m+1} \le  C (\dt^8 + h^8) ,  \quad \mbox{so that} \, \, \, 
  \| \tilde{n}^{m+1} \|_2 + \| \tilde{p}^{m+1} \|_2   \le C ( \dt^4 + h^4 ) , 
	\label{convergence-6}
	\end{equation}  
in which the higher order truncation error accuracy, $\| \tau_n^{m+1} \|_2$, $\| \tau_p^{m+1} \|_2 \le C (\dt^4 + h^4)$, has been applied. The refined error estimate is finished. 

\noindent
{\bf Recovery of the a-priori assumption~\eqref{a priori-1}} 

With the help of the higher order error estimate~\eqref{convergence-6}, we conclude that the a-priori assumption in~\eqref{a priori-1} is satisfied at the next time step $t^{m+1}$:  
	\begin{equation} 
\| \tilde{n}^{m+1} \|_2,  \| \tilde{p}^{m+1} \|_2 \le \hat{C}_2 ( \dt^4 + h^4 ) \le \dt^\frac{15}{4} + h^\frac{15}{4} ,  
	\label{a priori-9}  
	\end{equation} 
provided $\dt$ and $h$ are sufficiently small. Consequently, an induction analysis could be applied, so that the higher order convergence analysis is complete.  

As a further result, the convergence estimate~\eqref{convergence-0} for the variable $(n, p)$ comes from a combination of~\eqref{convergence-6} with the definition~\eqref{consistency-1} of the  constructed approximate solution $(\hat{\mathsf N}, \hat{\mathsf P})$, as well as the projection estimate~\eqref{projection-est-0}. 

Finally, to derive a convergence estimate for the electric potential variable $\phi$, we recall the definition for $\tilde{\phi}^k$ in~\eqref{error function-2} and observe the following inequality 
	\begin{equation}
\| \tilde{\phi}^m \|_{H_h^2} \le C \| \Delta_h \tilde{\phi^m} \|_2 \le C \| \tilde{n}^m - \tilde{p}^m \|_2 \le \hat{C}_3 (\dt^4 + h^4) ,   \quad  \hat{C}_3 =C \hat{C}_2 . 
	\label{convergence-7-1} 
	\end{equation}
This in turn implies that 
	\begin{equation}
  \| \tilde{\phi}^m - e_\phi^m \|_{H_h^2} 
  \le C \|  \Delta_h (\tilde{\phi}^m - e_\phi^m) \|_2 
  \le \hat{C}_4 (\dt^2 + h^2) ,
	\label{convergence-7-3}    
	\end{equation} 
and
	\begin{align}
(-\Delta_h) (\tilde{\phi}^m - e_\phi^m) & = {\cal P}_N ( \dt^2 {\mathsf P}_{\dt, 1} + \dt^3 {\mathsf P}_{\dt, 2} + h^2 {\mathsf P}_{h, 1} 
	\nonumber
	\\
& \quad - \dt^2 {\mathsf N}_{\dt, 1} - \dt^3 {\mathsf N}_{\dt, 2} 
  - h^2 {\mathsf N}_{h, 1} ) + \tau_\phi^{m+\hf}  .  \label{convergence-7-2}   
	\end{align}
In fact, the discrete elliptic regularity has been applied in~\eqref{convergence-7-1}, \eqref{convergence-7-3}, and the truncation error for $\phi$ turns out to be $\tau_\phi^m = (-\Delta_h) \Phi_N -   (\hat{\mathsf P}^m - \hat{\mathsf N}^m)$. Of course, we arrive at 
	\begin{equation} 
\| e_\phi^m \|_{H_h^2} \le \| \tilde{\phi}^m \|_{H_h^2} +  \| \tilde{\phi}^m - e_\phi^m \|_{H_h^2} \le  \hat{C}_3 (\dt^4 + h^4)  +   \hat{C}_4 (\dt^2 + h^2)  \le ( \hat{C}_4 + 1) (\dt^2 + h^2) . 
	\label{convergence-7-4}    
	\end{equation} 
This finishes the proof of Theorem~\ref{thm:convergence}.

	\section{Numerical results}
	\label{sec:numerical results}
An iterative algorithm is proposed to numerically solve the fully nonlinear scheme~\eqref{scheme-PNP-2nd-1} -- \eqref{scheme-PNP-2nd-chem pot-p} at each time step. Since the nonlinear scheme is a three-step method, we use the first-order scheme proposed in~\cite{LiuC2021} to obtain the solution at the first time step. Given $n^{m-1}$, $p^{m-1}$, $\phi^{m-1}$, $n^{m}$, $p^{m}$, and $\phi^{m}$, we set the initial guess as $n^{m+1,0}:= \max(2n^m-n^{m-1}, \ve)$, $p^{m+1,0}:= \max(2p^m-p^{m-1}, \ve)$, and $\phi^{m+1,0}:= 2\phi^m-\phi^{m-1}$, where $\ve$ is a small positive number. Given the $k$-th iterate numerical solution $n^{m+1,k}$, $p^{m+1,k}$, $\phi^{m+1,k}$, the first stage of the $(k+1)$-th iterate is obtained by the following linearized iteration: 
\begin{equation*} 
	\begin{aligned} 
&n^{m+1,*} - \left(\dt+\dt^2 \right) \nabla_h \cdot \left(\breve{\cal M}_n^{m+1/2} \nabla_h \left(\frac{n^{m+1,*}}{n^{m+1,k}}\right) \right)  = n^{m} \\
&\hspace{2mm}+ \dt \nabla_h \cdot \left( \breve{\cal M}_n^{m+1/2} \nabla_h \left(  
  \ln n^{m+1,k} + \frac{n^m(\ln n^{m+1,k}-\ln n^{m})}{n^{m+1,k}-n^{m}} +\dt \ln \left(\frac{n^{m+1,k}}{n^{m}}\right) -\frac{\phi^{m+1, k}+ \phi^{m}}{2}  \right)  \right), 
	\\
&p^{m+1,*} - \left(\dt+\dt^2 \right) \nabla_h \cdot \left(\breve{\cal M}_p^{m+1/2} \nabla_h \left(\frac{p^{m+1,*}}{p^{m+1,k}}\right) \right)  = p^{m} \\
&\hspace{2mm}+ \dt \nabla_h \cdot \left( \breve{\cal M}_p^{m+1/2} \nabla_h \left(  
  \ln p^{m+1,k} + \frac{p^m(\ln p^{m+1,k}-\ln p^{m})}{p^{m+1,k}-p^{m}} +\dt \ln \left(\frac{p^{m+1,k}}{p^{m}}\right) +\frac{\phi^{m+1, k}+ \phi^{m}}{2}  \right)  \right), 
	\\
&-\Delta_h \phi^{m+1,*}  = p^{m+1,*} -n^{m+1,*}.
	\end{aligned} 
\end{equation*} 
In particular, in the regime that the distance between $n^{m+1,k}$ and $n^{m}$ is of a machine error order, the following approximation is employed to prevent a singular calculation:  
$$
  \frac{n^m(\ln n^{m+1,k}-\ln n^{m})}{n^{m+1,k}-n^{m}}\approx \frac{2n^m}{n^{m+1,k}+n^{m}} .
$$
Next, we obtain $n^{m+1,k+1}$, $p^{m+1,k+1}$, and $\phi^{m+1,k+1}$ by
\begin{eqnarray*} 
\begin{aligned} 
\left(n^{m+1,k+1}, p^{m+1,k+1}, \phi^{m+1,k+1} \right) = & 
\omega_r  \left(n^{m+1,k}, p^{m+1,k}, \phi^{m+1,k} \right)
\\
   & 
  + (1- \omega_r)  \left(n^{m+1,*}, p^{m+1,*}, \phi^{m+1,*} \right), 
\end{aligned} 
\end{eqnarray*} 
where $\omega_r \in (0,1)$ is a relaxation parameter. 

In the following numerical tests, the PNP system is solved by the proposed numerical scheme in a computational domain $\Omega=(-1,1)^2$.  For simplicity, we take $D=1$ and the initial data for concentrations
\begin{equation*} 
p(x,y,0)=0.01 \quad \mbox{and}\quad  n(x,y,0)=0.01. 
\end{equation*} 
In addition, a fixed charge distribution is considered, with $\sigma=0.5$: 
\begin{equation*} 
\begin{aligned} 
\rho^f(x,y)= \frac{100}{\sigma\sqrt{2\pi}} \left( {\rm e}^{-\frac{(x+\frac{1}{2})^2+(y+\frac{1}{2})^2}{2\sigma^2} }
 - {\rm e}^{-\frac{(x+\frac{1}{2})^2+(y-\frac{1}{2})^2}{2\sigma^2} }
- {\rm e}^{-\frac{(x-\frac{1}{2})^2+(y+\frac{1}{2})^2}{2\sigma^2} }
+ {\rm e}^{-\frac{(x-\frac{1}{2})^2+(y-\frac{1}{2})^2}{2\sigma^2} } \right) .  
\end{aligned} 
\end{equation*} 

\begin{table}[ht]
\begin{center}
\begin{tabular}{ccccccc}
\hline  \hline
 --- & $u=p$ & Order & $u=n$& Order &$u=\psi$ & Order\\
 \hline
$\| u_{h_1} - u_{h_2} \|_{\infty}$  & 1.196E-4 & - & 1.196E-4 & - & 5.508E-2 & -\\
$\| u_{h_2} - u_{h_3} \|_{\infty}$  & 2.243E-5 & 1.98 & 2.243E-5 & 1.98 & 1.120E-2 & 1.87\\ 
$\| u_{h_3} - u_{h_4} \|_{\infty}$  & 7.851E-6 & 2.00 & 7.851E-6 & 2.00 & 4.124E-3 & 1.88\\
$\| u_{h_3} - u_{h_4} \|_{\infty}$  & 3.635E-6 & 2.00 & 3.635E-6 & 2.00 & 1.976E-3 & 1.89\\
 \hline  \hline
\end{tabular}
\caption{The $L_h^\infty$ differences and convergence order for $p$, $n$, and $\psi$ at time $T=0.5$ with $\Delta t=0.1h$, where various mesh resolutions are used: $h_1=\frac{1}{20}$, $h_2=\frac{1}{40}$, $h_3=\frac{1}{60}$, $h_4=\frac{1}{80}$, and $h_5=\frac{1}{100}$.}
\label{t2:convergence}
\end{center}
\end{table}

To test the numerical accuracy, we perform a series of computations with a sequence of mesh resolutions: $h=\frac{1}{20}, \frac{1}{40}, \frac{1}{60}, \frac{1}{80}$, $\frac{1}{100}$, and the time step size is set as $\Delta t=0.1h$. Since the exact solution is not available, we calculate the $\ell^{\infty}$ differences between numerical solutions of with consecutive spatial resolutions, $h_{j-1}$, $h_j$ and $h_{j+1}$. The convergence order is determined by
$$
 \text{Convergence Order} \approx \frac{ \ln \Big(  \frac{1}{A^*} \cdot 
   \frac{\| u_{h_{j-1}} - u_{h_j} \|_\infty }{ \| u_{h_j} - u_{h_{j+1}} \|_\infty} \Big) } 
   {\ln  \frac{h_{j-1}}{h_j} } ,  \quad A^* =  \frac{ 1 - \frac{h_j^2}{h_{j-1}^2} }{1 - \frac{h_{j+1}^2}{h_j^2} } ,  
   \quad \mbox{for} \, \, \, h_{j-1} > h_j > h_{j+1} . 
$$
From Table~\ref{t2:convergence}, one can see that the $\ell^{\infty}$ differences at time $T=0.5$ decrease robustly as the mesh refines, predicting an almost perfect second order convergence rate for both the ionic concentrations and electrostatic potential. As displayed in Figure~\ref{f:PsiNP}, one can find that the minimum ionic concentration could be extremely low up to time $T=0.5$, showing that our proposed numerical scheme is indeed very robust even when the ionic concentration is low. 

\begin{figure}[ht]
\centering
\includegraphics[scale=.5]{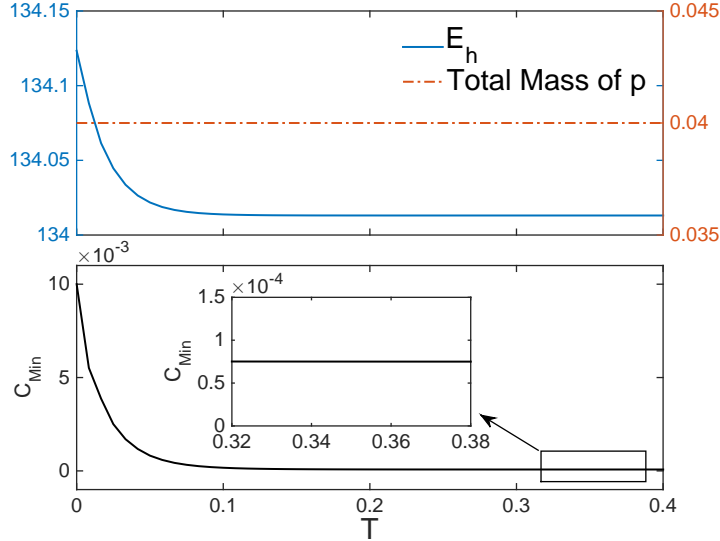}
\caption{The evolution of discrete energy $E_h$, total mass of $p$, and the minimum concentration $C_{\rm Min}$. }
\label{f:PsiNP}
\centering
\end{figure}

In addition, we perform numerical tests to demonstrate the performance of our numerical scheme in preserving physical properties at a discrete level.  With periodic boundary conditions, the total mass of concentrations over the computational box should be conserved for each time step. This is perfectly confirmed in the upper panel of the  Figure~\ref{f:PsiNP}. Also, from the figure,  one can observe that the discrete energy $E_h$ decreases monotonically, as predicted in our numerical analysis. To explore the positivity-preserving property, we focus on the evolution of the minimum concentration, i.e., $C_{\rm Min}:={\rm Min}\{{\rm Min}_{i,j,k} n_{i,j,k}^m, {\rm Min}_{i,j,k} p_{i,j,k}^m \}$. As shown in the lower panel of Figure~\ref{f:PsiNP}, the numerical solutions of concentration remain positive all the time, even though the concentration could be extremely low for $T>0.1$. Overall, we can see from the numerical tests that the proposed numerical scheme is capable of maintaining mass conservation, energy dissipation, and positivity at a discrete level.

\section{Concluding remarks}  \label{sec:conclusion} 
A second order accurate numerical scheme is proposed and analyzed for the Poisson-Nernst-Planck (PNP) system, with finite difference spatial approximation. The Energetic Variational Approach (EnVarA) is taken, so that the PNP system could be reformulated as a non-constant mobility $H^{-1}$ gradient flow, with singular logarithmic energy potentials involved. In the proposed numerical algorithm, the mobility function is explicitly treated with a second order accurate extrapolation formula, so that the elliptic nature of the temporal derivative part is preserved and the unique solvability could be ensured. A modified Crank-Nicolson approximation is applied to the logarithmic term, in the form of $\frac{F (u^{m+1}) - F (u^m) }{u^{m+1} - u^m}$ (with $u$ either the component $n$ or $p$). The advantage of this approximation is associated with the fact that, its inner product with the discrete temporal derivative exactly gives the corresponding nonlinear energy difference; henceforth the energy stability is ensured for the logarithmic part. In addition, nonlinear artificial regularization terms, in the form of $\dt ( \ln n^{m+1}  - \ln n^m )$, $\dt ( \ln p^{m+1}  - \ln p^m )$, have been added in the numerical scheme, so that the positivity-preserving property could be theoretically justified, with the help of the singularity associated with the logarithmic function. In addition, an optimal rate convergence analysis is provided in this work, with many highly non-standard estimates involved. The higher order asymptotic expansion (up to fourth order temporal accuracy and spatial accuracy) has been performed with a careful linearization expansion. In turn, we are able to obtain a rough error estimate, so that to the $L_h^\infty$ bound for $n$ and $p$ could be derived, as well as the temporal derivatives. With the help of these bounds, the corresponding inner product between the discrete temporal derivative of the numerical error function and the numerical error associated with the chemical potential becomes a discrete derivative of certain nonlinear, non-negative functional in terms of the numerical error functions, combined with some numerical perturbation terms. Finally, the refined error estimate are carried out to accomplish the desired convergence result. It is the first work to combine the following  theoretical properties for numerical scheme to the PNP system: second order accuracy in both time and space, unique solvability/positivity-preserving, energy stability and optimal rate convergence analysis. A few numerical results are also presented in this article, which demonstrates the robustness of the proposed numerical scheme.

	\section*{Acknowledgements} 
This work is supported in part by the National Science Foundation (USA) grants NSF DMS-1759535, NSF DMS-1759536 (C.~Liu), NSF DMS-2012669 (C.~Wang),  NSF DMS-1719854, DMS-2012634 (S.~Wise), National Natural Science Foundation of China 11971342 (X.~Yue), 12171319, Young Elite Scientist Sponsorship Program by Jiangsu Association for Science and Technology, and Natural Science Foundation of Jiangsu Province (BK 20200098), China (S.~Zhou). 

\appendix

\section{Proof of Proposition~\ref{prop:consistency}}  \label{app: consistency} 

In terms of the temporal discretization, the following local truncation error can be derived by a Taylor expansion in time, combined with the projection estimate~\eqref{projection-est-0}:  	
\begin{eqnarray} 
 \frac{{\mathsf N}_N^{m+1} - {\mathsf N}_N^m}{\dt} &=&  \nabla \cdot \Big( 
    \breve{\mathsf N}_N^{m + \hf} \nabla ( 
     G_{{\mathsf N}_N^m}^1 ( {\mathsf N}_N^{m+1} ) +    
  (-\Delta)^{-1} ( {\mathsf N}_N^{m+\hf} - {\mathsf P}_N^{m+\hf} )   \nonumber 
\\
  &&
   + \dt ( \ln {\mathsf N}_N^{m+1} - \ln {\mathsf N}_N^m ) ) \Big)  
   + \dt^2 ( G_n^{(0)} )^{m+\hf} + O(\dt^3) + O (h^{m_0}) , 
     \label{consistency-2-1} 
\\ 
  \frac{ {\mathsf P}_N^{m+1} - {\mathsf P}_N^m}{\dt} &=&  \nabla \cdot \Big( 
  D \breve{\mathsf P}_N^{m + \hf} \nabla ( 
     G_{{\mathsf P}_N^m}^1 ( {\mathsf N}_P^{m+1} ) +    
  (-\Delta)^{-1} ( {\mathsf P}_N^{m+\hf} - {\mathsf N}_N^{m+\hf} )   \nonumber 
\\
  &&
   + \dt ( \ln {\mathsf P}_N^{m+1} - \ln {\mathsf P}_N^m ) ) \Big) 
   + \dt^2 ( G_p^{(0)} )^{m+\hf} + O(\dt^3) + O (h^{m_0}) ,   
     \label{consistency-2-2} 
\\
  \breve{\mathsf N}_N^{m + \hf} &=& 
  \frac32 {\mathsf N}_N^m - \frac12 {\mathsf N}_N^{m-1}  , \quad 
  \breve{\mathsf P}_N^{m + \hf} = 
  \frac32 {\mathsf P}_N^m - \frac12 {\mathsf P}_N^{m-1}   , \nonumber   
\\
  {\mathsf N}_N^{m+\hf}  &=& 
  \frac12 ( {\mathsf N}_N^{m+1}  + {\mathsf N}_N^m ) , \quad 
  {\mathsf P}_N^{m+\hf}  = 
  \frac12 ( {\mathsf P}_N^{m+1}  + {\mathsf P}_N^m )   , \nonumber     
 \end{eqnarray}  
with the projection accuracy order $m_0 \ge 4$. In fact, the spatial functions $G_n^{(0)}$ $G_p^{(0)}$ are smooth enough in the sense that their derivatives are bounded. 

Subsequently, the leading order temporal correction function $({\mathsf N}_{\dt, 1}, {\mathsf P}_{\dt, 1})$ turns out to be the solution of the following linear equations:
\begin{eqnarray} 
  \partial_t {\mathsf N}_{\dt, 1}  &=& 
  \nabla \cdot \Big( 
    {\mathsf N}_{\dt, 1} \nabla ( 
     \ln  {\mathsf N}_N  +    
  (-\Delta)^{-1} ( {\mathsf N}_N - {\mathsf P}_N )  )    \nonumber 
\\
  &&  \quad 
   +   {\mathsf N}_N \nabla (   
    \frac{1}{{\mathsf N}_N} {\mathsf N}_{\dt, 1} +    
  (-\Delta)^{-1} ( {\mathsf N}_{\dt, 1} - {\mathsf P}_{\dt, 1} )  ) \Big)   
   - G_n^{(0)}  , 
     \label{consistency-3-1} 
\\
    \partial_t {\mathsf P}_{\dt, 1} &=& 
  \nabla \cdot \Big( 
  D {\mathsf P}_{\dt, 1} \nabla ( 
    \ln  {\mathsf P}_N  +    
  (-\Delta)^{-1} ( {\mathsf P}_N - {\mathsf N}_N )  )    \nonumber 
\\
  &&  \quad 
   + D {\mathsf P}_N \nabla (   
    \frac{1}{{\mathsf P}_N} {\mathsf P}_{\dt, 1} +    
  (-\Delta)^{-1} ( {\mathsf P}_{\dt, 1} - {\mathsf N}_{\dt, 1}  )  ) \Big)   
   - G_p^{(0)}  .  
     \label{consistency-3-2}      
\end{eqnarray}
In fact, existence of a solution of the above linear and parabolic PDE system is straightforward. It depends only on the projection solution $({\mathsf N}_N, {\mathsf P}_N)$. And also, the derivatives of $({\mathsf N}_{\dt, 1}, {\mathsf P}_{\dt, 1})$ in various orders are bounded. In turn, an application of the semi-implicit discretization to~\eqref{consistency-3-1}-\eqref{consistency-3-2} gives 
\begin{eqnarray} 
 \frac{ {\mathsf N}_{\dt, 1}^{m+1} - {\mathsf N}_{\dt, 1}^m}{\dt} &=& 
  \nabla \cdot \Big( 
    \breve{\mathsf N}_{\dt, 1}^{m+\hf} \nabla ( 
    G_{{\mathsf N}_N^m}^1 ( {\mathsf N}_N^{m+1} ) +    
  (-\Delta)^{-1} ( {\mathsf N}_N^{m+\hf} - {\mathsf P}_N^{m+\hf} )   )    \nonumber 
\\
  &&  \, \,  
   +   \breve{\mathsf N}_N^{m+\hf} \nabla (   
    \frac{1}{{\mathsf N}_N^{m+\hf} }  {\mathsf N}_{\dt, 1}^{m+\hf} 
     +    
  (-\Delta)^{-1} (  {\mathsf N}_{\dt, 1}^{m+\hf} - {\mathsf P}_{\dt, 1}^{m+\hf} )  ) \Big)    
  \nonumber 
\\
  &&  \quad 
   - ( G_n^{(0)} )^{m+\hf}  + \dt^2 \hh_1^{m+\hf} + O (\dt^3) ,  \label{consistency-4-1} 
\\
   \frac{ {\mathsf P}_{\dt, 1}^{m+1} -  {\mathsf P}_{\dt, 1}^m}{\dt} &=& 
   \nabla \cdot \Big( 
    D \breve{\mathsf P}_{\dt, 1}^{m+\hf} \nabla ( 
    G_{{\mathsf P}_N^m}^1 ( {\mathsf P}_N^{m+1} ) +    
  (-\Delta)^{-1} ( {\mathsf P}_N^{m+\hf} - {\mathsf N}_N^{m+\hf} )   )    \nonumber 
\\
  &&  \, \,  
   +   D \breve{\mathsf P}_N^{m+\hf} \nabla (   
    \frac{1}{{\mathsf P}_N^{m+\hf} }  {\mathsf P}_{\dt, 1}^{m+\hf} 
     +    
  (-\Delta)^{-1} (  {\mathsf P}_{\dt, 1}^{m+\hf} - {\mathsf N}_{\dt, 1}^{m+\hf} )  ) \Big)    
  \nonumber 
\\
  &&  \quad 
   - ( G_p^{(0)} )^{m+\hf}  + \dt^2 \hh_2^{m+\hf} + O (\dt^3)  ,  
     \label{consistency-4-2}      
\\
  \breve{\mathsf N}_{\dt, 1}^{m+\hf} &=& 
  \frac32 {\mathsf N}_{\dt, 1}^m - \frac12 {\mathsf N}_{\dt, 1}^{m-1} , \quad 
  \breve{\mathsf P}_{\dt, 1}^{m+\hf} = 
  \frac32 {\mathsf P}_{\dt, 1}^m - \frac12 {\mathsf P}_{\dt, 1}^{m-1} ,  \nonumber    
\\
  {\mathsf N}_{\dt, 1}^{m+\hf} &=& 
  \frac12 ( {\mathsf N}_{\dt, 1}^{m+1} + {\mathsf N}_{\dt, 1}^m ) , \quad 
  {\mathsf P}_{\dt, 1}^{m+\hf} = 
  \frac12 ( {\mathsf P}_{\dt, 1}^{m+1} + {\mathsf P}_{\dt, 1}^m ) .  \nonumber     
\end{eqnarray}
A combination of~\eqref{consistency-2-1}-\eqref{consistency-2-2} and \eqref{consistency-4-1}-\eqref{consistency-4-2} results in the third order temporal truncation error for $\hat{\mathsf N}_1 := {\mathsf N}_N + \dt^2 {\cal P}_N {\mathsf N}_{\dt, 1}$, $\hat{\mathsf P}_1 := {\mathsf P}_N + \dt^2 {\cal P}_N {\mathsf P}_{\dt, 1}$: 
\begin{eqnarray} 
  \frac{\hat{\mathsf N}_1^{m+1} - \hat{\mathsf N}_1^m}{\dt} &=& 
  \nabla \cdot \Big( 
    ( \frac32 \hat{\mathsf N}_1^m - \frac12 \hat{\mathsf N}_1^{m-1} ) 
    \nabla ( 
    G_{\hat{\mathsf N}_1^m}^1 ( \hat{\mathsf N}_1^{m+1} ) +    
  (-\Delta)^{-1} ( \hat{\mathsf N}_1^{m+\hf} 
  - \hat{\mathsf P}_1^{m+\hf} )    \nonumber 
\\
  && 
  + \dt ( \ln \hat{\mathsf N}_1^{m+1} - \ln \hat{\mathsf N}_1^m ) ) \Big)   
   + \dt^3 ( G_n^{(1)} )^{m+\hf} + O(\dt^4) + O (h^{m_0}) , 
     \label{consistency-5-1} 
\\ 
   \frac{\hat{\mathsf P}_1^{m+1} - \hat{\mathsf P}_1^m}{\dt} &=& 
    \nabla \cdot \Big( 
       D ( \frac32 \hat{\mathsf P}_1^m - \frac12 \hat{\mathsf P}_1^{m-1} )  
       \nabla ( 
    G_{\hat{\mathsf P}_1^m}^1 ( \hat{\mathsf P}_1^{m+1} ) +    
  (-\Delta)^{-1} ( \hat{\mathsf P}_1^{m+\hf} 
  - \hat{\mathsf N}_1^{m+\hf} )    \nonumber 
\\
  && 
  + \dt ( \ln \hat{\mathsf P}_1^{m+1} - \ln \hat{\mathsf P}_1^m ) ) \Big)   
   + \dt^3 ( G_p^{(1)} )^{m+\hf} + O(\dt^4) + O (h^{m_0})  ,   
     \label{consistency-5-2} 
\\
  \hat{\mathsf N}_1^{m+\hf}  &=& 
  \frac12 ( \hat{\mathsf N}_1^{m+1} + \hat{\mathsf N}_1^m ) , \quad 
  \check{\mathsf P}_1^{m+\hf}  = 
  \frac12 ( \hat{\mathsf P}_1^{m+1} + \hat{\mathsf P}_1^m )  .  \nonumber       
\end{eqnarray} 
In the derivation of~\eqref{consistency-5-1}-\eqref{consistency-5-2}, the following linearized expansions have been applied: 
\begin{eqnarray} 
\begin{aligned} 
	G_{\hat{\mathsf N}_1^m}^1 ( \hat{\mathsf N}_1^{m+1} ) 
	 = & G_{{\mathsf N}_N^m}^1 ( {\mathsf N}_N^{m+1} )  
 	  + \frac{1}{ 2 {\mathsf N}_N^m}	  ( \hat{\mathsf N}_1^m - {\mathsf N}_N^m ) 
 	  + \frac{1}{ 2 {\mathsf N}_N^{m+1} }	 
 	  ( \hat{\mathsf N}_1^{m+1} - {\mathsf N}_N^{m+1} ) + O (\dt^3)  
\\
       = & G_{{\mathsf N}_N^m}^1 ( {\mathsf N}_N^{m+1} )  
 	  + \frac12 \dt^2 \Big( \frac{1}{ {\mathsf N}_N^m} \cdot {\mathsf N}_{\dt, 1}^m  
 	  + \frac{1}{ {\mathsf N}_N^{m+1} } \cdot {\mathsf N}_{\dt, 1}^{m+1} \Big) 
	  + O (\dt^3) , 
\\
   \frac{1}{{\mathsf N}_N^{m+\hf} }  {\mathsf N}_{\dt, 1}^{m+\hf}  = & 
    \frac12 \Big( \frac{1}{ {\mathsf N}_N^m} \cdot {\mathsf N}_{\dt, 1}^m  
 	  + \frac{1}{ {\mathsf N}_N^{m+1} } \cdot {\mathsf N}_{\dt, 1}^{m+1} \Big)  
	  + O (\dt^2 )  ,   
\end{aligned}     
  \label{consistency-6}  	
\end{eqnarray} 
in which property (3) of $G_a^1 (x)$ (as stated in Lemma~\ref{lem: G property}) is recalled. The corresponding expansions for $G_{\hat{\mathsf P}_1^m}^1 ( \hat{\mathsf P}_1^{m+1} )$ could be similarly derived, and the technical details are skipped for the sake of brevity. 

Similarly, the next order temporal correction function $({\mathsf N}_{\dt, 2}, {\mathsf P}_{\dt, 2})$ turns out to be the solution of following linear equations: 
\begin{eqnarray} 
  \partial_t {\mathsf N}_{\dt, 2}  &=& 
  \nabla \cdot \Big( 
    {\mathsf N}_{\dt, 2} \nabla ( 
    \ln  \check{\mathsf N}_1 +    
  (-\Delta)^{-1} ( \check{\mathsf N}_1 - \check{\mathsf P}_1 )  )    \nonumber 
\\
  &&  \quad 
   +   \check{\mathsf N}_1 \nabla (   
    \frac{1}{\check{\mathsf N}_1} {\mathsf N}_{\dt, 2} +    
  (-\Delta)^{-1} ( {\mathsf N}_{\dt, 2} - {\mathsf P}_{\dt, 2} )  ) \Big)   
   - G_n^{(1)}  , 
     \label{consistency-7-1} 
\\
    \partial_t {\mathsf P}_{\dt, 2} &=& 
  \nabla \cdot \Big( 
  D {\mathsf P}_{\dt, 2} \nabla ( 
    \ln \check{\mathsf P}_1 +    
  (-\Delta)^{-1} ( \check{\mathsf P}_1 - \check{\mathsf N}_1 )  )    \nonumber 
\\
  &&  \quad 
   + D \check{\mathsf P}_1 \nabla (   
    \frac{1}{\check{\mathsf P}_1} {\mathsf P}_{\dt, 2} +    
  (-\Delta)^{-1} ( {\mathsf P}_{\dt, 2} - {\mathsf N}_{\dt, 2}  )  ) \Big)   
   - G_p^{(1)} .   
     \label{consistency-7-2}      
\end{eqnarray}
Again, the solution depends only on the exact solution $({\mathsf N}, {\mathsf P})$, with derivatives of various orders stay bounded. Of course, an application of the semi-implicit discretization to~\eqref{consistency-7-1}-\eqref{consistency-7-2} gives 
\begin{eqnarray} 
  \frac{{\mathsf N}_{\dt, 2}^{m+1} - {\mathsf N}_{\dt, 2}^m}{\dt}  &=& 
  \nabla \cdot \Big( 
     \breve{\mathsf N}_{\dt, 2}^{m+\hf} \nabla ( 
     G_{\hat{\mathsf N}_1^m}^1 ( \hat{\mathsf N}_1^{m+1} ) +    
  (-\Delta)^{-1} ( \hat{\mathsf N}_1^{m+\hf} 
  - \hat{\mathsf P}_1^{m+\hf}  )  )    \nonumber 
\\
  &&   
   +   ( \frac32 \hat{\mathsf N}_1^m - \frac12 \hat{\mathsf N}_1^{m-1} ) 
    \nabla (   
    \frac{1}{\hat{\mathsf N}_1^{m+\hf} } {\mathsf N}_{\dt, 2}^{m+\hf} 
    +    
  (-\Delta)^{-1} ( {\mathsf N}_{\dt, 2}^{m+\hf} - {\mathsf P}_{\dt, 2}^{m+\hf} )  ) \Big)   
  \nonumber 
\\
  &&
   - ( G_n^{(1)} )^{m+\hf} + O (\dt^2)   , 
     \label{consistency-8-1} 
\\
   \frac{{\mathsf P}_{\dt, 2}^{m+1} - {\mathsf P}_{\dt, 2}^m}{\dt}   &=& 
  \nabla \cdot \Big(   
       D \breve{\mathsf P}_{\dt, 2}^{m+\hf} \nabla ( 
     G_{\hat{\mathsf P}_1^m}^1 ( \hat{\mathsf P}_1^{m+1} ) +    
  (-\Delta)^{-1} ( \hat{\mathsf P}_1^{m+\hf} 
  - \hat{\mathsf N}_1^{m+\hf}  )  )    \nonumber 
\\
  &&   
   +   D ( \frac32 \hat{\mathsf P}_1^m - \frac12 \hat{\mathsf P}_1^{m-1} ) 
    \nabla (   
    \frac{1}{\hat{\mathsf P}_1^{m+\hf} } {\mathsf P}_{\dt, 2}^{m+\hf} 
    +    
  (-\Delta)^{-1} ( {\mathsf P}_{\dt, 2}^{m+\hf} - {\mathsf N}_{\dt, 2}^{m+\hf} )  ) \Big)   
  \nonumber 
\\
  &&
   - ( G_p^{(1)} )^{m+\hf} + O (\dt^2)  ,    
     \label{consistency-8-2}      
\\
  \breve{\mathsf N}_{\dt, 2}^{m+\hf} &=& 
  \frac32  {\mathsf N}_{\dt, 2}^{m} - \frac12 {\mathsf N}_{\dt, 2}^{m-1} , \quad 
  \breve{\mathsf P}_{\dt, 2}^{m+\hf} = 
  \frac32  {\mathsf P}_{\dt, 2}^{m} - \frac12 {\mathsf P}_{\dt, 2}^{m-1} .   
  \nonumber 
\end{eqnarray}
As a result, a combination of~\eqref{consistency-7-1}-\eqref{consistency-7-2} and \eqref{consistency-8-1}-\eqref{consistency-8-2} yields the fourth order temporal truncation error for $\hat{\mathsf N}_2 := \hat{\mathsf N}_1 + \dt^3 {\cal P}_N {\mathsf N}_{\dt, 2}$, $\hat{\mathsf P}_2 := \hat{\mathsf P}_1 + \dt^3 {\cal P}_N {\mathsf P}_{\dt, 2}$: 
\begin{eqnarray} 
  \frac{\hat{\mathsf N}_2^{m+1} - \hat{\mathsf N}_2^m}{\dt} &=& 
  \nabla \cdot \Big( 
    ( \frac32 \hat{\mathsf N}_2^m - \frac12 \hat{\mathsf N}_2^{m-1} ) 
     \nabla ( 
    G_{\hat{\mathsf N}_2^m}^1 ( \hat{\mathsf N}_2^{m+1} ) +    
  (-\Delta)^{-1} ( \hat{\mathsf N}_2^{m+\hf} 
  - \hat{\mathsf P}_2^{m+\hf}  )  )    \nonumber 
\\
  &&
   + \dt ( \ln \hat{\mathsf N}_2^{m+1} - \ln \hat{\mathsf N}_2^m ) \Big) 
   + \dt^4 ( G_n^{(2)} )^{m+\hf} + O(\dt^5) + O (h^{m_0}) , 
     \label{consistency-9-1} 
\\ 
   \frac{\hat{\mathsf P}_2^{m+1} - \hat{\mathsf P}_2^m}{\dt} &=& 
    \nabla \cdot \Big( 
    D ( \frac32 \hat{\mathsf P}_2^m - \frac12 \hat{\mathsf P}_2^{m-1} ) 
     \nabla ( 
    G_{\hat{\mathsf P}_2^m}^1 ( \hat{\mathsf P}_2^{m+1} ) +    
  (-\Delta)^{-1} ( \hat{\mathsf P}_2^{m+\hf} 
  - \hat{\mathsf N}_2^{m+\hf}  )  )    \nonumber 
\\
  &&
   + \dt ( \ln \hat{\mathsf P}_2^{m+1} - \ln \hat{\mathsf P}_2^m ) \Big)   
   + \dt^4 ( G_p^{(2)} )^{m+\hf} + O(\dt^5) + O (h^{m_0}) ,  
     \label{consistency-9-2} 
\end{eqnarray} 
in which similar linearized expansions (as in~\eqref{consistency-6}) have been used in the derivation. 

In terms of spatial discretization, we construct the spatial correction term $({\mathsf N}_{h, 1}, {\mathsf P}_{h, 1})$ to improve the spatial accuracy order. The following truncation error estimate for the spatial discretization is available, by using a straightforward Taylor expansion for the constructed profile $(\hat{\mathsf N}_2 , \hat{\mathsf P}_2)$: 
\begin{eqnarray} 
  \frac{\hat{\mathsf N}_2^{m+1} - \hat{\mathsf N}_2^m}{\dt} &=& 
    \nabla_h \cdot \Big( 
      ( \frac32 \hat{\mathsf N}_2^m - \frac12 \hat{\mathsf N}_2^{m-1} ) 
     \nabla_h ( 
    G_{\hat{\mathsf N}_2^m}^1 ( \hat{\mathsf N}_2^{m+1} ) +    
  (-\Delta_h)^{-1} ( \hat{\mathsf N}_2^{m+\hf} 
  - \hat{\mathsf P}_2^{m+\hf}  )  )   
     \nonumber 
\\
  &&
  + \dt ( \ln \hat{\mathsf N}_2^{m+1} - \ln \hat{\mathsf N}_2^m ) \Big)   
  + h^2 ( H_n^{(0)} )^{m+\hf} + O (\dt^4 + h^4) , 
     \label{consistency-10-1} 
\\ 
   \frac{\hat{\mathsf P}_2^{m+1} - \hat{\mathsf P}_2^m}{\dt} &=& 
    \nabla_h \cdot \Big( 
     D   ( \frac32 \hat{\mathsf P}_2^m - \frac12 \hat{\mathsf P}_2^{m-1} ) 
     \nabla_h ( 
    G_{\hat{\mathsf P}_2^m}^1 ( \hat{\mathsf P}_2^{m+1} ) +    
  (-\Delta_h)^{-1} ( \hat{\mathsf P}_2^{m+\hf} 
  - \hat{\mathsf N}_2^{m+\hf}  )  )     \nonumber 
\\
  &&
   + \dt ( \ln \hat{\mathsf P}_2^{m+1} - \ln \hat{\mathsf P}_2^m ) \Big)    
   + h^2 ( H_p^{(0)} )^{m+\hf} + O (\dt^4 + h^4) ,  
     \label{consistency-10-2} 
\end{eqnarray} 
in which the average operator is taken in a similar form as~\eqref{mob ave-1}. Similarly, the spatially discrete functions $H_n^{(0)}$, $H_p^{(0)}$ are smooth enough in the sense that their discrete derivatives are bounded. Because of the symmetry in the centered finite difference approximation, there is no $O (h^3)$ truncation error term. In turn, the spatial correction function $({\mathsf N}_{h, 1}, {\mathsf P}_{h, 1})$ is determined by solving the solution of the following linear PDE system: 
\begin{eqnarray} 
  \partial_t {\mathsf N}_{h, 1}  &=& 
  \nabla \cdot \Big( 
    {\mathsf N}_{h, 1} \nabla ( 
    \ln \check{\mathsf N}_2 +    
  (-\Delta)^{-1} ( \check{\mathsf N}_2 - \check{\mathsf P}_2 )  )    \nonumber 
\\
  &&  \quad 
   +   \check{\mathsf N}_2 \nabla (   
    \frac{1}{\check{\mathsf N}_2} {\mathsf N}_{h, 1} +    
  (-\Delta)^{-1} ( {\mathsf N}_{h, 1} - {\mathsf P}_{h, 1} )  ) \Big)   
   - H_n^{(0)}  , 
     \label{consistency-11-1} 
\\
    \partial_t {\mathsf P}_{h, 1} &=& 
  \nabla \cdot \Big( 
  D {\mathsf P}_{h, 1} \nabla ( 
    \ln \check{\mathsf P}_2 +    
  (-\Delta)^{-1} ( \check{\mathsf P}_2 - \check{\mathsf N}_2 )  )    \nonumber 
\\
  &&  \quad 
   + D \check{\mathsf P}_1 \nabla (   
    \frac{1}{\check{\mathsf P}_2} {\mathsf P}_{h, 1} +    
  (-\Delta)^{-1} ( {\mathsf P}_{h, 1} - {\mathsf N}_{h, 1}  )  ) \Big)   
   - H_p^{(0)} .   
     \label{consistency-11-2}      
\end{eqnarray}
Again, the solution depends only on the exact solution $({\mathsf N},  {\mathsf P})$, with the divided differences of various orders stay bounded. An application of a full discretization to \eqref{consistency-11-1}-\eqref{consistency-11-2} leads to 
\begin{eqnarray} 
  \frac{{\mathsf N}_{h, 1}^{m+1} - {\mathsf N}_{h, 1}^m}{\dt}  &=& 
  \nabla_h \cdot \Big( 
      ( \breve{\mathsf N}_{h, 1}^{m+\hf} ) \nabla_h ( 
    G_{\hat{\mathsf N}_2^m}^1 ( \hat{\mathsf N}_2^{m+1} ) +    
  (-\Delta_h)^{-1} ( \hat{\mathsf N}_2^{m+\hf} 
  - \hat{\mathsf P}_2^{m+\hf}  )  )    \nonumber 
\\
  &&  \, \,  
   +     ( \frac32 \hat{\mathsf N}_2^m - \frac12 \hat{\mathsf N}_2^{m-1} ) 
   \nabla_h (   
    \frac{1}{\hat{\mathsf N}_2^{m+\hf} } {\mathsf N}_{h, 1}^{m+\hf} 
    +    
  (-\Delta_h)^{-1} ( {\mathsf N}_{h, 1}^{m+\hf} - {\mathsf P}_{h, 1}^{m+\hf} )  ) \Big)    \nonumber 
\\
  &&  \quad 
   - ( H_n^{(0)} )^{m+\hf}  + O (\dt^2 + h^2) , 
     \label{consistency-12-1} 
\\
  \frac{{\mathsf P}_{h, 1}^{m+1} - {\mathsf P}_{h, 1}^m}{\dt}  &=& 
  \nabla_h \cdot \Big( 
  D   ( \breve{\mathsf P}_{h, 1}^{m+\hf} ) \nabla_h ( 
    G_{\hat{\mathsf P}_2^m}^1 ( \hat{\mathsf P}_2^{m+1} ) +    
  (-\Delta_h)^{-1} ( \hat{\mathsf P}_2^{m+\hf} 
  - \hat{\mathsf N}_2^{m+\hf}  )  )    \nonumber 
\\
  &&  \, \,  
   +  D   ( \frac32 \hat{\mathsf P}_2^m - \frac12 \hat{\mathsf P}_2^{m-1} ) 
   \nabla_h (   
    \frac{1}{\hat{\mathsf P}_2^{m+\hf} } {\mathsf P}_{h, 1}^{m+\hf} 
    +    
  (-\Delta_h)^{-1} ( {\mathsf P}_{h, 1}^{m+\hf} - {\mathsf P}_{h, 1}^{m+\hf} )  ) \Big)  \nonumber 
\\
  &&  \quad 
   - ( H_p^{(0)} )^{m+\hf} + O (\dt^2 + h^2 ) ,    
     \label{consistency-12-2}      
\\
  \breve{\mathsf N}_{h, 1}^{m+\hf} &=& 
  \frac32 {\mathsf N}_{h, 1}^m - \frac12 {\mathsf N}_{h, 1}^{m-1} , \quad 
   \breve{\mathsf P}_{h, 1}^{m+\hf} = 
  \frac32 {\mathsf P}_{h, 1}^m - \frac12 {\mathsf P}_{h, 1}^{m-1} . \nonumber   
\end{eqnarray}
Finally, a combination of~\eqref{consistency-11-1}-\eqref{consistency-11-2} and~\eqref{consistency-12-1}-\eqref{consistency-12-2} yields the higher order truncation error for $(\hat{\mathsf N}, \hat{\mathsf P})$, as given by~\eqref{consistency-13-1} -- \eqref{consistency-13-2}. Of course, the linear expansions have been extensively utilized.

Moreover, we see that trivial initial data ${\mathsf N}_{\dt, j} (\, \cdot \, , t=0), {\mathsf P}_{\dt, j} (\, \cdot \, , t=0) \equiv 0$ could be taken ($j=1, 2$) as in \eqref{consistency-3-1}-\eqref{consistency-3-2}, \eqref{consistency-7-1}-\eqref{consistency-7-2}, respectively, as well as $({\mathsf N}_{h, 1},  {\mathsf P}_{h, 1})$ in \eqref{consistency-11-1}-\eqref{consistency-11-2}. Consequently, using similar arguments as in~\eqref{mass conserv-1}-\eqref{mass conserv-2}, we arrive at the mass conservative identities~\eqref{consistency-14-1}, \eqref{consistency-14-2}. Notice that the first step of~\eqref{consistency-14-2} is based on the fact that $\hat{\mathsf N} \in {\cal B}^K$, and the second step comes from the mass conservative property of $\hat{\mathsf N}$ at the continuous level. And also, the mass conservative property of $(\hat{\mathsf N}, \hat{\mathsf P})$ is stated in~\eqref{consistency-14-2}, and we conclude that the local truncation error $\tau_n$, $\tau_p$ has a similar property, so that~\eqref{consistency-14-4} is proved.  

Based on the fact that the temporal and spatial correction functions $({\mathsf N}_{\dt, j}, {\mathsf P}_{\dt, j})$, $({\mathsf N}_{h, 1}, {\mathsf P}_{h, 1})$ are bounded, we recall the separation property~\eqref{assumption:separation} for the exact solution. In turn, a similar property~\eqref{assumption:separation-2} becomes available for the constructed profile $(\hat{\mathsf N}, \hat{\mathsf P})$, in which the projection estimate~\eqref{projection-est-0} has been recalled. Of course, $\dt$ and $h$ could be taken sufficiently small so that~\eqref{assumption:separation-2} is valid for a modified value $\epsilon_0^\star$, such as $\epsilon_0^\star  = \frac14 \epsilon_0$. 
 
Furthermore, we recall the fact that the correction functions stay bounded, in terms of both the spatial and temporal derivatives, since they only depend on $({\mathsf N}_N, {\mathsf P}_N)$ and the exact solution. Therefore, a discrete $W^{1,\infty}$ bound for $(\hat{\mathsf N}, \hat{\mathsf P})$ could be derived as in~\eqref{assumption:W1-infty bound}, as well as a bound~\eqref{assumption:temporal bound} for its discrete temporal derivative.  This completes the proof of Proposition~\ref{prop:consistency}.

\section{Proof of Lemma~\ref{lem: rough integral estimate}}  \label{app: rough integral estimate}

 For the nonlinear inner product associated with the artificial regularization, the following fact is observed: 
	\begin{eqnarray} 	
\begin{aligned} 
  & 
  \ln \hat{\mathsf N}^{m+1} - \ln n^{m+1}    
  =  \frac{1}{\zeta_n^{(m+1)}} \tilde{n}^{m+1} ,  \quad 
  \mbox{$\zeta_n^{(m+1)}$ between $\hat{\mathsf N}^{m+1}$ and $n^{m+1}$}  ,    
  \quad \mbox{(Taylor expansion)} ,    
\\
  &  
\langle \ln \hat{\mathsf N}^{m+1} - \ln n^{m+1} , \tilde{n}^{m+1} \rangle 
= \langle  \frac{1}{\zeta_n^{(m+1)}} \tilde{n}^{m+1} , 
  \tilde{n}^{m+1}  \rangle   \ge 0 ,  
\end{aligned} 
    \label{integral-rough-2} 
	\end{eqnarray}
in which the positivity-preserving property of $n^{m+1}$ and $\check{\mathsf N}^{m+1}$ has been applied. For the additional term in the artificial regularization part, we have the following estimates:   
\begin{align}  
  &
   \ln \hat{\mathsf N}^m - \ln n^m   
  =  \frac{1}{\zeta_n^{(m)}} \tilde{n}^m ,  \quad 
  \mbox{$\zeta_n^{(m)}$ between $\hat{\mathsf N}^m$ and $n^m$}  ,    
  \quad \mbox{(Taylor expansion)} , 
      \label{integral-rough-3-1}    
\\
  &
     | \frac{1}{\zeta_n^{(m)}} |  \le \max \Big( \frac{1}{\hat{\mathsf N}^m} ,  
     \frac{1}{n^m} \Big) \le \frac{2}{\epsilon_0^*} ,  \quad 
     \mbox{(by~\eqref{assumption:separation-2}, \eqref{assumption:separation-3}) } ,           
         \label{integral-rough-3-2}    
\\
  &
   | \ln \hat{\mathsf N}^m - \ln n^m  |  
  = |   \frac{1}{\zeta_n^{(m)}} | \cdot | \tilde{n}^m |   
  \le \frac{2}{\epsilon_0^*}   | \tilde{n}^m | ,   \quad 
   | \langle \ln \hat{\mathsf N}^m - \ln n^m  , \tilde{n}^{m+1} \rangle | 
  \le  \frac{2}{\epsilon_0^*}   | \langle \tilde{n}^m , \tilde{n}^{m+1} \rangle |  .   
  \label{integral-rough-3-3}        
\end{align}  
For the error term $\langle \tilde{n}^{m+1} , G_{\hat{\mathsf N}^m}^1 ( \hat{\mathsf N}^{m+1} ) 
    - G_{n^m}^1 ( n^{m+1} )  \rangle$, we begin with the following decomposition: 
\begin{equation} 
\begin{aligned} 
  G_{\hat{\mathsf N}^m}^1 ( \hat{\mathsf N}^{m+1} ) 
    - G_{n^m}^1 ( n^{m+1} ) = {\cal NLE}_{1}^m + {\cal NLE}_{2}^m ,  \quad 
  & {\cal NLE}_{1}^m = G_{n^m}^1 ( \hat{\mathsf N}^{m+1} ) 
   - G_{n^m}^1 ( n^{m+1} ) ,   
\\
  &  
  {\cal NLE}_{2}^m = G_{\hat{\mathsf N}^{m+1} }^1 ( \hat{\mathsf N}^m ) 
   - G_{ \hat{\mathsf N}^{m+1}  }^1 ( n^m ) ,  
\end{aligned} 
  \label{integral-rough-4}  
\end{equation} 
in which $G_a^1 (x)$ has been introduced in~\eqref{defi-G-1}. For the ${\cal NLE}_{2}^m$ error term, we apply the intermediate value theorem and obtain   
\begin{eqnarray} 
\begin{aligned} 
      {\cal NLE}_{2}^m =  G_{\hat{\mathsf N}^{m+1} }^1 ( \hat{\mathsf N}^m ) 
    - G_{ \hat{\mathsf N}^{m+1}  }^1 ( n^m )   
  = (   G_{\hat{\mathsf N}^{m+1} }^1 )' (\eta_n^{(m)} )  ( \hat{\mathsf N}^m - n^m ) ,   
   \, \,  \mbox{$\eta_n^{(m)}$ between $\hat{\mathsf N}^m$ and $n^m$} . 
\end{aligned} 
 \label{integral-rough-5-1}    
\end{eqnarray} 
Meanwhile, by property (3) in Lemma~\ref{lem: G property}, we conclude that 
\begin{eqnarray} 
      (   G_{\hat{\mathsf N}^{m+1} }^1 )' (\eta_n^{(m)} )  = \frac{1}{2 \xi_n^{(m)} } ,  \quad 
      \mbox{$\xi_n^{(m)}$ between $\hat{\mathsf N}^{m+1}$ and $\eta_n^{(m)}$}  .  
      \label{integral-rough-5-2}    
\end{eqnarray} 
By the combined fact that,  $\xi_n^{(m)}$ is between $\hat{\mathsf N}^{m+1}$ and $\eta_n^{(m)}$, $\eta$ is between $\hat{\mathsf N}^m$ and $n^m$, the following bound is available 
\begin{eqnarray} 
   \mbox{$\xi_n^{(m)}$ is between the values of $\hat{\mathsf N}^{m+1}$, $\hat{\mathsf N}^m$ and $n^m$}, \, \, \, 
     \Big| \frac{1}{\xi_n^{(m)}} \Big|  \le \max \Big( \frac{1}{\hat{\mathsf N}^{m+1} } ,  
      \frac{1}{\hat{\mathsf N}^m} ,  \frac{1}{n^m} \Big) \le \frac{2}{\epsilon_0^*} , 
         \label{integral-rough-5-3}    
\end{eqnarray}     
in which the phase separation properties~\eqref{assumption:separation-2}, \eqref{assumption:separation-3} have been recalled. Then we arrive at 
 \begin{eqnarray} 
   | {\cal NLE}_{2}^m | 
  = |  (   G_{\hat{\mathsf N}^{m+1} }^1 )' (\eta_n^{(m)} )  |  \cdot | \tilde{n}^m |    
  = \Big|   \frac{1}{2 \xi_n^{(m)} } \Big| \cdot | \tilde{n}^m |   
  \le \frac{1}{\epsilon_0^*}   | \tilde{n}^m | . 
  \label{integral-rough-5-4}    
\end{eqnarray}               
Based on this point-wise bound, the following inequality is available 
\begin{eqnarray} 
  \langle {\cal NLE}_{2}^m , \tilde{n}^{m+1} \rangle   
  \ge - \frac{1}{\epsilon_0^*}   | \langle \tilde{n}^m , \tilde{n}^{m+1} \rangle | 
  \ge  - \frac{h^3}{\epsilon_0^*}  \sum_{i,j,k}  | \tilde{n}^m_{i,j,k} | \cdot  
  | \tilde{n}^{m+1}_{i,j,k} | .    \label{integral-rough-5-5}     
\end{eqnarray}  
For the ${\cal NLE}_{1}^m$ error term, a similar nonlinear analysis could be performed:    
\begin{eqnarray} 
\begin{aligned} 
  & 
       {\cal NLE}_{1}^m = G_{n^m}^1 ( \hat{\mathsf N}^{m+1} ) 
    - G_{n^m}^1 ( n^{m+1} )     
   = (   G_{\hat{n^m} })' (\eta_n^{(m+1)} )  ( \hat{\mathsf N}^{m+1} - n^{m+1} ) ,   
\\
  &
      (   G_{n^m }^1 )' (\eta_n^{(m+1)} )  = \frac{1}{2 \xi_n^{(m+1)} } ,  \quad 
      \mbox{$\xi_n^{(m+1)}$ between $n^m$ and $\eta_n^{(m+1)}$}  ,   
\\
  & 
   \mbox{$\xi_n^{(m+1)}$ is between the values of $n^m$, $\hat{\mathsf N}^{m+1}$ and $n^{m+1}$}, \, \, \, 
     \frac{1}{\xi_n^{(m+1)}}  \ge \min \Big( \frac{1}{\hat{\mathsf N}^{m+1} } ,  
      \frac{1}{n^{m+1}} ,  \frac{1}{n^m} \Big) . 
\end{aligned}  
         \label{integral-rough-6-1}    
\end{eqnarray}   
This in turn leads to 
\begin{equation} 
    \langle {\cal NLE}_{1}^m , \tilde{n}^{m+1} \rangle    
    = \langle \frac{1}{2 \xi_n^{(m+1)} } , ( \tilde{n}^{m+1} )^2 \rangle   
    = \frac12 h^3 \sum_{i,j,k}  \frac{1}{ ( \xi_n^{(m+1)} )_{i,j,k} } 
    \cdot | \tilde{n}^{m+1}_{i,j,k} |^2 .  \label{integral-rough-6-2}    
\end{equation}        
Subsequently, a combination of~\eqref{integral-rough-5-5} and \eqref{integral-rough-6-2} yields 
\begin{eqnarray} 
    \langle \tilde{n}^{m+1} , G_{\hat{\mathsf N}^m}^1 ( \hat{\mathsf N}^{m+1} ) 
    - G_{n^m}^1 ( n^{m+1} )  \rangle  
  \ge   h^3 \sum_{i,j,k}  \Big( \frac{\frac12}{ ( \xi_n^{(m+1)} )_{i,j,k} } 
    \cdot | \tilde{n}^{m+1}_{i,j,k} |^2   
    -  ( \epsilon_0^*)^{-1}  | \tilde{n}^m_{i,j,k} | \cdot  
  | \tilde{n}^{m+1}_{i,j,k} | \Big)   .    \label{integral-rough-6-3}     
\end{eqnarray}  
Moreover, its combination with~\eqref{integral-rough-2}-\eqref{integral-rough-3-3} indicates that 
\begin{eqnarray} 
\begin{aligned} 
  & 
  \dt \langle \tilde{n}^{m+1} , \ln \hat{\mathsf N}^{m+1} - \ln n^{m+1}  
    - ( \ln \hat{\mathsf N}^m - \ln n^m ) \rangle     
  +  \langle \tilde{n}^{m+1} , G_{\hat{\mathsf N}^m}^1 ( \hat{\mathsf N}^{m+1} ) 
    - G_{n^m}^1 ( n^{m+1} )  \rangle   
\\
  \ge & 
    h^3 \sum_{i,j,k}  \Big( \frac{\frac12}{ ( \xi_n^{(m+1)} )_{i,j,k} } 
    \cdot | \tilde{n}^{m+1}_{i,j,k} |^2   
    -  \frac32 ( \epsilon_0^*)^{-1}  | \tilde{n}^m_{i,j,k} | \cdot  
  | \tilde{n}^{m+1}_{i,j,k} | \Big)  .   
\end{aligned}    
    \label{integral-rough-6-4} 
\end{eqnarray}  

At each fixed grid point $(i, j, k)$, if $(i, j, k)$ is not in $\Lambda_n$, i.e., $0 < n^{m+1}_{i,j,k} < 2C^* +1$, the following estimates are available: 
\begin{equation} 
\begin{aligned} 
  & 
  \frac{1}{ n^{m+1}_{i,j,k} } \ge \frac{1}{2 C^* +1} , \, \, 
  \frac{1}{\hat{\mathsf N}^{m+1}_{i,j,k} } \ge \frac{1}{C^*} , \, \, 
  \frac{1}{ n^m_{i,j,k} } \ge \frac{1}{C^* +1} ,  \quad 
  \mbox{(by~\eqref{assumption:W1-infty bound}, \eqref{a priori-5})} ,   
\\
  & \mbox{so that} \, \, \, 
  \frac{1}{(\xi_n^{(m+1)} )_{i,j,k} }  \ge \min \Big( \frac{1}{\hat{\mathsf N}^{m+1}_{i,j,k} } ,  
      \frac{1}{n^{m+1}_{i,j,k} } ,  \frac{1}{n^m_{i,j,k} } \Big)  
      \ge \frac{1}{2 C^* +1} .       
\end{aligned} 
  \label{integral-rough-7-1} 
\end{equation} 
In turn, the following inequality is valid for $0 < n^{m+1}_{i,j,k} < 2C^* +1$: 
\begin{eqnarray} 
\begin{aligned} 
 \frac{\frac12}{ ( \xi_n^{(m+1)} )_{i,j,k} } 
    \cdot | \tilde{n}^{m+1}_{i,j,k} |^2   
    -  \frac32 ( \epsilon_0^*)^{-1}  | \tilde{n}^m_{i,j,k} | \cdot  
  | \tilde{n}^{m+1}_{i,j,k} | 
  \ge  & \frac{\frac12}{2 C^* +1}  | \tilde{n}^{m+1}_{i,j,k} |^2   
    -  \frac32 ( \epsilon_0^*)^{-1}  | \tilde{n}^m_{i,j,k} | \cdot  
  | \tilde{n}^{m+1}_{i,j,k} |     
\\
  \ge & 
  \frac{1}{4 C^* +2}  | \tilde{n}^{m+1}_{i,j,k} |^2   
    - \frac{1}{8 C^* +4}  | \tilde{n}^{m+1}_{i,j,k} |^2   
\\
  &  
    -  \frac{9 (2 C^* +1)}{16} ( \epsilon_0^*)^{-2}  | \tilde{n}^m_{i,j,k} |^2   
\\
  \ge & 
   \frac{1}{8 C^* +4}  | \tilde{n}^{m+1}_{i,j,k} |^2    
    -  \frac{9 (2 C^* +1)}{16} ( \epsilon_0^*)^{-2}  | \tilde{n}^m_{i,j,k} |^2  .    
\end{aligned}    
    \label{integral-rough-7-2} 
\end{eqnarray}  

At each fixed grid point $(i, j, k)$, if $(i, j, k) \in \Lambda_n$, i.e., $n^{m+1}_{i,j,k} \ge 2C^* +1$, we see that $n^{m+1}_{i,j,k} > \max (  \hat{\mathsf N}^{m+1}_{i,j,k}  , n^m_{i,j,k} )$, so that 
\begin{equation} 
  \frac{1}{(\xi_n^{(m+1)} )_{i,j,k} }  \ge \min \Big( \frac{1}{\hat{\mathsf N}^{m+1}_{i,j,k} } ,  
      \frac{1}{n^{m+1}_{i,j,k} } ,  \frac{1}{n^m_{i,j,k} } \Big)  
      = \frac{1}{n^{m+1}_{i,j,k} }  .       
  \label{integral-rough-7-3} 
\end{equation} 
Meanwhile, since $n^{m+1}_{i,j,k} >  \hat{\mathsf N}^{m+1}_{i,j,k}$, we see that $\tilde{n}^{m+1}_{i,j,k} = \hat{\mathsf N}^{m+1}_{i,j,k} - n^{m+1}_{i,j,k} < 0$, and the following fact is observed: 
\begin{equation} 
\begin{aligned} 
  & 
  \hat{\mathsf N}^{m+1}_{i,j,k}   
  \le C^* \le  \frac{C^*}{2 C^* +1} ( 2 C^* +1) 
   \le \frac{C^* n^{m+1}_{i,j,k} }{2 C^* +1} , 
 \\
   & 
  | \tilde{n}^{m+1} | = | \hat{\mathsf N}^{m+1}_{i,j,k} - n^{m+1}_{i,j,k} | 
  \ge | n^{m+1}_{i,j,k} |  - \frac{C^* n^{m+1}_{i,j,k} }{2 C^* +1} 
  \ge \frac{C^* +1 }{2 C^* +1}  n^{m+1}_{i,j,k} , 
\\
  & 
  \frac{1}{(\xi_n^{(m+1)} )_{i,j,k} } \cdot | \tilde{n}^{m+1}_{i,j,k} |  
  \ge \frac{1}{n^{m+1}_{i,j,k} }  \cdot  \frac{C^* +1 }{2 C^* +1}  n^{m+1}_{i,j,k} 
  = \frac{C^* +1 }{2 C^* +1}  = \frac12 + \frac{\frac12}{2 C^* +1}  .  
\end{aligned}      
  \label{integral-rough-7-4} 
\end{equation}  
Subsequently, the following inequality could be derived, for $n^{m+1}_{i,j,k} \ge 2 C^* +1$: 
\begin{eqnarray} 
\begin{aligned} 
  & 
 \frac{\frac12}{ ( \xi_n^{(m+1)} )_{i,j,k} } 
    \cdot | \tilde{n}^{m+1}_{i,j,k} |^2   
    -  \frac32 ( \epsilon_0^*)^{-1}  | \tilde{n}^m_{i,j,k} | \cdot  
  | \tilde{n}^{m+1}_{i,j,k} |  
\\
  \ge  & \frac{\frac12 (C^* + 1) }{2 C^* +1} \cdot  | \tilde{n}^{m+1}_{i,j,k} |   
    -  \frac32 ( \epsilon_0^*)^{-1}  \cdot \dt^2 \cdot  
  | \tilde{n}^{m+1}_{i,j,k} |     
  \ge  
  \Big( \frac{\frac12 (C^* + 1) }{2 C^* +1}  -  \frac32 ( \epsilon_0^*)^{-1} \dt^2 \Big) 
   | \tilde{n}^{m+1}_{i,j,k} |     
\\
  \ge &  
    \frac14  | \tilde{n}^m_{i,j,k} |     
    \ge \frac14 ( 2 C^* +1) \ge \frac12 C^* ,    
\end{aligned}    
    \label{integral-rough-7-5} 
\end{eqnarray} 
in which the $\| \cdot \|_\infty$ estimate~\eqref{a priori-2} has been applied in the second step, and the fact the $\frac{\frac12 (C^* + 1) }{2 C^* +1}  -  \frac32 ( \epsilon_0^*)^{-1} \dt^2 \ge \frac14$ has been used in the fourth step.  

Consequently, a substitution of the point-wise inequalities~\eqref{integral-rough-7-2}, \eqref{integral-rough-7-5} into~\eqref{integral-rough-6-4} results in the desired estimate~\eqref{integral-rough-0}, by taking $\tilde{C}_4 = \frac{9 (2 C^* +1)}{16} ( \epsilon_0^*)^{-2}$. The other inequality in~\eqref{integral-rough-0} could be derived in the same manner; the technical details are skipped for the sake of brevity.  

Finally, if $K_n^* =0$ and $K_p^*=0$, i.e, both $\Lambda_n$ and $\Lambda_p$ are empty sets, we see that inequality~\eqref{integral-rough-7-2} is valid at every grid points. In turn, a summation in space results in the improved nonlinear estimate~\eqref{integral-rough-1}. This finishes the proof of Lemma~\ref{lem: rough integral estimate}.

\section{Proof of Lemma~\ref{prelim est-1}}  \label{app: prelim est-1}

The Taylor expansions~\eqref{integral-rough-5-1}-\eqref{integral-rough-5-3}, \eqref{integral-rough-6-1} are still valid, which in turn imply that  
\begin{equation}  
   G_{\hat{\mathsf N}^m}^1 ( \hat{\mathsf N}^{m+1} ) 
    - G_{n^m}^1 ( n^{m+1} )    
   = {\cal NLE}_1^m + {\cal NLE}_{2}^m 
  =  \frac{1}{2 \xi_n^{(m+1)} } \tilde{n}^{m+1}       
   + \frac{1}{2 \xi_n^{(m)} }  \tilde{n}^m .   
    \label{integral-refined-1-1}     
\end{equation} 
In particular, $\xi_n^{(m)}$ could be analyzed in a more precise way: 
\begin{equation} 
\begin{aligned} 
  & 
  \mbox{$\xi_n^{(m)}$ is between the values of $\hat{\mathsf N}^{m+1}$, $\hat{\mathsf N}^m$ and $n^m$, so that} 
\\
  & 
  \max ( | \xi_n^{(m)} - \hat{\mathsf N}^{m+1} | , 
  | \xi_n^{(m)} - \hat{\mathsf N}^m | ) 
  \le   | \hat{\mathsf N}^{m+1} - \hat{\mathsf N}^m |  
       + | n^m - \hat{\mathsf N}^m |    
       \le C^* \dt + \dt = (C^* +1 ) \dt , 
\end{aligned} 
     \label{integral-refined-1-2}    
\end{equation}       
in which the regularity requirement~\eqref{assumption:W1-infty bound} for the constructed profile $\hat{\mathsf N}$, as well as the $\| \cdot \|_\infty$ a-priori error estimate for $\tilde{n}^m$, has been applied. A similar error bound could also be derived for $\xi_n^{(m+1)}$, with the technical details skipped for the sake of brevity: 
\begin{equation} 
\begin{aligned} 
  \max ( | \xi_n^{(m+1)} - \hat{\mathsf N}^{m+1} | , 
  | \xi_n^{(m+1)} - \hat{\mathsf N}^m | ) 
  \le  & | \hat{\mathsf N}^{m+1} - \hat{\mathsf N}^m |  
       + | n^{m+1} - \hat{\mathsf N}^{m+1} | 
       + | n^m - \hat{\mathsf N}^m |       
\\
  \le &      
       C^* \dt + \dt = (C^* +1 ) \dt .  
\end{aligned} 
     \label{integral-refined-1-3}    
\end{equation}  
Then we arrive at the point-wise error bounds: 
\begin{equation} 
\begin{aligned} 
      \Big| \frac{1}{2 \xi_n^{(m)} } - \frac{1}{\hat{\mathsf N}^{m+1} + \hat{\mathsf N}^m }  \Big|   
      = & \frac{ | \hat{\mathsf N}^{m+1} - \xi_n^{(m)} + \hat{\mathsf N}^m - \xi_n^{(m)} | } 
      {   2 \xi_n^{(m)} (  \hat{\mathsf N}^{m+1} + \hat{\mathsf N}^m ) }   
\\  
      \le & 
      ( \epsilon_0^* )^{-1} \cdot 2 ( \epsilon_0^* )^{-1}  
      \cdot 2  (C^* +1 ) \dt  = M^{(0)} \dt ,  
\\
  \Big| \frac{1}{2 \xi_n^{(m+1)} } - \frac{1}{\hat{\mathsf N}^{m+1} + \hat{\mathsf N}^m }  \Big|   
      \le & 
       M^{(0)} \dt  ,  \, \, \, \mbox{(similar analysis)} ,       
\end{aligned}   
  \label{integral-refined-1-4}      
\end{equation} 
with $M^{(0)} = 4 ( \epsilon_0^* )^{-2} \cdot (C^* +1 )$, and the phase separation estimates~\eqref{assumption:separation-3}, \eqref{assumption:separation-4}, \eqref{assumption:separation-5} have been applied. As a further consequence, the following inequality could be derived 
\begin{equation} 
\begin{aligned} 
  & 
    \langle \tilde{n}^{m+1} - \tilde{n}^m , 
    G_{\hat{\mathsf N}^m}^1 ( \hat{\mathsf N}^{m+1} ) 
    - G_{n^m}^1 ( n^{m+1} )  \rangle 
  =  \langle \tilde{n}^{m+1} - \tilde{n}^m ,  
    \frac{1}{2 \xi_n^{(m+1)} } \tilde{n}^{m+1}       
   + \frac{1}{2 \xi_n^{(m)} }  \tilde{n}^m  \rangle 
\\
  \ge & 
  \langle \tilde{n}^{m+1} - \tilde{n}^m ,  
    \frac{1}{\hat{\mathsf N}^{m+1} + \hat{\mathsf N}^m }  
    (  \tilde{n}^{m+1}  +  \tilde{n}^m  ) \rangle  
    - \langle | \tilde{n}^{m+1} - \tilde{n}^m | ,  
    M^{(0)} \dt (  | \tilde{n}^{m+1} | + | \tilde{n}^m | ) \rangle    
\\
  \ge & 
  \Big\langle \frac{1}{\hat{\mathsf N}^{m+1} + \hat{\mathsf N}^m }  , 
   ( \tilde{n}^{m+1} )^2  \Big\rangle  
   - \Big\langle \frac{1}{\hat{\mathsf N}^{m+1} + \hat{\mathsf N}^m }  , 
   ( \tilde{n}^m )^2  \Big\rangle    
    - 2  M^{(0)} \dt (  \| \tilde{n}^{m+1} \|_2^2 
    +  \| \tilde{n}^m  \|_2^2 )  .        
\end{aligned} 
  \label{integral-refined-1-5}   
\end{equation} 

On the other hand, the following estimates are straightforward: 
\begin{equation} 
\begin{aligned} 
   \Big| \frac{1}{\hat{\mathsf N}^{m+1} + \hat{\mathsf N}^m } 
   - \frac{1}{2 \hat{\mathsf N}^{m+1} }  \Big| 
   = & \frac{ | \hat{\mathsf N}^{m+1} - \hat{\mathsf N}^m | } 
   {  2 \hat{\mathsf N}^{m+1} ( \hat{\mathsf N}^{m+1} + \hat{\mathsf N}^m ) }   
   \le C^* \dt \cdot 4 ( \epsilon_0^* )^{-2} 
   = M^{(1)} \dt , 
\\
   \Big| \frac{1}{\hat{\mathsf N}^{m+1} + \hat{\mathsf N}^m } 
   - \frac{1}{2 \hat{\mathsf N}^m }  \Big| 
   \le & M^{(1)} \dt , 
\end{aligned} 
  \label{integral-refined-2-1}   
\end{equation} 
with $M^{(1)} = 4 C^* ( \epsilon_0^* )^{-2}$, in which the temporal regularity assumptions~\eqref{assumption:temporal bound} for the constructed profile $\hat{\mathsf N}$, as well as the phase separation estimate~\eqref{assumption:separation-3}, have been applied. Subsequently, the following inequalities are obvious: 
\begin{equation} 
\begin{aligned} 
   \Big\langle \frac{1}{\hat{\mathsf N}^{m+1} + \hat{\mathsf N}^m }  , 
   ( \tilde{n}^{m+1} )^2  \Big\rangle   
   - \frac12 \Big\langle \frac{1}{\hat{\mathsf N}^{m+1} }  , 
   ( \tilde{n}^{m+1} )^2  \Big\rangle 
   \ge & - M^{(1)} \dt  \| \tilde{n}^{m+1} \|_2^2  , 
\\
  \Big\langle \frac{1}{\hat{\mathsf N}^{m+1} + \hat{\mathsf N}^m }  , 
   ( \tilde{n}^m )^2  \Big\rangle   
   - \frac12 \Big\langle \frac{1}{\hat{\mathsf N}^m }  , 
   ( \tilde{n}^m )^2  \Big\rangle 
   \le &  M^{(1)} \dt  \| \tilde{n}^m \|_2^2  .    
\end{aligned}  
  \label{integral-refined-2-2}  
\end{equation} 

Finally, a combination of~\eqref{integral-refined-1-5} and \eqref{integral-refined-2-2} results in the refined estimate~\eqref{integral-refined-0-1}, by taking $\tilde{C}_6 = 2 M^{(0)} + M^{(1)}$. Since both $M^{(0)}$ and $M^{(1)}$ only depend on $C^*$, $\epsilon_0^*$, the same dependence preserves for $\tilde{C}_6$.   

Inequalities~\eqref{integral-refined-0-2}, \eqref{integral-refined-0-3}, and \eqref{integral-refined-0-4} could be derived in a similar manner; the technical details are skipped for the sake of brevity.

	\bibliographystyle{plain}
	\bibliography{draft1.bib}

\begin{thebibliography}{10}

\bibitem{abels07}
H.~Abels and M.~Wilke.
\newblock Convergence to equilibrium for the {Cahn-Hilliard} equation with a
  logarithmic free energy.
\newblock {\em Nonlinear Anal.}, 67:3176--3193, 2007.

\bibitem{baskaran13a}
A.~Baskaran, Z.~Hu, J.~Lowengrub, C.~Wang, S.M. Wise, and P.~Zhou.
\newblock Energy stable and efficient finite-difference nonlinear multigrid
  schemes for the modified phase field crystal equation.
\newblock {\em J. Comput. Phys.}, 250:270--292, 2013.

\bibitem{baskaran13b}
A.~Baskaran, J.~Lowengrub, C.~Wang, and S.~Wise.
\newblock Convergence analysis of a second order convex splitting scheme for
  the modified phase field crystal equation.
\newblock {\em SIAM J. Numer. Anal.}, 51:2851--2873, 2013.

\bibitem{Bazant04}
M.Z. Bazant, K.~Thornton, and A.~Ajdari.
\newblock Diffuse-charge dynamics in electrochemical systems.
\newblock {\em Phys. Rev. E}, 70(2):021506, 2004.

\bibitem{Ben02}
Y.~Ben and H.C. Chang.
\newblock Nonlinear {Smoluchowski} slip velocity and micro-vortex generation.
\newblock {\em J. Fluid Mech.}, 461:229--238, 2002.

\bibitem{chen19a}
W.~Chen, W.~Feng, Y.~Liu, C.~Wang, and S.M. Wise.
\newblock A second order energy stable scheme for the {Cahn-Hilliard-Hele-Shaw}
  equation.
\newblock {\em Discrete Contin. Dyn. Syst. Ser. B}, 24(1):149--182, 2019.

\bibitem{chen14}
W.~Chen, C.~Wang, X.~Wang, and S.M. Wise.
\newblock A linear iteration algorithm for energy stable second order scheme
  for a thin film model without slope selection.
\newblock {\em J. Sci. Comput.}, 59:574--601, 2014.

\bibitem{chen19b}
W.~Chen, C.~Wang, X.~Wang, and S.M. Wise.
\newblock Positivity-preserving, energy stable numerical schemes for the
  {Cahn-Hilliard} equation with logarithmic potential.
\newblock {\em J. Comput. Phys.: X}, 3:100031, 2019.

\bibitem{cheng16a}
K.~Cheng, C.~Wang, S.M. Wise, and X.~Yue.
\newblock A second-order, weakly energy-stable pseudo-spectral scheme for the
  {Cahn-Hilliard} equation and its solution by the homogeneous linear iteration
  method.
\newblock {\em J. Sci. Comput.}, 69:1083--1114, 2016.

\bibitem{debussche95}
A.~Debussche and L.~Dettori.
\newblock On the {C}ahn-{H}illiard equation with a logarithmic free energy.
\newblock {\em Nonlinear Anal.}, 24:1491--1514, 1995.

\bibitem{diegel17}
A.~Diegel, C.~Wang, X.~Wang, and S.M. Wise.
\newblock Convergence analysis and error estimates for a second order accurate
  finite element method for the {Cahn-Hilliard-Navier-Stokes} system.
\newblock {\em Numer. Math.}, 137:495--534, 2017.

\bibitem{diegel16}
A.~Diegel, C.~Wang, and S.M. Wise.
\newblock Stability and convergence of a second order mixed finite element
  method for the {Cahn-Hilliard} equation.
\newblock {\em IMA J. Numer. Anal.}, 36:1867--1897, 2016.

\bibitem{ding19}
J.~Ding, C.~Wang, and S.~Zhou.
\newblock Optimal rate convergence analysis of a second order numerical scheme
  for the {Poisson-Nernst-Planck} system.
\newblock {\em Numer. Math. Theor. Meth. Appl.}, 12:607--626, 2019.

\bibitem{dong18a}
L.~Dong, W.~Feng, C.~Wang, S.M. Wise, and Z.~Zhang.
\newblock Convergence analysis and numerical implementation of a second order
  numerical scheme for the three-dimensional phase field crystal equation.
\newblock {\em Comput. Math. Appl.}, 75(6):1912--1928, 2018.

\bibitem{dong19b}
L.~Dong, C.~Wang, H.~Zhang, and Z.~Zhang.
\newblock A positivity-preserving, energy stable and convergent numerical
  scheme for the {Cahn-Hilliard} equation with a {Flory-Huggins-deGennes}
  energy.
\newblock {\em Commun. Math. Sci.}, 17:921--939, 2019.

\bibitem{dong20a}
L.~Dong, C.~Wang, H.~Zhang, and Z.~Zhang.
\newblock A positivity-preserving second-order {BDF} scheme for the
  {Cahn-Hilliard} equation with variable interfacial parameters.
\newblock {\em Commun. Comput. Phys.}, 28:967--998, 2020.

\bibitem{duan19c}
C.~Duan, C.~Liu, C.~Wang, and X.~Yue.
\newblock Convergence analysis of a numerical scheme for the porous medium
  equation by an energetic variational approach.
\newblock {\em Numer. Math. Theor. Meth. Appl.}, 13:1--18, 2020.

\bibitem{E95}
W.~E and J.-G. Liu.
\newblock Projection method {I}: {Convergence} and numerical boundary layers.
\newblock {\em SIAM J. Numer. Anal.}, 32:1017--1057, 1995.

\bibitem{E02}
W.~E and J.-G. Liu.
\newblock Projection method {III}. {Spatial} discretization on the staggered
  grid.
\newblock {\em Math. Comp.}, 71:27--47, 2002.

\bibitem{Eisenberg10}
B.~Eisenberg, Y.~Hyon, and C.~Liu.
\newblock Energy variational analysis of ions in water and channels: {Field}
  theory for primitive models of complex ionic fluids.
\newblock {\em J. Chem. Phys.}, 133(10):104104, 2010.

\bibitem{Eisenberg96}
R.S. Eisenberg.
\newblock Computing the field in proteins and channels.
\newblock {\em J. Mem. Biol.}, 150:1--25, 1996.

\bibitem{elliott96b}
C.M. Elliott and H.~Garcke.
\newblock On the {Cahn-Hilliard} equation with degenerate mobility.
\newblock {\em SIAM J. Math. Anal.}, 27:404--423, 1996.

\bibitem{Flavell14}
A.~Flavell, M.~Machen, R.~Eisenberg, J.~Kabre, C.~Liu, and X.~Li.
\newblock A conservative finite difference scheme for
  {P}oisson-{N}ernst-{P}lanck equations.
\newblock {\em J. Comput. Electron.}, 13:235--249, 2014.

\bibitem{Gavish16}
N.~Gavish and A.~Yochelis.
\newblock Theory of phase separation and polarization for pure ionic liquids.
\newblock {\em J. Phys. Chem. Lett.}, 7:1121--1126, 2016.

\bibitem{guan17a}
Z.~Guan, J.S. Lowengrub, and C.~Wang.
\newblock Convergence analysis for second order accurate schemes for the
  periodic nonlocal {Allen-Cahn} and {Cahn-Hilliard} equations.
\newblock {\em Math. Methods Appl. Sci.}, 40(18):6836--6863, 2017.

\bibitem{guan14a}
Z.~Guan, C.~Wang, and S.M. Wise.
\newblock A convergent convex splitting scheme for the periodic nonlocal
  {Cahn-Hilliard} equation.
\newblock {\em Numer. Math.}, 128:377--406, 2014.

\bibitem{guo16}
J.~Guo, C.~Wang, S.M. Wise, and X.~Yue.
\newblock An {$H^2$} convergence of a second-order convex-splitting, finite
  difference scheme for the three-dimensional {Cahn-Hilliard} equation.
\newblock {\em Commun. Math. Sci.}, 14:489--515, 2016.

\bibitem{han15}
D.~Han and X.~Wang.
\newblock A second order in time, uniquely solvable, unconditionally stable
  numerical scheme for {Cahn-Hilliard-Navier-Stokes} equation.
\newblock {\em J. Comput. Phys.}, 290:139--156, 2015.

\bibitem{HuHuang_NM20}
J.~Hu and X.~Huang.
\newblock A fully discrete positivity-preserving and energy-dissipative finite
  difference scheme for {Poisson--Nernst--Planck} equations.
\newblock {\em Numer. Math.}, 145:77--115, 2020.

\bibitem{hu09}
Z.~Hu, S.M. Wise, C.~Wang, and J.S. Lowengrub.
\newblock Stable and efficient finite-difference nonlinear-multigrid schemes
  for the phase-field crystal equation.
\newblock {\em J. Comput. Phys.}, 228:5323--5339, 2009.

\bibitem{Hunter01}
R.J. Hunter.
\newblock {\em Foundations of Colloid Science}.
\newblock Oxford University Press, Oxford, UK, 2001.

\bibitem{jerome95}
J.W. Jerome.
\newblock {\em Analysis of Charge Transport. Mathematical Theory and
  Approximation of Semi-conductor Models}.
\newblock Springer-Verlag, New York, 1995.

\bibitem{LiX2021}
X.~Li, Z.~Qiao, and C.~Wang.
\newblock Convergence analysis for a stabilized linear semi-implicit numerical
  scheme for the nonlocal {Cahn-Hilliard} equation.
\newblock {\em Math. Comp.}, 90:171--188, 2021.

\bibitem{LiuC2021}
C.~Liu, C.~Wang, S.~Wise, X.~Yue, and S.~Zhou.
\newblock A positivity-preserving, energy stable and convergent numerical
  scheme for the {Poisson-Nernst-Planck} system.
\newblock {\em Math. Comp.}, 90(331):2071--2106, 2021.

\bibitem{Lyklema95}
J.~Lyklema.
\newblock {\em Fundamentals of Interface and Colloid Science. Volume II:
  Solid-liquid Interfaces}.
\newblock Academic Press Limited, San Diego, CA, 1995.

\bibitem{Markowich86}
P.A. Markowich.
\newblock {\em The Stationary Seminconductor Device Equations}.
\newblock Springer-Verlag, Vienna, Austria, 1986.

\bibitem{Markowich90}
P.A. Markowich, C.A. Ringhofer, and C.~Schmeiser.
\newblock {\em Seminconductor Equations}.
\newblock Springer-Verlag, New York, 1990.

\bibitem{miranville04}
A.~Miranville and S.~Zelik.
\newblock Robust exponential attractors for {Cahn-Hilliard} type equations with
  singular potentials.
\newblock {\em Math. Methods Appl. Sci.}, 27:545--582, 2004.

\bibitem{Nazarov07}
I.~Nazarov and K.~Promislow.
\newblock The impact of membrane constraint on {PEM} fuel cell water
  management.
\newblock {\em J. Electrochem. Soc.}, 154(7):623--630, 2007.

\bibitem{Nonner99}
W.~Nonner, D.P. Chen, and B.~Eisenberg.
\newblock Progress and prospects in permeation.
\newblock {\em J. Gen. Physiol.}, 113:773--782, 1999.

\bibitem{promislow01}
K.~Promislow and J.M. Stockie.
\newblock Adiabatic relaxation of convective-diffusive gas transport in a
  porous fuel cell electrode.
\newblock {\em SIAM J. Appl. Math.}, 62(1):180--205, 2001.

\bibitem{STWW03}
R.~Samelson, R.~Temam, C.~Wang, and S.~Wang.
\newblock Surface pressure {Poisson} equation formulation of the primitive
  equations: {Numerical} schemes.
\newblock {\em SIAM J. Numer. Anal.}, 41:1163--1194, 2003.

\bibitem{STWW07}
R.~Samelson, R.~Temam, C.~Wang, and S.~Wang.
\newblock A fourth order numerical method for the planetary geostrophic
  equations with inviscid geostrophic balance.
\newblock {\em Numer. Math.}, 107:669--705, 2007.

\bibitem{shen12}
J.~Shen, C.~Wang, X.~Wang, and S.M. Wise.
\newblock Second-order convex splitting schemes for gradient flows with
  {Ehrlich-Schwoebel} type energy: Application to thin film epitaxy.
\newblock {\em SIAM J. Numer. Anal.}, 50:105--125, 2012.

\bibitem{Shen2021}
J.~Shen and J.~Xu.
\newblock Unconditionally positivity preserving and energy dissipative schemes
  for {Poisson-Nernst-Planck} equations.
\newblock {\em Numer. Math.}, 148:671--697, 2021.

\bibitem{WL00}
C.~Wang and J.-G. Liu.
\newblock Convergence of gauge method for incompressible flow.
\newblock {\em Math. Comp.}, 69:1385--1407, 2000.

\bibitem{WL02}
C.~Wang and J.-G. Liu.
\newblock Analysis of finite difference schemes for unsteady {Navier-Stokes}
  equations in vorticity formulation.
\newblock {\em Numer. Math.}, 91:543--576, 2002.

\bibitem{WLJ04}
C.~Wang, J.-G. Liu, and H.~Johnston.
\newblock Analysis of a fourth order finite difference method for
  incompressible {Boussinesq} equations.
\newblock {\em Numer. Math.}, 97:555--594, 2004.

\bibitem{wang11a}
C.~Wang and S.M. Wise.
\newblock An energy stable and convergent finite-difference scheme for the
  modified phase field crystal equation.
\newblock {\em SIAM J. Numer. Anal.}, 49:945--969, 2011.

\bibitem{WangL15}
L.~Wang, W.~Chen, and C.~Wang.
\newblock An energy-conserving second order numerical scheme for nonlinear
  hyperbolic equation with an exponential nonlinear term.
\newblock {\em J. Comput. Appl. Math.}, 280:347--366, 2015.

\bibitem{wise10}
S.M. Wise.
\newblock Unconditionally stable finite difference, nonlinear multigrid
  simulation of the {Cahn-Hilliard-Hele-Shaw} system of equations.
\newblock {\em J. Sci. Comput.}, 44:38--68, 2010.

\bibitem{wise09a}
S.M. Wise, C.~Wang, and J.~Lowengrub.
\newblock An energy stable and convergent finite-difference scheme for the
  phase field crystal equation.
\newblock {\em SIAM J. Numer. Anal.}, 47:2269--2288, 2009.

\end{thebibliography}

\end{document}